\documentclass[preprint]{elsarticle}
\usepackage[T1]{fontenc}
\setcounter{secnumdepth}{3}
\setcounter{tocdepth}{3}
\usepackage{color}
\usepackage{xcolor}
\usepackage{amsmath, amssymb, amsfonts}
\usepackage{psfrag}

\usepackage{subfig}
\usepackage{thmtools} 
\usepackage{thm-restate} 
\usepackage{hyperref}
\usepackage{amsthm}
\usepackage{bm}
\usepackage{mathtools}

\theoremstyle{definition}
\newtheorem{definition}{Definition}

\newtheorem{example}{Example}
\theoremstyle{plain}
\newtheorem{theorem}{Theorem}
\newtheorem{corollary}{Corollary}
\newtheorem{lemma}{Lemma}


\newcommand{\RFPairPartition}{\textsc{One-out-of-a-pair-partition}}

\newcommand{\ProblemName}{\textsc{RobT$\equiv$}}
\newcommand{\FirstProblem}{\textsc{RobMCF$\equiv$}}

\newcommand{\fett}[1]{\boldsymbol{#1}}

\DeclareMathOperator{\freehelp}{free}
\DeclareMathOperator{\fixhelp}{fix}
\newcommand{\free}{A^{\freehelp}}
\newcommand{\fix}{A^{\fixhelp}}

\newcommand{\sourceSPGraph}{o}
\newcommand{\sinkSPGraph}{q}

\definecolor{myred}{RGB}{255,0,0}
\definecolor{mygreen}{RGB}{84, 159,0}
\definecolor{myblue}{RGB}{0, 84, 159}
\definecolor{myorange}{RGB}{236, 111, 0}
\definecolor{mybrown}{RGB}{159,0,5}
\definecolor{rwth-llblue}{cmyk}{.25,.125,0,0}
\definecolor{rwth-blue}{cmyk}{1,.5,0,0}
\definecolor{rwth-lblue}{cmyk}{.5,.25,0,0}
\definecolor{rwth-red}{cmyk}{0,1,.5,0}
\definecolor{rwth-lred}{cmyk}{0,.5,.25,0}
\definecolor{Rwth-lred}{cmyk}{0,.5,.25,0}
\definecolor{rwth-green}{cmyk}{.5,0,1,0}
\definecolor{rwth-lgreen}{cmyk}{.25,0,.5,0}
\definecolor{rwth-orange}{cmyk}{0,.4,1,0}
\definecolor{rwth-blue}{RGB}{0,84,159}
\definecolor{rwth-lblue}{RGB}{142,186,226}
\definecolor{rwth-lgreen}{RGB}{189,205,0}
\definecolor{Rwth-lgreen}{RGB}{189,205,0}
\definecolor{rwth-green}{RGB}{87,171,39}
\definecolor{rwth-orange}{RGB}{246,168,0}
\definecolor{rwth-red}{RGB}{204,7,30}

\newcommand{\schwarzKnoten}[4]{\node[smarkKnoten, #4] #1 at #2 {};\node at #2 {#3};}

\usepackage{tikzscale}
\usepackage{graphicx,adjustbox}

\newenvironment{lyxlist}[1]
{\begin{list}{}
		{\settowidth{\labelwidth}{#1}
			\setlength{\leftmargin}{\labelwidth}
			\addtolength{\leftmargin}{\labelsep}
			}}
	{\end{list}}
\usepackage[Algorithm, section]{algorithm}
\usepackage[noend]{algpseudocode}

\makeatletter
\def\BState{\State\hskip-\ALG@thistlm}

\makeatother


%

\usepackage[colorinlistoftodos]{todonotes}

\usepackage{tikz}
\usetikzlibrary{shapes,arrows,backgrounds}
\usetikzlibrary{calc}
\usepackage{calc} 
\usetikzlibrary{decorations}
\usetikzlibrary{matrix}
\usetikzlibrary{positioning}
\usetikzlibrary{intersections}
\usetikzlibrary{snakes}
\usetikzlibrary{patterns}
\usetikzlibrary{trees,arrows}
\usetikzlibrary{arrows.meta,calc,decorations.markings,math,arrows.meta}
\usetikzlibrary{positioning}
\usepackage{stmaryrd} 
\usepackage{wasysym} 

\usepackage{arydshln}

\usepackage{scalefnt}

\usepackage{multirow}
\usepackage{adjustbox}

\makeatother
\usetikzlibrary{quotes,angles,babel,calc}

\usepackage{pgfplots}


\definecolor{Darkgreen}{RGB}{0,100,0}
\definecolor{Darkblue}{RGB}{25,25,200}
\definecolor{Darkred}{RGB}{100,2,10}
\definecolor{Brombeer}{RGB}{160,80,160}
\newcommand{\gruen}{green!70!black}
\newcommand{\mycyan}{cyan!50!black}
\definecolor{sand}{RGB}{255,165,79} 
\definecolor{olive}{RGB}{107,142,35} 
\definecolor{newgray}{RGB}{112,128,144} 
\definecolor{hellesbrombeer}{rgb}{0.8,0.4,0.8} 
\definecolor{aquamarine}{RGB}{102,205,170} 
\definecolor{SteelBlue}{RGB}{70,130,180} 
\definecolor{amber}{rgb}{1.0, 0.75, 0.0}
\definecolor{mustard}{rgb}{1.0, 0.86, 0.35}
\definecolor{blond}{rgb}{0.98, 0.94, 0.75}

\definecolor{yellowpoly}{RGB}{255,255,0}
\definecolor{greenpoly}{RGB}{50,205,50}
\definecolor{bluepoly}{RGB}{0,0,205}

\tikzset{knoten/.style={shape=circle, ultra thick, fill = white, minimum size = 0.5cm, draw}}
\tikzset{klknoten/.style={shape=circle,minimum size = 0.01, draw}}
\tikzset{mknoten/.style={shape=circle,minimum size = 0.01, draw}}
\tikzset{miniknoten/.style={shape=circle,minimum size = 0.001, draw}}
\tikzset{klknotenB/.style={klknoten, fill=black}}
\tikzset{knotenB/.style={knoten, fill=black}}
\tikzset{markKnoten/.style={knoten, draw=red}}
\tikzset{bmarkKnoten/.style={knoten, draw=blue}}
\tikzset{gmarkKnoten/.style={knoten, draw=green!70!black}}
\tikzset{unKante/.style={ultra thick}}
\tikzset{markunKante/.style={unKante, red}}
\tikzset{bmarkunKante/.style={unKante, blue}}
\tikzset{gmarkunKante/.style={unKante, green!70!black}}
\tikzset{vmarkunKante/.style={unKante, violet!70!white}}
\tikzset{graumarkunKante/.style={unKante, black!40!white}}
\tikzset{grknoten/.style={shape=circle, ultra thick, fill = white, minimum size = 0.85cm, draw}}

\tikzset{geKante/.style={ultra thick, ->}}
\tikzset{markgeKante/.style={geKante, red}}
\tikzset{bmarkgeKante/.style={geKante, blue}}
\tikzset{gmarkgeKante/.style={geKante, green!70!black}}
\tikzset{graumarkgeKante/.style={geKante, black!40!white}}
\tikzset{miniKante/.style={very thick, -{>[scale=0.7]}, shorten >=2pt, shorten <=2pt}}
\tikzset{klKante/.style={very thick, -{>[scale=0.9]}, shorten >=3.3pt, shorten <=3.3pt}}

\tikzset{dgeKante/.style={ultra thick, <->}}
\tikzset{markdgeKante/.style={dgeKante, red}}
\tikzset{bmarkdgeKante/.style={dgeKante, blue}}
\tikzset{gmarkdgeKante/.style={dgeKante, green}}

\tikzset{coledge/.style={yellow,line width=3pt}}

\newcommand{\miniKnoten}[1]{
	\draw[fill] (#1) circle [radius=0.07];
}

\newcommand{\Knoten}[4]{
\node[knoten, #4] #1 at #2 {};
\node at #2 {#3};
}

\newcommand{\rotKnoten}[4]{
\node[markKnoten, #4] #1 at #2 {};
\node at #2 {#3};
}

\newcommand{\blauKnoten}[4]{
\node[bmarkKnoten, #4] #1 at #2 {};
\node at #2 {#3};
}








\definecolor{mygray}{gray}{.50}

\tikzset{koordinate/.style={thick, ->, draw = black}}
\tikzset{RLines/.style={ultra thick, draw = black}}
\tikzset{RLiner/.style={RLines, draw = red}}
\tikzset{RLineb/.style={RLines, draw = blue}}
\tikzset{Objective/.style={ultra thick, draw = red}}
\tikzset{point/.style={shape=circle, ultra thick, fill = black, minimum size = 0.01cm, scale=0.5, draw = black}}
\tikzset{pointr/.style={point, fill = red, draw = red}}
\tikzset{pointb/.style={point, fill = blue, draw = blue}}
\tikzset{pointg/.style={point, fill = green, draw = green}}
\tikzset{pointgr/.style={point, fill = mygray, draw = mygray}}
\tikzset{markPoints/.style={fill = yellow}}
\tikzset{hilfspfeil/.style={->, thick}}
\tikzset{zeigepfeil/.style={->, >=stealth}}
\tikzset{
	hexagon/.style={signal,signal to=east and west}
}









\tikzset{Pivotelement/.style={red}}














\tikzset{Bin/.style={thin}}

\tikzset{Factory/.style={shape=rectangle, minimum size=12pt, inner sep=3pt, fill=black, draw, very thick}}
\tikzset{FactoryR/.style={Factory, draw=red, ultra thick}}
\tikzset{Customer/.style={shape=circle, minimum size=5pt, inner sep=3pt, fill=black, draw}}

\newcommand{\Job}[5]{
	\draw[ultra thick, fill=#3] #1 rectangle node[midway](#5){#4} #2 ;
}

\newcommand{\Kreuz}[4]{
	\begin{scope}[shift={#3}, scale=#4]
		\draw[line width=#1, #2] (0,0) -- (1,1.5);
		\draw[line width=#1, #2] (1,0) -- (0,1.5);
	\end{scope}
}


\definecolor{olive}{RGB}{107,142,35} 
\tikzstyle{decision} = [ultra thick, diamond, draw=rwthblue, fill=white, text width=5em, text badly centered, node distance=3cm, inner sep=0pt]
\tikzstyle{block} = [ultra thick, rectangle, draw=rwthblue, fill=white, text width=5em, text centered, minimum height=4em]
\tikzstyle{result} = [ultra thick,rectangle, draw=rwthblue, fill=white, text width=5em, text centered, rounded corners=5mm, minimum height=4em]
\tikzstyle{line} = [ultra thick, draw=rwthblue, -latex']
\tikzstyle{cloud} = [ultra thick,draw=rwthblue, ellipse,fill=white, node distance=3cm,
minimum height=2em]

\newcommand{\Ellipse}[4]{
	\draw[unKante,#4] (#1) ellipse (#2cm and #3cm);
}


\newcommand{\Pfeil}[3]{
	\begin{scope}[shift={#1}, scale=#2]
		\draw[unKante, #3 ] (1,-3) -- (2,-3) -- (2,-3.25) -- (2.5,-2.75) -- (2,-2.25) -- (2, -2.5) -- (1,-2.5) -- (1,-3) --cycle;
	\end{scope}
}



\newcommand{\Shape}[3]{
	\draw[ultra thick, #2, pattern color = #2, pattern = #3] plot [smooth cycle] coordinates {#1};
}

\newcommand{\FirstReview}[1] {\textcolor{black}{#1}}
\usepackage{lineno}
\modulolinenumbers[5]
\usepackage[onehalfspacing]{setspace}

\usepackage[
left=2cm,
right=2cm,
top=2.5cm,
bottom=2cm,
]{geometry}

\makeatletter
\let\@afterindenttrue\@afterindentfalse
\def\ps@pprintTitle{%
	\let\@oddhead\@empty
	\let\@evenhead\@empty
	\def\@oddfoot{}%
	\let\@evenfoot\@oddfoot}
\makeatother
\makeatletter
\usepackage{environ}
\makeatletter
\newsavebox{\measure@tikzpicture}
\NewEnviron{scaletikzpicturetowidth}[1]{%
	\def\tikz@width{#1}%
	\def\tikzscale{1}\begin{lrbox}{\measure@tikzpicture}%
		\BODY
	\end{lrbox}%
	\pgfmathparse{#1/\wd\measure@tikzpicture}%
	\edef\tikzscale{\pgfmathresult}%
	\BODY
}
\makeatother

\begin{document}
\begin{frontmatter}

\title{Robust transshipment problem under consistent flow constraints}

\author{Christina B\"using}
\author{Arie M.C.A. Koster}
\author{Sabrina Schmitz\corref{mycorrespondingauthor}}

\ead{schmitz@combi.rwth-aachen.de}
\cortext[mycorrespondingauthor]{Corresponding author}
\address{Combinatorial Optimization, RWTH Aachen University, Germany}

\begin{abstract}
	In this paper, we study robust transshipment under consistent flow constraints.
	We consider demand uncertainty represented by a finite set of scenarios and characterize a subset of arcs as so-called fixed arcs.
	In each scenario, we require an integral flow that satisfies the respective flow balance constraints.
	In addition, on each fixed arc, we require equal flow for all scenarios.
	The objective is to minimize the maximum cost occurring among all scenarios.
	
	We show that the problem is strongly $\mathcal{NP}$-hard on acyclic digraphs by a reduction from the $(3,B2)$-\textsc{Sat} problem.
	Furthermore, we prove that the problem is weakly $\mathcal{NP}$-hard on series-parallel digraphs by a reduction from a special case of the \textsc{Partition} problem. 
	If in addition the number of scenarios is constant, we observe the pseudo-polynomial-time solvability of the problem. 
	We provide polynomial-time algorithms for three special cases on series-parallel digraphs.
	Finally, we present a polynomial-time algorithm for pearl digraphs.
\end{abstract}

\begin{keyword}
Transshipment Problem \sep Minimum Cost Flow \sep Equal Flow Problem \sep Robust Flows \sep Demand Uncertainty \sep Series-Parallel Digraphs
\end{keyword}
\end{frontmatter}

\section{Introduction}
\label{Sec:Introduction}
In this paper, we consider the \textit{robust transshipment problem under consistent flow constraints} (\ProblemName{}). 
The problem is motivated by long-term decisions on transshipment that have to be made despite uncertainties in demand. 
For instance, in logistic applications the transshipment is often agreed in advance by long-term contracts with subcontractors. 
Once an agreement is signed, the transshipment needs to be performed even if demand fluctuates.  
A solution to the \ProblemName{} problem facilitates cost-efficient decision-making which is robust against demand uncertainty.

The \ProblemName{} problem is the uncapacitated version of the robust minimum cost flow problem under consistent flow constraints (\FirstProblem{}), introduced in our previous work~\cite{buesing2020robust}.
The uncapacitated version is of interest, even if logistic applications might have a limited transshipment capacity, as the limitations are often sufficiently large. 
Since the complexity results for the \FirstProblem{} problem rely on the arc capacities of the network, the question raises whether the problem is solvable in polynomial time if the capacity restrictions are neglected.
This is the case, for example, for the integral multi-commodity flow problem, which is $\mathcal{NP}$-hard in general but  solvable in polynomial time for uncapacitated networks~\cite{korte2012combinatorial}.

As in the transshipment problem~\cite{korte2012combinatorial}, we consider an uncapacitated network in the \ProblemName{} problem.
To represent demand uncertainty, we consider vertex balances for a finite number of scenarios.
Furthermore, we characterize a subset of arcs as so-called fixed arcs. 	
In each scenario, we require an integral flow that satisfies the respective flow balance constraints.
The flow on a fixed arc represents the transshipment, for example, by subcontractors. 
For this reason, on each fixed arc, we require equal flow for all scenarios to determine a transshipment that is robust to demand uncertainty. 
The objective is to minimize the maximum cost that may occur among all scenarios. 
We note that the integral requirement for the flow is necessary, even though the balances of all scenarios are integral, as Dantzig and Fulkerson’s Integral Flow Theorem~\cite{korte2012combinatorial} does not hold for the \ProblemName{} problem.   

The main contribution of this paper is summarized as follows. 
We prove that finding a feasible solution to the \ProblemName{} problem is strongly $\mathcal{NP}$-complete on acyclic digraphs, even if only two scenarios are considered that have the same unique source and unique sink.
On series-parallel \FirstReview{(SP)} digraphs, we provide the following results. 
First, we prove that the decision version of the \ProblemName{} problem is weakly $\mathcal{NP}$-complete, even if only two scenarios are considered that have the same unique source and single but different sinks.
Second, we observe on the basis of our previous work~\cite{buesing2020robust} the pseudo-polynomial-time solvability for the special case of a constant number of scenarios. 
Third, we propose a polynomial-time algorithm, also on the basis of our previous work~\cite{buesing2020robust}, for the special case that all scenarios have the same unique source and unique sink. 
Fourth, we present a polynomial-time algorithm for the special case of a unique source and parallel sinks. 
We derive the same result for the special case of parallel sources and a unique sink. 
Furthermore, we present that the algorithm is extendable for the special case of parallel sources and parallel sinks if there exists a path between each source and each sink. 
Finally, we present a polynomial-time algorithm for the special case of pearl digraphs, independent of the number of sources and sinks. 
Initial results are published in a preliminary version in the proceedings of INOC $2022$~\cite{busing2022complexity}.

The outline of this paper is as follows. 
In Section~\ref{Sec:RelatedWork}, we provide an overview of related work. 
In Section~\ref{Sec:Problemdef}, we define the problem and introduce notations. 
In Section~\ref{Sec:ComplexityAcyclic}, we analyze the complexity of the \ProblemName{} problem on acyclic digraphs.
In Section~\ref{Sec:SPDef}, we analyze the complexity of the \ProblemName{} problem on SP digraphs in general, on SP digraphs with a unique source and a unique sink, on SP digraphs with a unique source and parallel sinks, and on pearl digraphs. 
In Section~\ref{Sec:Conclusion}, we conclude our results.

\section{Related work}
\label{Sec:RelatedWork}
In the literature, a variety of logistic applications are represented by minimum cost flow (MCF) models. 
Adapted to specific applications, several extensions of the MCF problem are analyzed.
For instance, Seedig~\cite{seedig2011network} introduces the minimum cost flow with minimum quantities problem (MCFMQ). 
In addition to the requirements of the MCF problem, the flow on all outgoing arcs of the source must either take the value zero or a minimum quantity given. 
Krumke and Thielen~\cite{krumke2011minimum} generalize the MCFMQ problem such that the minimum quantity property holds for all arcs.  
Another extension of the MCF problem is, for example, the MCF version of the maximum flow problem with disjunctive constraints, studied by Pferschy and Schauer~\cite{pferschy2013maximum}. 
A maximum flow is sought whose flow is positive for at least one arc of every arc pair included in a predetermined arc set. 
We note that the concepts of these two problems are transferable to the concept of the \ProblemName{} problem. 
We could represent a minimum transshipment or at least one out of two transshipments performed by subcontractors. 
Due to the equal flow requirement of the \ProblemName{} problem, we focus in the following on literature with equal flow requirements. 

There are several extensions to the maximum flow (MF) and MCF problem which consider equal flow requirements on specified arc sets. 
For instance, the integral flow with homologous arcs problem (\textsc{homIF}), introduced by Sahni~\cite{sahni1974computationally}, aims at a maximum flow whose flow is equal on specified arcs. 
Sahni proves the $\mathcal{NP}$-hardness of the problem by a reduction from the \textsc{Non-Tautology} problem.
The MCF version of the \textsc{homIF} problem is known as the (integer) equal flow problem (\textsc{EF}).
By standard techniques, the complexity results of the \textsc{homIF} problem are transferred to the \textsc{EF} problem~\citep{ahuja1988network}. 
Meyers and Schulz~\cite{meyers2009integer} discuss the uncapacitated version of the \textsc{EF} problem. 
For instance, they prove the strong $\mathcal{NP}$-hardness of the problem by a reduction from the \textsc{Exact Cover by $3$-sets} problem, even in the case of a single source and sink. 
In addition, they show that there exists no $2^{n(1-\epsilon)}$-approximation algorithm for any fixed $\epsilon>0$ (on a digraph with $n$ vertices), even if a nontrivial solution is guaranteed to exist, unless $\mathcal{P}=\mathcal{NP}$.
There are further studies for both the \textsc{homIF} and \textsc{EF} problem in the literature~\citep{calvete2003network,meyers2009integer,morrison2013network,10.1007/3-540-45655-4_55}.  
Ali et al.~\cite{ali1988equal} investigate a special case of the \textsc{EF} problem where all sets have cardinality two. 
An integral MCF is sought whose flow is equal on a predetermined set of arc pairs.
The problem finds application in, for example, crew scheduling~\citep{carraresi1984network}.
Therefore, Ali et al.~present a heuristic algorithm based on Lagrangian relaxation.  
Meyers and Schulz~\cite{meyers2009integer} refer to this special case as paired integer equal flow problem (\textsc{pEFP}). 
They also consider the uncapacitated version of the \textsc{pEFP} problem and prove the strong $\mathcal{NP}$-hardness. 
Furthermore, they prove that there exists no $2^{n(1-\epsilon)}$-approximation algorithm for any fixed $\epsilon>0$ (on a digraph with $n$ vertices), unless $\mathcal{P}=\mathcal{NP}$. 
The statement holds true even if a nontrivial solution is guaranteed to exist.

Unlike the research referenced above, we do not consider demand and supply for only one scenario in the \ProblemName{} problem. 
Like in the \FirstProblem{} problem~\cite{buesing2020robust}, we consider several scenarios to represent demand uncertainty. 
We stress that the equal flow requirements are only of importance for more than one scenario.
The flow on a fixed arc has to be equal among all scenarios.
In turn, the flow on two different fixed arcs may differ in one scenario.
In our previous study~\cite{buesing2020robust}, we prove the $\mathcal{NP}$-hardness of the \FirstProblem{} problem on acyclic and SP digraphs.  
Furthermore, we present a polynomial-time algorithm for a special case on SP digraphs. 

To the best of our knowledge, equal flow requirements and demand uncertainty are combined in our previous study~\cite{buesing2020robust} for the first time.  
Demand uncertainty is frequently studied in the context of (uncapacitated) network design. 
Three examples are provided as follows.
Guti\'{e}rrez et al.~\cite{gutierrez1996robustness} present a robustness approach to uncapacitated network design problems.
To solve the problem they develop algorithms based on Benders decomposition methodology.
Lien et al.~\cite{lien2011efficient} provide an efficient and robust design for transshipment networks by chain configurations. 
Holmberg and Hellstrand~\cite{holmberg1998solving} concentrate on finding an optimal solution to the uncapacitated network design problem for commodities with a single source and sink by a Lagrangian heuristic within a branch-and-bound framework.

\section{Definition \& notations} 
\label{Sec:Problemdef}
The \ProblemName{} problem is the uncapacitated version of the \FirstProblem{} problem. 
We define the problem on the basis of our previous work~\cite{buesing2020robust}.
Let \textit{digraph} $G=(V,A)$ be given with vertex set $V$ and arc set $A$. 
The set of arcs $A$ is divided into two disjoint sets $\fix$ and $\free$, termed \textit{fixed} and \textit{free arcs}, respectively. 
\FirstReview{If not explicitly defined, }we specify the sets of vertices, arcs, fixed arcs, and free arcs of a digraph $G$ by $V(G)$, $A(G)$, $\fix(G)$, and $\free(G)$, respectively.  
Let arc \textit{cost} $c:A\rightarrow \mathbb{Z}_{\geq 0}$ be given.
The demand uncertainty is represented by the finite set of scenarios $\Lambda$. 
For every scenario $\lambda\in \Lambda$, vertex \textit{balances} $b^\lambda:V\rightarrow \mathbb{Z}$ with $\sum_{v\in V}b^\lambda(v)=0$ are given that define the supply and demand realizations, denoted by $\boldsymbol{b}=(b^1,\ldots,b^{|\Lambda|})$.
A vertex with a positive or negative balance is termed \textit{source} or \textit{sink}, respectively. 
In general, the source (sink) vertices do not necessarily have to be the same in all scenarios.
If there does not exist a path between each two sources (sinks), we say that the problem has \textit{parallel sources (sinks)}. 
If each scenario has only one vertex with a positive (negative) balance, we refer to this source (sink) as single source (sink). 
If the single sources (sinks) of all scenarios are defined by the same vertex, we say that the problem has a \textit{unique source (sink)}. 
Overall, we obtain the \textit{network} $(G=(V,A=\fix \cup \free),c,\boldsymbol{b})$. 

For a single scenario $\lambda\in \Lambda$, a \textit{$b^\lambda$-flow} in digraph $G$ is defined by a function $f^\lambda : A\rightarrow  \mathbb{Z}_{\geq 0}$ that satisfies the \textit{flow balance constraints}
\[
\sum_{a=(v,w)\in A}f^\lambda(a)-\sum_{a=(w,v)\in A}f^\lambda(a)=b^\lambda(v)
\]
at every vertex $v\in V$. 
The cost of a $b^\lambda$-flow $f^\lambda$ is defined by $$c(f^\lambda)=\sum_{a\in A} c(a)\cdot f^\lambda(a).$$
For the entire set of scenarios $\Lambda$, 
a \textit{robust $\boldsymbol{b}$-flow} $\boldsymbol{f}=(f^1,\ldots,f^{|\Lambda|})$ is defined by a $|\Lambda|$-tuple of integral $b^\lambda$-flows $f^\lambda: A\rightarrow  \mathbb{Z}_{\geq 0} $ that satisfy the \textit{consistent flow constraints} $f^\lambda(a)=f^{\lambda^\prime}(a)$ on all fixed arcs $a\in A^\text{fix}$ for all scenarios $\lambda, \lambda^\prime \in \Lambda$. 
The cost of a robust $\boldsymbol{b}$-flow $\boldsymbol{f}$ is defined by
$$c(\boldsymbol{f})=\max_{\lambda\in \Lambda}c(f^\lambda).$$
Finally, the \ProblemName{} problem is \FirstReview{defined} as follows.
\begin{definition}[\ProblemName{} problem]
	Given a network $(G=(V,A=A^{\text{fix}}\cup A^{\text{free}}),c,\boldsymbol{b})$, the \textit{robust transshipment problem under consistent flow constraints} aims at a robust $\boldsymbol{b}$-flow $\boldsymbol{f}=(f^1,\ldots, \allowbreak f^{|\Lambda|})$ of minimum cost.
\end{definition}
We note that in the case of a single scenario, i.e., $|\Lambda|=1$, the \ProblemName{} problem corresponds to the transshipment problem~\cite{korte2012combinatorial}.  
Analogous to the \FirstProblem{} problem, we stress that the Integral Flow Theorem of Dantzig and Fulkerson~\cite{korte2012combinatorial} does not hold for the \ProblemName{} problem. 
Although the balances are integral, in general, the solution of the continuous relaxation of the \ProblemName{} problem is not integral, as shown in the following example.
\begin{example}
	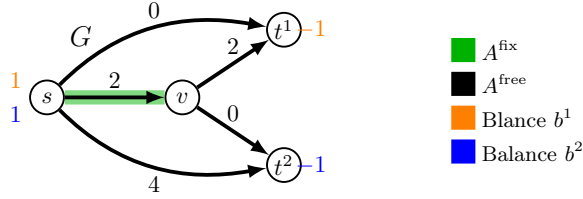
\begin{figure}
		\centering\scalebox{0.9}{\begin{tikzpicture}

	\Knoten{(1)}{(0,0)}{$s$}{thick}
	\Knoten{(2)}{(2,0)}{$v$}{thick}
	\Knoten{(3)}{(3.5,1)}{$t^1$}{thick}
	\Knoten{(4)}{(3.5,-1)}{$t^2$}{thick}
	\node at (0.5,0.9){\large$G$}; 

	\draw[unKante,line width=6,  opacity=0.5,\gruen] (1) -- (2);
	\draw[unKante, -latex] (1) -- node[above]{$2$}(2); 
	\draw[unKante, -latex] (2) -- node[above]{$2$}(3); 
	\draw[unKante, -latex] (2) -- node[above]{$0$}(4); 
	\draw[unKante, -latex] (1) to [out=45, in=170] node [above]{$0$} (3); 
	\draw[unKante, -latex] (1) to [out=-45, in=190] node[below]{$4$}  (4); 

	\node[orange, left] at ($(1)+(-0.25,0.25)$){$1$}; 
	\node[blue, left] at ($(1)-(0.25,0.25)$){$1$}; 
	\node[orange, left] at ($(3)+(0.75,0)$){$-1$}; 
	\node[blue, left] at ($(4)+(0.75,0)$){$-1$}; 

\begin{scope}[shift={(8.5,3.55)}]
	\begin{scope}[shift={(-2.5,-3)}]
		\Job{(0,0)}{(0.3,0.3)}{\gruen, \gruen}{}{}
		\node[right] at (0.3,0.15){\small$A^{\text{fix}}$}; 
	\end{scope}
	\begin{scope}[shift={(-2.5,-3.5)}]
		\Job{(0,0)}{(0.3,0.3)}{black, black}{}{}
		\node[right] at (0.3,0.15){\small$A^{\text{free}}$}; 
	\end{scope}
	\begin{scope}[shift={(-2.5,-4)}]
		\Job{(0,0)}{(0.3,0.3)}{orange, orange}{}{}
		\node[right] at (0.3,0.15){\small$\text{Blance } b^1$}; 
	\end{scope}
	\begin{scope}[shift={(-2.5,-4.5)}]
		\Job{(0,0)}{(0.3,0.3)}{blue, blue}{}{}
		\node[right] at (0.3,0.15){\small$\text{Balance } b^2$}; 
	\end{scope}
\end{scope}
\end{tikzpicture}}
		\caption{If the integral flow requirement is neglected, a non-integral robust $\boldsymbol{b}$-flow is the only optimal solution}\label{RFexample:nonIntegralityUncapacitated}
	\end{figure}
	For a set of two scenarios $\Lambda = \{1, 2\}$, let a \ProblemName{} instance $(G,c,\fett{b})$ be given. 
	Digraph $G$, the arc cost $c$, and the non-zero balances $\fett{b}$ are visualized in Figure~\ref{RFexample:nonIntegralityUncapacitated}. 
	An optimal integral robust $\boldsymbol{b}$-flow $\fett{f} = ( f^1, f^2)$ is given for instance as follows.
	The first scenario flow $f^1$ sends one unit along arc $(s,t^1)$ and the second scenario flow $f^2$ sends one unit along arc $(s,t^2)$. 
	The cost is $c(\fett{f}) = 4$ as $c( f^1) = 0$ and $c( f^2) = 4$ hold. 
	However, if the integral flow requirement is neglected, an optimal robust $\boldsymbol{b}$-flow $\fett{\tilde{f}}= ( \tilde{f}^1, \tilde{f}^2)$ is given as follows. 
	The first scenario flow $\tilde{f}^1$ sends half a unit each along arc $(s,t^1)$ and path $svt^1$. 
	The second scenario flow $\tilde{f}^2$ sends half a unit each along arc $(s,t^2)$ and path $svt^2$. 
	The cost is $c(\fett{\tilde{f}}) = 3$ as $c( \tilde{f}^1) = 0.5\cdot 0 + 0.5 \cdot (2+2) = 2$ and $c( \tilde{f}^2) = 0.5\cdot 4 + 0.5 \cdot (2+0) = 3$ hold. 
\end{example}
If integral flows were not required, the \ProblemName{} problem could be solved in polynomial time by a linear program. 
Before concluding this section, we note the following three results on the basis of our previous work~\cite{buesing2020robust}. 
First, if the values to be sent along fixed arcs are given, the \ProblemName{} problem can be solved in polynomial time by $|\Lambda|$ separate transshipment problems. 
Second, if the number of fixed arcs is constant, the \ProblemName{} problem can be solved in polynomial time by a mixed integer program with a corresponding constant number of integer variables. 
Third, if the \ProblemName{} problem has a unique source and a unique sink, the cost of an optimal robust flow is determined by the maximum of the cost of the scenarios with the maximum and minimum supply. 
For more details and proofs, we refer to our previous work~\cite{buesing2020robust}.

\section{Complexity for acyclic digraphs}	
\label{Sec:ComplexityAcyclic}
In this section, we analyze the complexity of the \ProblemName{} problem on acyclic digraphs. 
We perform a reduction from the strongly $\mathcal{NP}$-complete $(3,B2)$-\textsc{Sat} problem, introduced by Berman et al.~\citep{berman2004approximation}.
The $(3,B2)$-\textsc{Sat} problem is a special case of the $3$-\textsc{Sat} problem~\cite{johnson1979computers}, where every literal occurs exactly twice.
We use the notation $[n]:=\{1,\ldots,n\}$.
\begin{theorem}\label{Theorem:ReductionAcyclicGraphs2}
	Deciding whether or not a feasible solution exists to the \ProblemName{} problem on acyclic digraphs is strongly $\mathcal{NP}$-complete, even if only two scenarios are considered that have the same unique source and unique sink.
\end{theorem}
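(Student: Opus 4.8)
The plan is to reduce from the $(3,B2)$-\textsc{Sat} problem, so let an instance with variables $x_1,\dots,x_n$ and clauses $C_1,\dots,C_m$ be given, where each literal (each of $x_i$ and $\bar x_i$) occurs in exactly two clauses. Membership in $\mathcal{NP}$ is immediate: a robust $\bm{b}$-flow on an acyclic digraph has polynomial-size support and the consistent flow constraints and balance constraints are checkable in polynomial time. The substance is $\mathcal{NP}$-hardness, and the key idea is that with exactly two scenarios sharing a unique source $s$ and unique sink $t$, the consistent flow constraints on fixed arcs must encode a global Boolean assignment: the two scenario flows are forced to ``disagree'' on free arcs in a way that selects, for each variable, whether $x_i$ is true or false, while the fixed arcs force this selection to be consistent across all clause gadgets.

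First I would build, for each variable $x_i$, a small \emph{variable gadget}: a pair of parallel arcs (or short parallel paths) from a vertex $u_i$ to a vertex $w_i$, one representing $x_i=\text{true}$ and one representing $x_i=\text{false}$. One unit of flow must traverse the gadget in each scenario, but by routing the two scenarios through opposite branches and pinning the branch choice with \emph{fixed} arcs shared by the clause gadgets, the two scenarios are compelled to commit to complementary literals — this is where the equal-flow requirement does the combinatorial work, since a fixed arc carrying the ``$x_i$ true'' token must carry the same value in both scenarios. Next I would build, for each clause $C_j$, a \emph{clause gadget} that can route its required unit of flow only through an arc corresponding to one of its (at most three) satisfied literals; since each literal occurs exactly twice, each literal-arc is shared by at most two clause gadgets, keeping the capacities consistent (note we are in the uncapacitated setting, but the $\mathbb{Z}_{\ge 0}$ flow values and cost structure are what enforce the logic). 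The source $s$ feeds one unit into each variable gadget and each clause gadget, and the sink $t$ collects them; balances $b^1,b^2$ are chosen identically on all vertices except possibly an auxiliary pair, so that both scenarios have the same unique source $s$ and unique sink $t$. The cost function is set so that a satisfying assignment yields a robust $\bm{b}$-flow of some target cost (or, for the feasibility version stated in the theorem, simply so that a feasible robust $\bm{b}$-flow exists \emph{iff} the formula is satisfiable — the cleanest route, matching the theorem's phrasing ``deciding whether a feasible solution exists'').

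The two directions: if the formula is satisfiable, set each variable gadget's branch according to the assignment (identically in both scenarios, so the fixed arcs are automatically consistent), and route each clause unit through one of its satisfied literals; this gives a feasible robust $\bm{b}$-flow. Conversely, from any feasible robust $\bm{b}$-flow, the fixed-arc consistency forces each variable gadget into one of its two pure branch configurations (fractional or mixed configurations must be ruled out — here acyclicity plus the unit balances and integrality are what prevent splitting), reading off a Boolean assignment; feasibility of each clause gadget then certifies that each clause is satisfied. Acyclicity is maintained throughout by orienting all gadget arcs ``left to right'' from $s$ toward $t$ and layering variables before clauses. The main obstacle I expect is the \textbf{consistency-forcing step}: designing the fixed arcs and balances so that the \emph{only} way to satisfy the consistent flow constraints across all clause gadgets simultaneously is to fix a single global assignment — in particular, ruling out ``cheating'' solutions where a scenario pushes compensating flow along an alternate path to evade the intended branch choice. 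Getting the gadget arc set minimal enough that these escape routes do not exist, while keeping the digraph acyclic and both scenarios sharing the same unique source and sink, is the delicate part of the construction.
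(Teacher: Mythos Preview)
Your proposal correctly identifies the reduction source ($(3,B2)$-\textsc{Sat}) and the overall layered architecture, but the core mechanism you sketch does not work, and the mechanism the paper actually uses is absent from your plan.

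The central problem is your phrase ``routing the two scenarios through opposite branches and pinning the branch choice with fixed arcs.'' Consistent flow constraints force the two scenario flows to \emph{agree} on every fixed arc; there is no way to make scenario~1 take the $x_i$-branch and scenario~2 the $\bar x_i$-branch if those branches contain fixed arcs. Worse, if the balances are essentially identical in both scenarios (as you propose), then feasibility is trivial: for any ordinary $b^1$-flow $f$, the pair $(f,f)$ is a feasible robust $\bm b$-flow. So a feasible robust flow exists whenever an ordinary transshipment exists, independent of satisfiability. Your acknowledgment that ``ruling out cheating solutions'' is the main obstacle is correct, but the sketch gives no device to do it.

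The paper's mechanism is quite different and hinges on \emph{very different balance magnitudes}: scenario~1 sends exactly one unit from $s$ to the sink, scenario~2 sends $2n+2$ units. Because scenario~1 is integral and sends only one unit in an acyclic digraph, every fixed arc carries value in $\{0,1\}$ in both scenarios. The digraph is built so that (with two exceptions) every $s$--sink path must use at least two fixed arcs---one ``literal arc'' and one fixed arc on an auxiliary path $\tilde p$ of alternating fixed/free arcs. A counting argument then shows that pushing $2n+2$ units in scenario~2 requires $4n+2$ fixed arcs to be ``on,'' which is possible only if scenario~1's single unit traverses, for each variable $i$, exactly one of the two literal paths $p_i,\bar p_i$ (thereby choosing a truth value) and then all of $\tilde p$. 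Scenario~2 must route one unit through each clause vertex $u_j$, and the only incoming arcs at $u_j$ come from literal arcs; consistency forces it to use a literal arc that scenario~1 has switched on---so each clause contains a true literal. This asymmetric-supply-plus-counting idea is exactly the ``consistency-forcing step'' you flagged as delicate, and it is the whole proof; your proposal does not supply it.
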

\begin{proof}
	The \ProblemName{} problem is contained in $\mathcal{NP}$ as we can check in polynomial time whether the flow balance and consistent flow constraints are satisfied for every scenario. 
	Let $\{x_1,\ldots,x_n\}$ be the set of variables and $C_1,\ldots,C_m$ be the clauses of the $(3,B2)$-\textsc{Sat} instance $\mathcal{I}$. 
	For a set of two scenarios $\Lambda=\{1,2\}$, we construct a \ProblemName{} instance $\widetilde{\mathcal{I}}= (G, c, \boldsymbol{b})$. 
	Figure~\ref{ReductionSingleSourceSingleSinkWithoutCapacities} visualizes a \ProblemName{} instance corresponding to an example of a $(3,B2)$-\textsc{Sat} instance with four clauses and three variables. 
	\begin{figure*}[h]
		\begin{adjustbox}{max width=1.\textwidth}
			\input{Figures/ReductionSingleSourceSingleSinkWithoutCapacities}
		\end{adjustbox}
		\caption{Construction of \ProblemName{} instance $\widetilde{\mathcal{I}}$}
		\label{ReductionSingleSourceSingleSinkWithoutCapacities}
	\end{figure*}
	In general, the \ProblemName{} instance is based on a digraph $G=(V,A)$ defined as follows.
	The vertex set $V$ includes one vertex $v_i$ per variable $x_i$, $i\in[n]$, one dummy vertex $v_{n+1}$, and one vertex $u_j$ per clause $C_j$, $j\in[m]$. 
	For every literal $x_i$ ($\overline{x}_i$), $i\in[n]$, four auxiliary vertices $w^\ell_i$ ($\overline{w}^\ell_i$), $\ell\in[4]$ are included.
	Furthermore, set $V$ includes one auxiliary vertex $t$ and vertices $r_\ell$ for $\ell\in [2(2n+2)]$.
	Arc set $A$ includes arcs that connect two successive variable vertices $v_i$, $v_{i+1}$, $i\in[n]$ by two parallel paths $p_i$ and $\overline{p}_i$ defined along the auxiliary vertices, i.e., $p_i=v_i w^1_i w^2_i w^3_i w^4_i v_{i+1}$ and $\overline{p}_i=v_i\overline{w}^1_i \overline{w}^2_i \overline{w}^3_i \overline{w}^4_i v_{i+1}$ for $i\in[n]$. 
	Path $p_i$ represents the positive literal $x_i$ and path $\overline{p}_i$ the negative literal $\overline{x}_i$ of instance $\mathcal{I}$.
	As each literal occurs exactly twice, we identify two arcs of paths $p_i$ and $\overline{p}_i$ each with the literals.
	More precisely, let $x^k_i$ ($\overline{x}^k_i$) denote literal $x_i$ ($\overline{x}_i$), $i\in [n]$ which occurs the $k$-th time, $k\in [2]$ in the formula. 
	Arc $(w^{2k-1}_i,w^{2k}_i)$ ($(\overline{w}^{2k-1}_i,\overline{w}^{2k}_i)$) corresponds to literal $x^k_i$ ($\overline{x}^k_i$), $i\in [n]$, $k\in [2]$, referred to as literal arc. 
	Using this correspondence, we add arc $(w^{2k}_i, u_{j})$ ($(\overline{w}^{2k}_i, u_{j})$) for every literal $x^k_i$ ($\overline{x}^k_i$), $i\in[n]$, $k\in [2]$ included in clause $C_{j}$, $j\in[m]$. 
	In the next step, we create a path $\widetilde{p}$ from vertex $v_{n+1}=:r_0$ along vertices $r_\ell$, $\ell\in [2(2n+2)-1]$ to vertex $z:=r_{2(2n+2)}$. 
	Before introducing the last arcs included in arc set $A$, we identify all literal arcs and every second arc of path $\widetilde{p}$ as the only fixed arcs in the network, i.e., 
	\begin{align*}
		A^\text{fix}= \big\{(w^\ell_i ,w^{\ell+1}_i), \linebreak[3] (\overline{w}^\ell_i , \overline{w}^{\ell+1}_i) \mid \ell\in \{1,3\},\ i\in [n]\big\}
		\cup \big\{(r_\ell, r_{\ell+1})\mid \ell\in \{1,3,\ldots,2(2n+2)-1\}\big\}.
	\end{align*}
	We add arcs that connect vertex $v_1$ with every literal arc and every literal arc with auxiliary vertex $t$, i.e., $(v_1,w^\ell_i)$, $(v_1,\overline{w}^\ell_i)$ for $\ell\in\{1,3\}$ and $(w^\ell_i,t)$, $(\overline{w}^\ell_i,t)$ for $\ell\in\{2,4\}$.
	\FirstReview{The clause vertices are connected} with the first $m$ fixed arcs of path $\widetilde{p}$, i.e., $(u_j, r_{2j-1})$ for all $ j\in [m]$. 
	The auxiliary vertex $t$ is connected with the successive $2n-m$ fixed arcs of path $\widetilde{p}$ by $(t,r_{2\ell-1})$, $\ell\in \{m+1,\ldots,2n\}$.
	We add arcs $(v_1,w^4_n)$, $(v_1,\overline{w}^4_n)$, $(w^4_n, r_{4n+1})$, and $(\overline{w}^4_n, r_{4n+3})$.
	Finally, we connect all $2n+2$ fixed arcs of path $\widetilde{p}$ with vertex $z$, i.e., $(r_\ell,z)$ for all $\ell\in \{2,4,\ldots, 4n+2\}$. 
	We set the cost $c\equiv0$ and define the balances $\boldsymbol{b}=(b^1,b^2)$ by
	\begin{align*}
		b^1(v)
		= \left\{ \begin{array}{ll}
			1 			& \mbox{if $v=v_1$}, \\ 
			-1 			& \mbox{if $v=z$}, \\ 
			0 			& \mbox{otherwise},  \end{array} \right.
		& \ 
		b^2(v)= \left\{ \begin{array}{ll}
			2n+2 			& \mbox{if $v=v_1$}, \\ 
			-(2n+2) 	& \mbox{if $v=z$}, \\
			0 			& \mbox{otherwise}.  \end{array} \right.
	\end{align*}
	Vertices $v_1$ and $z$ specify the unique source and unique sink, respectively.
	Overall, we obtain a feasible \ProblemName{} instance $\widetilde{\mathcal{I}}= (G, c, \boldsymbol{b})$ which is constructed in polynomial time. Hence, it remains to show that $\mathcal{I}$ is a Yes-instance if and only if a feasible robust $\boldsymbol{b}$-flow exists for instance $\widetilde{\mathcal{I}}$.
	
	Let $x_1,\ldots,x_n$ be a satisfying truth assignment for instance $\mathcal{I}$. 
	We define the first scenario flow $f^1$ of instance $\widetilde{\mathcal{I}}$ as follows
	\begin{align*}
		f^1(a)=\left\{ \begin{array}{ll}
			1 &\text{for all } a\in A(p_i)	 \mbox{ if $x_i=\textsc{True}$}, \\
			1 &\text{for all } a\in A(\overline{p}_i)	 \mbox{ if $x_i=\textsc{False}$}, \\
			1 &\text{for all } a\in A(\widetilde{p}),\\
			0 &\mbox{otherwise}.  \end{array} \right.
	\end{align*}
	Flow $f^1$ uses either path $p_i$ or $\overline{p}_i$, $i\in [n]$ to send one unit from source $v_1$ to vertex $v_{n+1}$. 
	The unit is forwarded from vertex $v_{n+1}$ to sink $z$ along path $\widetilde{p}$.
	As $x_1,\ldots,x_n$ is a satisfying truth assignment, there exists one  designated verifying literal $x^k_i$ or $\overline{x}^k_i$, $i\in [n]$, $k\in [2]$ for each clause $C_j$, $j\in [m]$. 
	Using this, we define the first part of the second scenario flow $f^2$ as follows
	\begin{align*}
		f^2(a)=
		\begin{cases}
			1 		&\text{for all } a\in A(q^k_i) \text{ with }q^k_i=v_1w^{2k-1}_i w^{2k}_iu_{j}r_{2j-1} r_{2j}z 
			\mbox{ if $x^k_i\in C_{j}$ is designated as verifying literal}, \\
			1 		&\text{for all } a\in A(q^k_i) \text{ with }q^k_i=v_1\overline{w}^{2k-1}_i\overline{w}^{2k}_i u_{j}r_{2j-1} r_{2j}z
			\mbox{ if $\overline{x}^k_i\in C_{j}$ is designated as verifying literal},\\
			0 		&\mbox{otherwise}. 
		\end{cases}
	\end{align*}   
	Flow $f^2$ sends $m$ units from the source $v_1$ to the clause vertices $u_1,\ldots,u_m$ along the literal arcs corresponding to the designated verifying literals. 
	The $m$ units are forwarded along the subsequent fixed arcs to sink $z$. 
	Further, we define the second part of the second scenario flow $f^2$ that sends $2n-m$ units along the remaining literal arcs to vertex $t$. 
	The flow is forwarded to sink $z$ such that we set
	\begin{align*}
		f^2(a)=1 \mbox{ for all } 
		\begin{cases}
			a\in A(q^k_i) 	&\text{with }q^k_i=v_1w^{2k-1}_iw^{2k}_i t 
			\mbox{ if $x^k_i=\textsc{True}$ and $x^k_i$ is not designated as a verifying literal}, \\
			a\in A(q^k_i) 	&\text{with }q^k_i=v_1\overline{w}^{2k-1}_i\overline{w}^{2k}_i t	
			\mbox{ if $x^k_i=\textsc{False}$ and $x^k_i$ is not designated as a verifying literal},  \\
			a\in A(q_\ell) 	&\text{with }q_\ell=tr_{2\ell-1} r_{2\ell}z,\ell\in \{m+1,\ldots, 2n\}.
		\end{cases}
	\end{align*}
	Finally, one further unit is sent along path $v_1w^4_nr_{4n+1}r_{4n+2}z$ and one along path $v_1\overline{w}^4_nr_{4n+3}z$. 
	We have constructed a feasible robust $\boldsymbol{b}$-flow $\boldsymbol{f}=(f^1,f^2)$ for \ProblemName{} instance $\widetilde{\mathcal{I}}$. 
	
	Conversely, let $\boldsymbol{f}=(f^1,f^2)$ be a feasible robust $\boldsymbol{b}$-flow for \ProblemName{} instance $\widetilde{\mathcal{I}}$.
	Flows $f^1$ and $f^2$ send one and $2n+2$ units from source $v_1$ to sink $z$, respectively.
	By construction of the network, the only option to reach the sink requires the usage of at least two fixed arcs, namely one literal arc and one fixed arc of path $\widetilde{p}$ (except for the two paths $v_1w^4_nr_{4n+1}r_{4n+2}z$ and $v_1\overline{w}^4_nr_{4n+3}z$ which include each only one fixed arc of path $\widetilde{p}$). 
	Due to the integral flow $f^1$ which sends one unit within the acyclic digraph, it holds $f^1(a)= f^2(a)\in \{0,1\}$ for all fixed arcs $a\in A^\text{fix} $. 
	Consequently, flows $f^1$ and $f^2$ use at least $4n+2$ fixed arcs to meet the demand of flow $f^2$.
	To use the required $4n+2$ fixed arcs in the first scenario, flow $f^1$ sends the unit along either path $p_i$ or $\overline{p}_i$ for all $i\in[n]$ (but due to the integral requirement and the acyclic construction never both simultaneously) and subsequently along path $\widetilde{p}$. 
	If flow $f^1$ sends the unit along path $p_i$, $i\in[n]$, we set $x_i=\textsc{True}$. 
	If flow $f^1$ sends the unit along path $\overline{p}_i$, $i\in[n]$, we set $x_i=\textsc{False}$.  
	To use the required $4n+2$ fixed arcs in the second scenario, flow $f^2$ sends one unit via each clause vertex and $2n-m$ units via vertex $t$. 
	Depending on the first scenario flow, flow $f^2$ sends one unit along either path $v_1w^\ell_iw^{\ell+1}_iu_{j}r_{2j-1}r_{2j}z$ or $v_1\overline{w}^\ell_i\overline{w}^{\ell+1}_iu_{j}r_{2j-1}r_{2j}z$, $\ell\in \{1,3\}$, $i\in [n]$ for all $j\in[m]$ (but never both simultaneously) due to the consistent flow constraints.
	In the former case, clause $C_{j}$ is verified due to the previous assignment $x_i=\textsc{True}$ induced by flow $f^1$ and the fact that $x_i\in C_{j}$ holds. 
	In the latter case, clause $C_{j}$ is verified due to the previous assignment $x_i=\textsc{False}$ induced by flow $f^1$ and the fact that $\overline{x}_i\in C_{j}$ holds. 
	Two extra units are sent along paths $v_1w^4_nr_{4n+1}r_{4n+2}z$ and $v_1\overline{w}^4_nr_{4n+3}z$ which use the last two fixed arcs of path $\widetilde{p}$. 
	The two extra units sent are needed, otherwise there might exist a feasible robust flow whose second scenario flow sends a unit along path $v_1w^3_nw^4_nv_{n+1}r_1r_2z$ or $v_1\overline{w}^3_n\overline{w}^4_nv_{n+1}r_1r_2z$ which in turn allows one unsatisfied clause. 
	Overall, $x_1,\ldots,x_n$ is a satisfying truth assignment for \mbox{instance $\mathcal{I}$}.
\end{proof}
	Before concluding this section, we note the following. 
	In our previous work~\cite{buesing2020robust}, we use a similar construction to prove the strong $\mathcal{NP}$-completeness for the \FirstProblem{} problem on acyclic digraphs. 
	However, the result is not transferable without adjustments, as the arc capacities are indispensable for the construction of the reduction.  
	In the \FirstProblem{} problem, we control that one flow unit is sent via every clause vertex by means of arc capacities.
	In the \ProblemName{} problem, we guarantee this by means of successive fixed arcs included in an additional integrated path $\widetilde{p}$ in combination with a scenario in which only one unit is sent. 
\section{Complexity for SP digraphs}	
\label{Sec:SPDef}
In this section, we analyze the complexity of the \ProblemName{} problem on SP digraphs.
In Section~\ref{Subsec:SP-MultipleSS}, we prove that the problem is in general weakly $\mathcal{NP}$-com\-plete.
In Section~\ref{Subsubsec:UniqueSourceUnqiueSink}, we provide a polynomial-time algorithm for the special case of networks with a unique source and a unique sink.
In Section~\ref{Subsec:UniqueSourceParallelSinks}, we provide a polynomial-time algorithm for the special case of networks with a unique source and parallel sinks or parallel sources and a unique sink.
In Section~\ref{Subsubsec:PearlDigraphs}, we present a polynomial-time algorithm for the special case of pearl digraphs.

Based on the edge SP multi-graphs definition of Valdes et al.~\cite{valdes1982recognition}, we define SP digraphs as follows.
\begin{definition}[SP digraph]\label{Def:SeriesParallelGraphs}
An \textit{SP digraph} is recursively defined as follows.
\begin{itemize}
\item[1.] An arc $(\sourceSPGraph,\sinkSPGraph)$ is an SP digraph with \textit{origin} $\sourceSPGraph$ and \textit{target} $\sinkSPGraph$.
\item[2.] Let $G_1$ with origin $\sourceSPGraph_1$ and target $\sinkSPGraph_1$ and $G_2$ with origin $\sourceSPGraph_2$ and target $\sinkSPGraph_2$ be SP digraphs. 
The digraph that is constructed by one of the following two compositions of SP digraphs $G_1$ and $G_2$ is itself an SP digraph.
\begin{itemize}
\item[a)] The \textit{series composition} $G$ of two SP digraphs $G_1$ and $G_2$ is the digraph obtained by contracting target $\sinkSPGraph_1$ and origin $\sourceSPGraph_2$. The origin of digraph $G$ is then $\sourceSPGraph_1$ (becoming $\sourceSPGraph$) and the target is $\sinkSPGraph_2$ (becoming $\sinkSPGraph$).
\item[b)] The \textit{parallel composition} $G$ of two SP digraphs $G_1$ and $G_2$ is the digraph obtained by contracting origins $\sourceSPGraph_1$ and $\sourceSPGraph_2$ (becoming $\sourceSPGraph$) and contracting targets $\sinkSPGraph_1$ and $\sinkSPGraph_2$ (becoming $\sinkSPGraph$). The origin of digraph $G$ is $\sourceSPGraph$, and the target is $\sinkSPGraph$.
\end{itemize}
\end{itemize}
\end{definition}
The series and parallel compositions are illustrated in Figure~\ref{fig:Spgraphs}. 
In general, SP digraphs are multi-digraphs with one definite origin and one definite target. 
In the following, we denote the origin and target of an SP digraph $G$ by $o_G$ and $q_G$, respectively. 
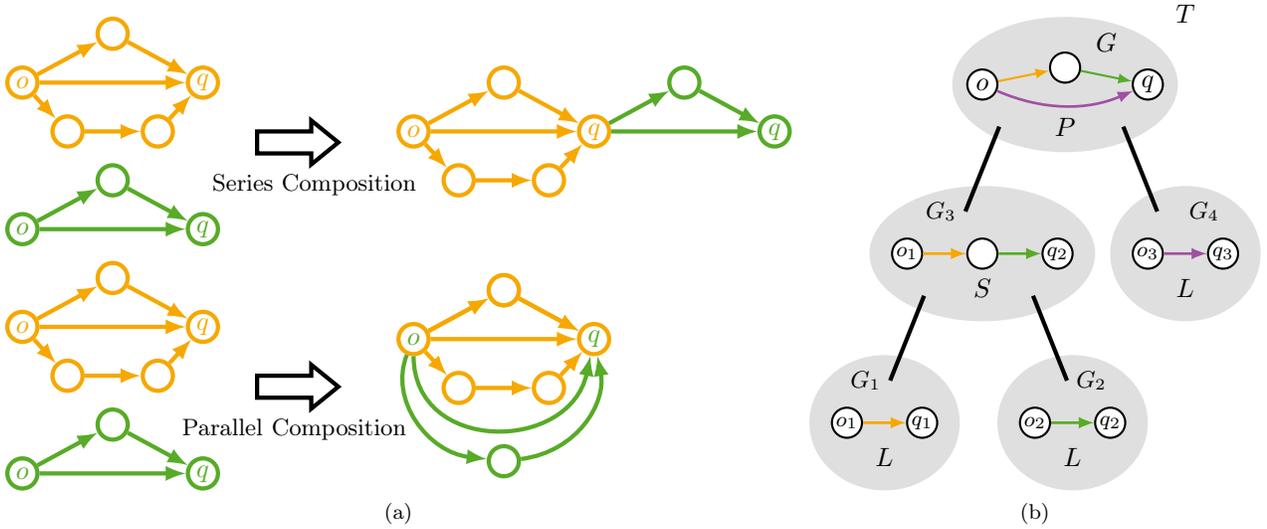
\begin{figure*}
	\begin{adjustbox}{max width=1\textwidth, 
		}
		\subfloat[]{\label{fig:Spgraphs}	\begin{tikzpicture}[xscale=0.8, yscale=0.65, bmarkKnoten/.style={knoten, draw=rwth-blue}, smarkKnoten/.style={knoten, draw=black},markKnoten/.style={knoten, draw=rwth-lred},knoten/.style={shape=circle, ultra thick, fill = white, minimum size = 0.5cm, draw},geKante/.style={ultra thick, -latex}]

\begin{scope}[shift={((0,-10))} ]	
\rotKnoten{(p11)}{(0,0)}{\textcolor{rwth-orange}{$\sourceSPGraph$}}{minimum size=0.4cm, draw=rwth-orange}
\rotKnoten{(p12)}{(0.75,-1)}{}{minimum size=0.4cm, draw=rwth-orange}
\rotKnoten{(p13)}{(2.25,-1)}{}{minimum size=0.4cm, draw=rwth-orange}
\rotKnoten{(p14)}{(1.5,1)}{}{minimum size=0.4cm, draw=rwth-orange}
\rotKnoten{(p15)}{(3,0)}{\textcolor{rwth-orange}{$\sinkSPGraph$}}{minimum size=0.4cm, draw=rwth-orange}

\begin{scope}[shift={((0,-3))} ]
	\blauKnoten{(p21)}{(0,0)}{\textcolor{rwth-green}{$\sourceSPGraph$}}{minimum size=0.4cm, draw=rwth-green}
	\blauKnoten{(p22)}{(1.5,1)}{}{minimum size=0.4cm, draw=rwth-green}
	\blauKnoten{(p23)}{(3,0)}{\textcolor{rwth-green}{$\sinkSPGraph$}}{minimum size=0.4cm, draw=rwth-green}
\end{scope}

\draw[geKante, rwth-orange] (p11) to (p12);
\draw[geKante, rwth-orange] (p11) to (p14);
\draw[geKante, rwth-orange] (p12) to (p13);
\draw[geKante, rwth-orange] (p13) to (p15);
\draw[geKante, rwth-orange] (p14) to (p15);
\draw[geKante, rwth-orange] (p11) to (p15);

\draw[geKante, rwth-green] (p21) to (p22);
\draw[geKante, rwth-green] (p21) to (p23);
\draw[geKante, rwth-green] (p22) to (p23);

\Pfeil{(3,1.25)}{0.9}{}
\node[right] at (2.5,-2.1){\small Parallel Composition} ;

\begin{scope}[shift={(6.5,-0.25)}]
	\rotKnoten{(p31)}{(0,0)}{\textcolor{rwth-green}{$\sourceSPGraph$}}{minimum size=0.4cm, draw=rwth-orange}
	\rotKnoten{(p32)}{(0.75,-1)}{}{minimum size=0.4cm, draw=rwth-orange}
	\rotKnoten{(p33)}{(2.25,-1)}{}{minimum size=0.4cm, draw=rwth-orange}
	\rotKnoten{(p34)}{(1.5,1)}{}{minimum size=0.4cm, draw=rwth-orange}
	\rotKnoten{(p35)}{(3,0)}{\textcolor{rwth-green}{$\sinkSPGraph$}}{minimum size=0.4cm, draw=rwth-orange}
	\rotKnoten{(p36)}{(1.5,1)}{}{minimum size=0.4cm, draw=rwth-orange}
	\blauKnoten{(p37)}{(1.5,-2.5)}{}{minimum size=0.4cm, draw=rwth-green}
\end{scope}

\draw[geKante, rwth-orange] (p31) to (p32);
\draw[geKante, rwth-orange] (p31) to (p34);
\draw[geKante, rwth-orange] (p32) to (p33);
\draw[geKante, rwth-orange] (p33) to (p35);
\draw[geKante, rwth-orange] (p34) to (p35);
\draw[geKante, rwth-orange] (p31) --(p35);
\draw[geKante, rwth-green] (p31) to [bend right, in=260, out=-90, looseness=1.75](p35);

\draw[geKante, rwth-orange] (p31) to (p36);
\draw[geKante, rwth-orange] (p36) to (p35);

\draw[geKante, rwth-green] (p31) to [bend right, in=230, out=-50] (p37);
\draw[geKante, rwth-green] (p37) to [bend right, in=220, out=-50]  (p35);
\end{scope}

\begin{scope}[shift={(0,-5)}]
\rotKnoten{(p11)}{(0,0)}{\textcolor{rwth-orange}{$\sourceSPGraph$}}{minimum size=0.4cm, draw=rwth-orange}
\rotKnoten{(p12)}{(0.75,-1)}{}{minimum size=0.4cm, draw=rwth-orange}
\rotKnoten{(p13)}{(2.25,-1)}{}{minimum size=0.4cm, draw=rwth-orange}
\rotKnoten{(p14)}{(1.5,1)}{}{minimum size=0.4cm, draw=rwth-orange}
\rotKnoten{(p15)}{(3,0)}{\textcolor{rwth-orange}{$\sinkSPGraph$}}{minimum size=0.4cm, draw=rwth-orange}

\begin{scope}[shift={((0,-3))} ]
	\blauKnoten{(p21)}{(0,0)}{\textcolor{rwth-green}{$\sourceSPGraph$}}{minimum size=0.4cm, draw=rwth-green}
	\blauKnoten{(p22)}{(1.5,1)}{}{minimum size=0.4cm, draw=rwth-green}
	\blauKnoten{(p23)}{(3,0)}{\textcolor{rwth-green}{$\sinkSPGraph$}}{minimum size=0.4cm, draw=rwth-green}
\end{scope}

\draw[geKante, rwth-orange] (p11) to (p12);
\draw[geKante, rwth-orange] (p11) to (p14);
\draw[geKante, rwth-orange] (p12) to (p13);
\draw[geKante, rwth-orange] (p13) to (p15);
\draw[geKante, rwth-orange] (p14) to (p15);
\draw[geKante, rwth-orange] (p11) to (p15);
%
\draw[geKante, rwth-green] (p21) to (p22);
\draw[geKante, rwth-green] (p21) to (p23);
\draw[geKante, rwth-green] (p22) to (p23);

\begin{scope}[shift={((6.5,-1))} ]
\rotKnoten{(s31)}{(0,0)}{\textcolor{rwth-orange}{$\sourceSPGraph$}}{minimum size=0.4cm, draw=rwth-orange}
\rotKnoten{(s32)}{(0.75,-1)}{}{minimum size=0.4cm, draw=rwth-orange}
\rotKnoten{(s33)}{(2.25,-1)}{}{minimum size=0.4cm, draw=rwth-orange}
\rotKnoten{(s34)}{(1.5,1)}{}{minimum size=0.4cm, draw=rwth-orange}
\rotKnoten{(s35)}{(3,0)}{\textcolor{rwth-orange}{$\sinkSPGraph$}}{minimum size=0.4cm, draw=rwth-orange}

\blauKnoten{(s36)}{(4.5,1)}{}{minimum size=0.4cm, draw=rwth-green}
\blauKnoten{(s37)}{(6,0)}{\textcolor{rwth-green}{$\sinkSPGraph$}}{minimum size=0.4cm, draw=rwth-green}
\end{scope}

\Pfeil{(3,1.25)}{0.9}{}
\node[right] at (3,-2.1){\small Series Composition} ; 
\end{scope}
\draw[geKante, rwth-orange] (s31) to (s32);
\draw[geKante, rwth-orange] (s31) to (s34);
\draw[geKante, rwth-orange] (s32) to (s33);
\draw[geKante, rwth-orange] (s33) to (s35);
\draw[geKante, rwth-orange] (s34) to (s35);
\draw[geKante, rwth-orange] (s31) to (s35);

\draw[geKante, rwth-green] (s35) to (s36);
\draw[geKante, rwth-green] (s36) to (s37);
\draw[geKante, rwth-green] (s35) to (s37);
\end{tikzpicture}}
		\subfloat[]{\label{fig:Spgraphs2}	\begin{tikzpicture}[xscale=1, yscale=0.75, bmarkKnoten/.style={knoten, draw=rwth-blue}, smarkKnoten/.style={knoten, draw=black},markKnoten/.style={knoten, draw=rwth-red},knoten/.style={shape=circle, thick, fill = white,minimum size=1, draw},geKante/.style={-latex, thick} ]

\begin{scope}[shift={(15,-7.5)}]
\fill[lightgray!50] (0.5,0) ellipse (1cm and 1.2cm);

\schwarzKnoten{(k)}{(0,0)}{\footnotesize $\sourceSPGraph_1$}{minimum size=0.4cm}
\schwarzKnoten{(l)}{(1,0)}{\footnotesize $\sinkSPGraph_1$}{minimum size=0.4cm}
\node at(0.25,0.75){\small $G_1$};

\fill[lightgray!50] (3,0) ellipse (1cm and 1.2cm);

\schwarzKnoten{(m)}{(2.5,0)}{\footnotesize $\sourceSPGraph_2$}{minimum size=0.4cm}
\schwarzKnoten{(n)}{(3.5,0)}{\footnotesize $\sinkSPGraph_2$}{minimum size=0.4cm}
\node at(3.25,0.75){\small $G_2$};

\fill[lightgray!50] (1.8,3) ellipse (1.5cm and 1.2cm);

\schwarzKnoten{(k1)}{(0.8,3)}{\footnotesize $\sourceSPGraph_1$}{minimum size=0.4cm}
\schwarzKnoten{(l1)}{(1.8,3)}{}{minimum size=0.4cm}
\schwarzKnoten{(n1)}{(2.8,3)}{\footnotesize $\sinkSPGraph_2$}{minimum size=0.4cm}

\fill[lightgray!50] (4.5,3) ellipse (1cm and 1.2cm);

\schwarzKnoten{(o)}{(4,3)}{\footnotesize $\sourceSPGraph_3$}{minimum size=0.4cm}
\schwarzKnoten{(p)}{(5,3)}{\footnotesize $\sinkSPGraph_3$}{minimum size=0.4cm}
\node at(4.75,3.75){\small $G_4$};

\fill[lightgray!50] (2.9,6) ellipse (1.5cm and 1.2cm);

\schwarzKnoten{(k2)}{(1.8,6)}{$\sourceSPGraph$}{minimum size=0.4cm}
\schwarzKnoten{(n2)}{(4,6)}{$\sinkSPGraph$}{minimum size=0.4cm}
\schwarzKnoten{(l2)}{(2.9,6.3)}{}{minimum size=0.4cm}
\node at(1.25,3.75){\small $G_3$};

\draw[geKante,lightgray!50] (k) -- (l) node[below, pos=.5, yshift=-6,black] {$L$};
\draw[geKante, rwth-orange, line width=1pt] (k) -- (l);
\draw[geKante,lightgray!50] (m) -- (n) node[below, pos=.5, yshift=-6,black] {$L$};
\draw[geKante,rwth-green, line width=1pt] (m) -- (n) ;

\draw[geKante,lightgray!50] (k1) -- (l1) node[below, yshift=-6,black] {$S$};
\draw[geKante,rwth-orange, line width=1pt] (k1) -- (l1);

\draw[geKante, rwth-green, line width=1pt] (l1) -- (n1);

\draw[geKante,lightgray!50] (o) -- (p) node[below, pos=.5, yshift=-6,black] {$L$};
\draw[geKante, Brombeer, line width=1pt] (o) -- (p);
\draw[geKante, rwth-orange] (k2) to (l2);
\draw[geKante, rwth-green] (l2) to (n2);
\draw[geKante,lightgray!50] (k2) to [bend right=30] node[below, pos=.5, yshift=-1,black] {$P$} (n2);
\draw[geKante, Brombeer, line width=1pt] (k2) to [bend right=30] (n2);

\draw[unKante,ultra thick, shorten <= 0.2cm,shorten >= 0.2cm] (0.5,0.5) -- (1.1,2.5) ;
\draw[unKante,ultra thick,shorten <= 0.2cm,shorten >= 0.2cm] (3,0.5) -- (2.4,2.5);

\draw[unKante,ultra thick,shorten <= 0.2cm,shorten >= 0.2cm] (1.5,3.5) -- (2.1,5.5);
\draw[unKante,ultra thick,shorten <= 0.2cm,shorten >= 0.2cm] (4.2,3.5) -- (3.6,5.5);


\node at (4.5,7.25){$T$}; 
\node at (3.45,6.75){$G$}; 
\end{scope}

\end{tikzpicture}}
	\end{adjustbox}
\caption{(a) Example of an SP digraph defined by a series or parallel composition (b) Example of the representation of an SP digraph $G$ by its SP tree $T$}
\end{figure*}
An SP digraph can be represented in the form of a rooted binary decomposition tree, a so-called \textit{SP tree}, as shown in Figure~\ref{fig:Spgraphs2}. 
The SP tree indicates the composition of the SP digraph by three different vertices, namely $L$-vertices, $S$-vertices, and $P$-vertices.
Each arc of the SP digraph is represented by an individual leaf of the SP tree, an $L$-vertex. 
The $S$- and $P$-vertices are the SP tree's inner vertices whose associated subgraphs are obtained by a series or parallel composition, respectively, of the subgraphs associated with their two child vertices. 
The SP tree can be constructed in polynomial time~\citep{valdes1982recognition}.

\subsection{Multiple sources \& multiple sinks networks}
\label{Subsec:SP-MultipleSS}
Before discussing the general case of multiple sources and multiple sinks, we analyze the complexity of the \ProblemName{} problem for the special case of networks based on SP digraphs with a unique source and single sinks.
In Section~\ref{Subsec:MSInstance}, we present a specific instance of the \ProblemName{} problem. 
In Section~\ref{Subsec:UniqueSourceSingleSink}, we perform, based on this specific instance, a reduction from a weakly $\mathcal{NP}$-complete special case of the \textsc{Partition} problem~\citep{johnson1979computers}, termed as the \RFPairPartition{} problem. 
For the \RFPairPartition{} problem, positive integer pairs are to be separated by a partition of equal weight. 
The problem is formally defined as follows. 
\begin{definition}[\RFPairPartition{} problem]
	Let $s_1,\ldots,s_{2n}$ be $2n$ positive integers partitioned in sets $S_{i}=\{s_{2i-1},s_{2i}\}$ for $i\in [n]$, referred to as pairs, where the integers sum up to $2w$, i.e., $\sum^{2n}_{j=1}s_j=2w$.
	The \textsc{One-out-of-a-pair-partition} problem asks whether there exists a disjoint partition $S^1$, $S^2$ of the integers $s_1,\ldots,s_{2n}$ such that
	\begin{itemize}
		\item[(i)] the sum of all integers is equal in both subsets, i.e.,  
		\begin{align*}
			\sum_{s_i\in S^1} s_i = \sum_{s_i\in S^2} s_i = w,
		\end{align*} 
		\item[(ii)] each pair is separated, i.e., 
		\begin{align*}
			|S^1 \cap S_{i} |= |S^2 \cap S_{i} | = 1 \text{ for all }i\in [n].
		\end{align*}
	\end{itemize}
\end{definition}
We note that $\sum_{i=1}^{n}s_{2i-1}<2w$ holds true as we consider positive integers. 
The weak $\mathcal{NP}$-completeness of the \RFPairPartition{} problem is shown in Theorem~\ref{RF:NPCompletePairPartition} in Appendix~\ref{RF:AppendixA}.

\subsubsection{Maximum split instance}
In this section, we consider a specific instance of the \ProblemName{} problem which we refer to as \textit{maximum split instance}. 
First, we construct the maximum split instance. 
Second, we present properties of an optimal robust flow of this instance. 
\paragraph{Construction of the maximum split instance}
\label{Subsec:MSInstance}
For $n\in \mathbb{Z}_{}$ and $n\geq 2$, let $\mathcal{I}$ be a \RFPairPartition{} instance with positive integers $s_1,\ldots,s_{2n}$ partitioned in pairs $S_{i}=\{s_{2i-1},s_{2i}\}$, $i\in [n]$ such that $\sum_{j=1}^{2n} s_j = 2w$ holds. 
Without loss of generality, we assume the pairs to be given such that $s_{2i-1} \geq s_{2i}$ holds for $i\in [n]$. 
Considering a set of two scenarios $\Lambda=\{1,2\}$, we construct the corresponding maximum split instance $\mathcal{I}_{n}= (G_n, c, \boldsymbol{b})$ as visualized in Figure~\ref{RFfig:killerProof}. 
\begin{figure}[H]
	\centering
	\begin{adjustbox}{max width=1.5\textwidth}
		\centering
		\begin{scaletikzpicturetowidth}{1.075\textwidth}
\begin{tikzpicture}[scale=\tikzscale]
\Knoten{(s)}{(0,0)}{$s$}{thick, minimum size=0.5cm}
\Knoten{(1)}{(3.5,0)}{$v_1$}{thick, minimum size=0.5cm}
\Knoten{(2)}{(7,0)}{$v_2$}{thick, minimum size=0.5cm}
\Knoten{(3)}{(10.5,0)}{$v_3$}{thick, minimum size=0.5cm}
\Knoten{(4)}{(14,0)}{$v_4$}{thick, minimum size=0.5cm}
\Knoten{(5)}{(18,0)}{\scriptsize$v_{2n-4}$}{thick, minimum size=0.5cm}
\Knoten{(6)}{(21.5,0)}{\scriptsize$v_{2n-3}$}{thick, minimum size=0.5cm}
\Knoten{(7)}{(25,0)}{\scriptsize$v_{2n-2}$}{thick, minimum size=0.5cm}
\Knoten{(8)}{(28.5,0)}{\scriptsize$v_{2n-1}$}{thick, minimum size=0.5cm}
\Knoten{(t1)}{(32,0)}{$t_1$}{thick, minimum size=0.5cm}
\Knoten{(t2)}{(35.5,0)}{$t_2$}{thick, minimum size=0.5cm}

\draw[unKante,line width=5,  opacity=0.5,\gruen] (s) --(1);
\draw[thick,->, -latex, black] (s)  to [out=80, in=110, looseness=1.25]node[above]{\scriptsize$s_1-s_2$}(1); 
\draw[thick,->, -latex, black] (s)  --node[below]{\scriptsize$0$}(1); 
\draw[thick,->, -latex, black] (1)  to [out=80, in=110, looseness=1.25]node[above]{\scriptsize$s_2-s_3+2^{n-1}n w$}(2); 
\draw[unKante,line width=5,  opacity=0.5,\gruen] (1) --(2);
\draw[thick,->, -latex, black] (1)  --node[below]{\scriptsize$2^{n-2}n w$}(2); 
\draw[thick,->, -latex, black] (2)  to [out=80, in=110, looseness=1.25]node[above]{\scriptsize$s_3-s_4$}(3); 
\draw[unKante,line width=5,  opacity=0.5,\gruen] (2) --(3);
\draw[thick,->, -latex, black] (2)  --node[below]{\scriptsize$0$}(3); 
\draw[thick,->, -latex, black] (3)  to [out=80, in=110, looseness=1.25]node[above]{\scriptsize$s_4-s_5+2^{n-2} nw $}(4); 
\draw[unKante,line width=5,  opacity=0.5,\gruen] (3) --(4);
\draw[thick,->, -latex, black] (3)  --node[below]{\scriptsize$2^{n-3}nw$}(4); 
\node(dots) at (16,0){\Large $\ldots$};

\draw[thick,->, -latex, black] (5)  to [out=80, in=110, looseness=1.25]node[above, xshift=-0]{\scriptsize$s_{ 2n-3}-s_{2n-2}$}(6); 
\draw[unKante,line width=5,  opacity=0.5,\gruen] (5) --(6);
\draw[thick,->, -latex, black] (5)  --node[below]{\scriptsize$0$}(6); 
\draw[thick,->, -latex, black] (6)  to [out=80, in=110, looseness=1.25]node[above, xshift=0.1cm]{\scriptsize $s_{2n-2}-s_{2n-1}$}(7); 
\draw[thick,->, -latex, black] (6)  to [out=80, in=110, looseness=1.25]node[below, align=center]{\scriptsize $+2^1 nw$}(7); 
\draw[unKante,line width=5,  opacity=0.5,\gruen] (6) --(7);
\draw[thick,->, -latex, black] (6)  --node[below]{\scriptsize$2^0 nw$}(7);

\draw[thick,->, -latex, black] (7)  to [out=80, in=110, looseness=1.25]node[above, xshift=0]{\scriptsize $s_{2n-1}-s_{2n}$}(8); 
\draw[unKante,line width=5,  opacity=0.5,\gruen] (7) --(8);
\draw[thick,->, -latex, black] (7)  --node[below]{\scriptsize$0$}(8); 
\draw[thick,->, -latex, black] (8)  to [out=80, in=110, looseness=1.25]node[above]{\scriptsize $s_{2n}+2^0 nw$}(t1);

\draw[thick,->, -latex, black] (t1)  --node[below]{\scriptsize$2^{n+1}n^2 w$}(t2); 
\draw[thick,->, -latex, black] (s)  to [out=270, in=290]node[below]{\scriptsize$s_2+F  w$}(t2); 
\draw[thick,->, -latex, black] (1)  to [out=270, in=285]node[below]{\scriptsize$s_1+F  w$}(t2); 
\draw[thick,->, -latex, black] (2)  to [out=270, in=280]node[below]{\scriptsize$s_4+F  w$}(t2); 
\draw[thick,->, -latex, black] (3)  to [out=280, in=275]node[below]{\scriptsize$s_3+F  w$}(t2); 
\draw[thick,->, -latex, black] (4)  to [out=280, in=270]node[below]{\scriptsize$s_6+F  w$}(t2); 
\draw[thick,->, -latex, black] (5)  to [out=280, in=265]node[below]{\scriptsize$s_{2n-2}+F  w$}(t2); 
\draw[thick,->, -latex, black] (6)  to [out=280, in=265]node[below]{\scriptsize$s_{2n-3}+F  w$}(t2); 
\draw[thick,->, -latex, black] (7)  to [out=280, in=265]node[below]{\scriptsize$s_{2n}+F  w$}(t2); 
\draw[thick,->, -latex, black] (8)  to [out=280, in=265]node[below]{\scriptsize$s_{2n-1}+F  w$}(t2); 

\node[orange, left] at ($(s)+(-0.5,0.35)$){$n$}; 
\node[blue, left] at ($(s)-(0.5,0.35)$){$n$}; 
\node[orange] at ($(t1)+(0.3,0.95)$){$-n$}; 
\node[blue] at ($(t2)+(0.3,0.95)$){$-n$};

\begin{scope}[shift={(25,-9.75)}]
	\begin{scope}[shift={(-8,-3.5)}]
		\Job{(0,0)}{(0.6,0.6)}{\gruen, \gruen}{}{}
		\node[right] at (0.6,0.3){$A^{\text{fix}}$}; 
	\end{scope}
	\begin{scope}[shift={(-4,-3.5)}]
		\Job{(0,0)}{(0.6,0.6)}{black, black}{}{}
		\node[right] at (0.6,0.3){$A^{\text{free}}$}; 
	\end{scope}
	\begin{scope}[shift={(0,-3.5)}]
		\Job{(0,0)}{(0.6,0.6)}{orange, orange}{}{}
		\node[right] at (0.6,0.3){$\text{Balances } b^1$}; 
	\end{scope}
	\begin{scope}[shift={(6.75,-3.5)}]
		\Job{(0,0)}{(0.6,0.6)}{blue, blue}{}{}
		\node[right] at (0.6,0.3){$\text{Balances } b^2$}; 
	\end{scope}
\end{scope}
%
\node(F) at (12,-12.95){$F=2^{n+1}-n-2$};
\end{tikzpicture}
\end{scaletikzpicturetowidth}
	\end{adjustbox}
	\caption{Construction of the maximum split instance $\mathcal{I}_{n}$}
	\label{RFfig:killerProof}
\end{figure}
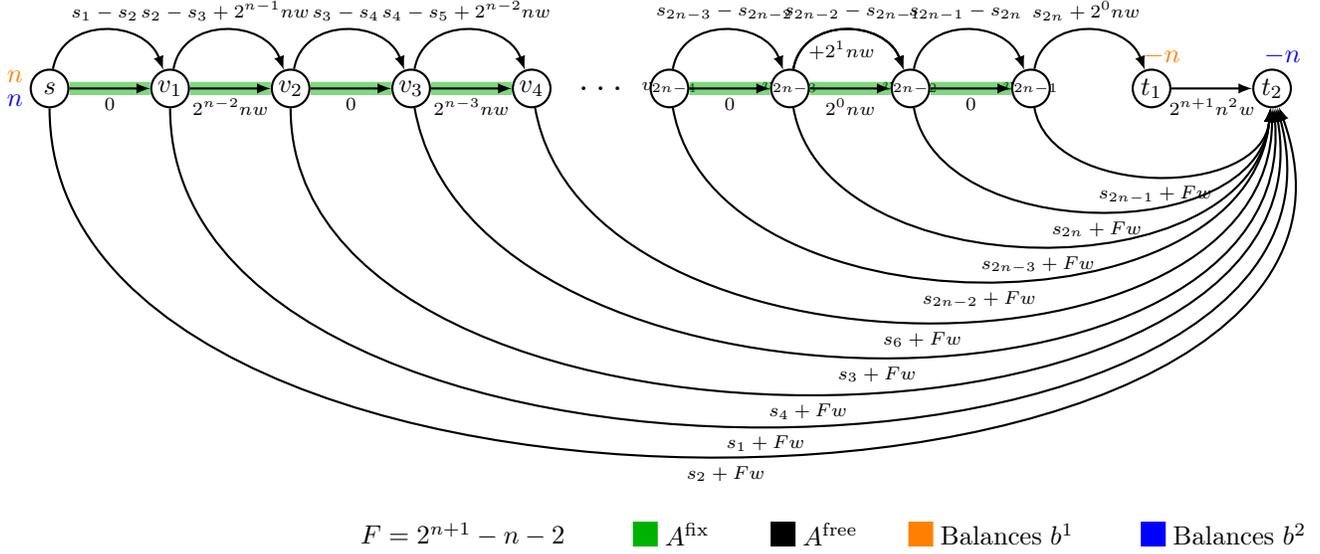
Let $G_n=(V_n,A_n)$ be an SP digraph with vertex set $V_n$ and arc set $A_n$. 
The vertex set $V_n$ contains auxiliary vertices $v_1,\ldots,v_{2n-1}$ and three specified vertices $s$,$t_1$,$t_2$. 
The arc set $A_n$ contains four types of arcs, termed as detour, cross, shortcut, and blocked arcs.  
The \textit{detour arcs} are $a^{d}_{1} = (s,v_1)$, $a^{d}_{i+1} = (v_i,v_{i+1})$ for $i\in [2n-2]$, and $a^{d}_{2n} = (v_{2n-1},t_1)$. 
They create a path from vertex $s$ via auxiliary vertices $v_1,\ldots,v_{2n-1}$ to vertex $t_1$.
The \textit{cross arcs} are $a^{c}_{1} = (s,v_1)$ and $a^{c}_{i+1} = (v_i,v_{i+1})$ for $i\in [2n-2]$. 
They create a path from vertex $s$ via auxiliary vertices $v_1,\ldots,v_{2n-2}$ to vertex $v_{2n-1}$.  
The \textit{shortcut arcs} are $a^{s}_1=(s,t_2)$ and $a^{s}_{i+1}=(v_i,t_2)$ for $i\in [2n-1]$. 
They create a direct connection between vertices $s,v_1,\ldots,v_{2n-1}$ and vertex $t_2$. 
The single \textit{blocked arc} $a^b=(t_1,t_2)$ connects vertices $t_1$ and $t_2$. 
We identify the cross arcs as fixed arcs contained in set $\fix_n$ and all other arcs as free arcs contained in set $\free_n$ such that $A_n=\fix_n \cup \free_n$ holds.
For all arcs $a\in A_n$, we define the cost $c$ as follows. 
The cost of detour arcs $a^d_{i}$ for $i\in [2n]$ is 
\begin{align*}
	c(a^d_{i}) = \begin{cases}
		s_i-s_{i+1} &\text{ if $i\in \{1,3,\ldots,2n-1\},$}\\
		s_i-s_{i+1}+2^{n-\frac{i}{2}}nw &\text{ if $i\in \{2,4,\ldots,2n-2\},$}\\
		s_{2n}+2^{0}nw &\text{ if $i=2n$.}
	\end{cases}
\end{align*}
The cost of cross arcs $a^c_{i}$ for $i\in [2n-1]$ is 
\begin{align*}
	c(a^c_{i}) = 
	\begin{cases}
		0 &\text{ if $i\in \{1,3,\ldots,2n-1\},$}\\
		2^{n-\frac{i}{2}-1}nw &\text{ if $i\in \{2,4,\ldots,2n-2\}.$}
	\end{cases}
\end{align*}
The cost of shortcut arcs $a^s_i$ for $i\in [2n]$ is 
\begin{align*}
	c(a^s_i)= 
	\begin{cases}
		s_{i+1}+Fw &\text{ if $i\in \{1,3,\ldots,2n-1\}$,}\\
		s_{i-1}+Fw &\text{ if $i\in \{2,4,\ldots,2n\}$,}
	\end{cases}
	\ \text{ with $F=2^{n+1}-n-2$.}
\end{align*} 
The cost of the blocked arc is $c(a^b)= 2^{n+1}n^2w$.
We note that the cost is non-negative for all arcs as $s_{2i-1}\geq s_{2i}$ holds for $i\in [n]$ and $s_{2i+1}\leq 2w \leq nw \leq 2^{n-i}nw$ holds for $i\in [n-1]$.
For all vertices $v\in V_n$, we define the balances $\boldsymbol{b}=(b^1,b^2)$ by
\begin{align*}
	b^1(v)
	= \left\{ \begin{array}{ll}
		n 			& \mbox{if $v=s$}, \\ 
		-n 			& \mbox{if $v=t_1$}, \\ 
		0 			& \mbox{otherwise},  \end{array} \right.
	& \hspace*{0.5cm} 
	b^2(v)= \left\{ \begin{array}{ll}
		n 			& \mbox{if $v=s$}, \\ 
		-n 			& \mbox{if $v=t_2$}, \\
		0 			& \mbox{otherwise}.  \end{array} \right.
\end{align*}
Vertex $s$ specifies the unique source and vertices $t_1$ and $t_2$ specify the sinks of the first and second scenario, respectively. 
Overall, we obtain a feasible \ProblemName{} instance $\mathcal{I}_n= (G_n, c, \boldsymbol{b})$. 

\paragraph{Properties of an optimal robust flow of the maximum split instance}
In this paragraph, we present some properties of an optimal robust $\boldsymbol{b}$-flow $\boldsymbol{f}=(f^1,f^2)$ of the maximum split instance $\mathcal{I}_n=(G_n,c,\fett{b})$.
As a result, we establish a unique structure of an optimal robust flow which is independent of the \RFPairPartition{} instance on which the maximum split instance is based. 
The first property is that the flow on the fixed arcs decreases the shorter the distance to vertex $v_{2n-1}$ is, as shown in the following Lemma. 
A proof is found in Appendix~\ref{RF:AppendixC}.
\begin{restatable}[]{lemma}{auxiliarylemmadecreasingflowonfixedarcs}\label{RFlem:BalanceConstraintsMaxSplitInstance}
	For an optimal robust $\fett{b}$-flow $\fett{f}=(f^1,f^2)$ of the maximum split instance $\mathcal{I}_n=(G_n,c,\fett{b})$, it holds  
	\begin{align*}
		f^1(a^c_i) = f^2(a^c_i)\geq f^1(a^c_{i+1})= f^2(a^c_{i+1}) \ \text{ for all }i\in [2n-2].
	\end{align*}
\end{restatable}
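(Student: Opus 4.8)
The plan is to exploit the very particular cost structure: the fixed (cross) arc $a^c_i$ with $i$ odd costs $0$, while the fixed arc $a^c_i$ with $i$ even costs $2^{n-i/2-1}nw$, so the fixed arcs carry a geometrically decreasing sequence of ``prices''. The key observation is that whenever the consistent flow requirement forces $f^1$ (which carries only $n$ units from $s$ to $t_1$) to push flow onto a cross arc, scenario $2$ must pay that same amount on that cross arc as well, plus the blocked arc $a^b=(t_1,t_2)$ is prohibitively expensive ($c(a^b)=2^{n+1}n^2w$, far larger than any routing that avoids it), so in any optimal solution $f^2$ never uses $a^b$ and $f^1$-flow reaching $t_1$ is essentially ``wasted'' unless it is needed to unlock fixed arcs. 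I would first record these two facts: (a) in any optimal robust flow, $f^2(a^b)=0$ and (b) by the argument in the proof of Theorem~\ref{Theorem:ReductionAcyclicGraphs2}-style reasoning (integral unit flows on an acyclic digraph), $f^1(a^c_i)=f^2(a^c_i)\in\mathbb{Z}_{\ge 0}$ on every fixed arc.

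\textbf{Main step: a local exchange argument.}
Suppose toward a contradiction that for some $i\in[2n-2]$ we have $f^1(a^c_i)<f^1(a^c_{i+1})$ (equality $f^1=f^2$ on these fixed arcs is already known). Since both cross arcs are fixed, this means $f^2(a^c_i)<f^2(a^c_{i+1})$ as well. Now I would argue by flow conservation at the shared intermediate vertex $v_i$: the flow entering $v_i$ along the cross path $a^c_i$ is strictly less than the flow leaving along $a^c_{i+1}$, so the deficit must be supplied by the detour arc $a^d_i$ entering $v_i$ (in scenario $2$; in scenario $1$ it is symmetric). One then reroutes one unit in scenario $2$: instead of $\dots a^d_i v_i a^c_{i+1}\dots$ use $\dots a^c_i v_i a^c_{i+1}\dots$, i.e. swap a detour-arc segment for a cross-arc segment at position $i$. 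I need to check two things: that this reroute is feasible for both scenarios simultaneously (it increases $f(a^c_i)$ by $1$, which is allowed since cross arcs are uncapacitated, and it forces scenario $1$ to mirror the change, which is fine because scenario $1$ also had $f^1(a^c_i)<f^1(a^c_{i+1})$ so it too had to enter $v_i$ via $a^d_i$), and that the cost strictly decreases. The cost change comes from comparing $c(a^c_i)$ (which is $0$ if $i$ odd, or $2^{n-i/2-1}nw$ if $i$ even) against $c(a^d_i)$ (which is $s_i-s_{i+1}\ge 0$ if $i$ odd, or $s_i-s_{i+1}+2^{n-i/2}nw$ if $i$ even); in both parities $c(a^d_i)\ge c(a^c_i)$, and in the even case the gap is at least $2^{n-i/2-1}nw>0$, while in the odd case I must also track the knock-on effect at the next vertex (where the swapped unit now arrives via a cross arc rather than a detour arc) to net out a strict decrease — this telescoping of the $2^{k}nw$ terms against the $s_j$ perturbations, which are all bounded by $2w\le nw$, is where the geometric spacing $F=2^{n+1}-n-2$ and the factor $n$ were chosen to dominate.

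\textbf{Where the difficulty lies.}
The routine parts are the conservation bookkeeping and the claim $f^1=f^2$ on fixed arcs. The delicate part is the cost accounting in the \emph{odd} case: rerouting at an odd position $i$ touches two consecutive detour arcs (the odd one at $v_i$ and the even one at $v_{i+1}$, or conversely), so one cannot conclude a strict improvement from a single arc comparison — one must bundle the swap over a length-two segment and show the $2^{n-i/2}nw$ term strictly beats the accumulated $s_j$-differences. I would handle this by observing that any ``legal'' robust flow on $\mathcal{I}_n$ decomposes (in each scenario) into unit paths from $s$ to the respective sink, and then arguing at the level of a single offending unit path: if it ever takes a detour arc $a^d_j$ and later re-joins a cross arc, swapping it to stay on cross arcs as long as possible strictly lowers cost because each ``detour excursion'' of the form $v_{j-1}\,a^d_j\,v_j\,a^d_{j+1}\,v_{j+1}$ (covering one odd and one even index) costs $(s_{j}-s_{j+1})+(s_{j+1}-s_{j+2})+2^{n-(j+1)/2}nw = (s_j - s_{j+2}) + 2^{n-(j+1)/2}nw$, whereas the corresponding cross segment costs only $0 + 2^{n-(j+1)/2-1}nw$, and the difference $2^{n-(j+1)/2-1}nw + (s_j-s_{j+2})$ is strictly positive since $|s_j-s_{j+2}|\le 2w\le nw < 2^{n-(j+1)/2-1}nw$ for the relevant range of indices (this is exactly the inequality flagged in the cost-nonnegativity remark after the construction). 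Once no optimal unit path makes such an excursion beyond what conservation at $v_{2n-1}$ forces, the monotonicity $f^1(a^c_i)=f^2(a^c_i)\ge f^1(a^c_{i+1})=f^2(a^c_{i+1})$ for all $i\in[2n-2]$ follows, and the lemma is proved.
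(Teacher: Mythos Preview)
Your core idea --- a local exchange shifting flow from the detour arc $a^d_i$ onto the parallel cross arc $a^c_i$ at a violated index --- is exactly the paper's approach, and your feasibility check (flow conservation at $v_i$ forces $f^\lambda(a^d_i)\ge \beta:=f^\lambda(a^c_{i+1})-f^\lambda(a^c_i)>0$ in \emph{both} scenarios, so both can absorb the shift) is correct.

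Where you diverge from the paper is in the cost accounting, and here you are making the argument much harder than it is. The arcs $a^d_i$ and $a^c_i$ are \emph{parallel}: both run from $v_{i-1}$ to $v_i$. Hence shifting $\beta$ units from $a^d_i$ to $a^c_i$ changes the flow on exactly these two arcs and nothing else; flow balance is preserved at every vertex with no downstream modification. There is no ``knock-on effect at the next vertex,'' no need to bundle over a length-two segment, no telescoping, and no unit-path decomposition. The entire cost change in scenario $\lambda$ is $\beta\bigl(c(a^c_i)-c(a^d_i)\bigr)<0$, since the paper simply asserts $c(a^d_{i})>c(a^c_{i})$ for all $i\in[2n-1]$ and concludes the contradiction in one line.

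Your preliminary observations (that $f^2(a^b)=0$, that the fixed-arc values are integral) are true but not needed for this lemma; the paper's proof uses only the flow-balance identity at $v_i$ and the single-arc cost comparison. Your ``detour excursion'' analysis at the end is answering a question that never arises once you recognize the shift is local to one pair of parallel arcs.
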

The next property is along which paths the scenario flows $f^1$, $f^2$ of an optimal robust $\fett{b}$-flow $\fett{f}$ might send units and what cost is incurred. 
In the first scenario, flow $f^1$ may only use cross and detour arcs to reach the sink. 
Once flow $f^1$ sends a unit along a detour arc $a^d_i$ for $i\in [2n-1]$, the unit is forwarded along the successive detour arcs $a^d_{i+1},\ldots, a^d_{2n}$.
If the unit was forwarded along one of the successive cross arcs $a^c_{i+1}, \ldots, a^c_{2n-1}$, Lemma~\ref{RFlem:BalanceConstraintsMaxSplitInstance} would not be satisfied. 
Consequently, in the first scenario units are sent along paths of the form $p^1_i=sa^c_1v_1a^c_2\ldots a^c_{i-1}v_{i-1}a^d_{i}v_ia^d_{i+1}\ldots a^d_{2n-1}v_{2n-1}a^d_{2n}t_1$ for $i\in [2n]$ as visualized in Figure~\ref{RFfig:ScenarioPaths}.
The costs of paths $p^1_{2i-1}$ and $p^1_{2i}$ for $i\in [n]$ are  
\begin{align*}
	c(p^1_{2i-1})
	&=\sum_{j=1}^{2i-2} c(a^c_{j})  
	+ \sum_{j=2i-1}^{2n} c(a^d_j)\\
	&= \sum_{j=1}^{i-1} 2^{n-j-1}nw 
	+ 
	\sum_{j= i}^{n} (s_{2j-1} - s_{2j} )
	+
	\sum_{j=i}^{n-1} (s_{2j} - s_{2j+1} + 2^{n-j}nw)
	+ s_{2n} + 2^0nw \\
	&= nw (2^{i}-2)2^{n-i-1} + s_{2i-1} +nw(2^{n-i+1}-1)\\
	&= nw \left(2^{n-i} + 2^{n-1}-1 \right) + s_{2i-1}
\end{align*} 
and
\begin{align*}
	c(p^1_{2i})
	&=\sum_{j=1}^{2i-1} c(a^c_{j})  
	+ \sum_{j=2i}^{2n} c(a^d_j)\\
	&= \sum_{j=1}^{i-1} 2^{n-j-1}nw 
	+ 
	\sum_{j= i+1}^{n} (s_{2j-1} - s_{2j} )
	+
	\sum_{j=i}^{n-1} (s_{2j} - s_{2j+1} + 2^{n-j}nw)
	+ s_{2n} + 2^0nw \\
	&= nw (2^{i}-2)2^{n-i-1} + s_{2i} + nw(2^{n-i+1}-1)\\
	&= nw \left(2^{n-i} + 2^{n-1}-1 \right) + s_{2i}, 
\end{align*} 
respectively. Overall, the cost of the first scenario's paths $p^1_i$, $i\in [2n]$ is $c(p^1_i)= nw \left(2^{n-\lceil \frac{i}{2}\rceil} + 2^{n-1}-1 \right) + s_{i}$. 

In the second scenario, flow $f^2$ only uses cross and shortcut arcs to reach the sink. 
The blocked arc $a^b$ is not used due to its high cost compared to the (sum of) other arcs.  
The detour arcs $a^d_i$, $i\in [2n-1]$ are not used, because they cost about twice as much as the cross arcs (plus $s_i$ minus $s_{i+1}$). 
Moreover, the use of cross instead of detour arcs reduces not only the cost in the second but also in the first scenario.  
The detour arc $a^d_{2n}$ is not used, otherwise the blocked arc $a^b$ would subsequently have to be used  to reach the sink. 
Consequently, in the second scenario units are sent along paths of the form $p^2_i=sa^c_1v_1a^c_2\ldots a^c_{i-1}v_{i-1}a^s_{i}t_2$ for $i\in [2n]$ as visualized in Figure~\ref{RFfig:ScenarioPaths}.
The costs of paths $p^2_{2i-1}$ and $p^2_{2i}$ for $i\in [n]$ are  
\begin{align*}
	c(p^2_{2i-1})
	=\sum_{j=1}^{2i-2} c(a^c_{j})  
	+ c(a^s_{2i-1})
	= \sum_{j=1}^{i-1} 2^{n-j-1}nw 
	+ s_{2i} + Fw 
	= nw (2^{i}-2)2^{n-i-1}  + s_{2i} + Fw
\end{align*} 
and 
\begin{align*}
	c(p^2_{2i})
	=\sum_{j=1}^{2i-1} c(a^c_{j})  
	+ c(a^s_{2i})
	= \sum_{j=1}^{i-1} 2^{n-j-1}nw 
	+ s_{2i-1} + Fw 
	= nw (2^{i}-2)2^{n-i-1}  + s_{2i-1} + Fw,
\end{align*} 
respectively. Overall, the cost of the second scenario's paths $p^2_i$, $i\in [2n]$ is 
\begin{align*}
	c(p^2_i)=
	\begin{cases}
		nw (2^{i}-2)2^{n-i-1} + s_{i+1} + Fw & \text{with $i\in \{1,3,\ldots,2n-1\},$}\\
		nw (2^{i}-2)2^{n-i-1}  + s_{i-1} + Fw & \text{with $i\in \{2,4,\ldots,2n\}$}.
	\end{cases}
\end{align*} 
We note that paths $p^1_i$ and $p^2_i$ include the same fixed arcs for all $i\in [2n]$. 
Thus, the choice of the paths in the first and second scenario depends on each other due to the consistent flow constraints. 
More precisely, if a unit is sent along path $p^1_i$, $i\in [2n]$ in the first scenario, a unit is also sent along path $p^2_i$ in the second scenario (and vice versa), as visualized in Figure~\ref{RFfig:ScenarioPaths}. 
If the unit was sent along a different path $p\neq p^2_i$ in the second scenario, the flow would be either infeasible as the consistent flow constraints are not satisfied (for $p$ with $\{a^c_1,\ldots,a^c_{i-1}\}\not \subseteq A(p)$ or $a^c_j \in A(p)$ for at least one $j\in \{i,\ldots,2n-1\}$) and/or the unit could be sent cheaper (for $p$ with $a^d_j\in A(p)$ for at least one $j\in [2n-1]$). 
We refer to this property as \textit{equal paths property}. 
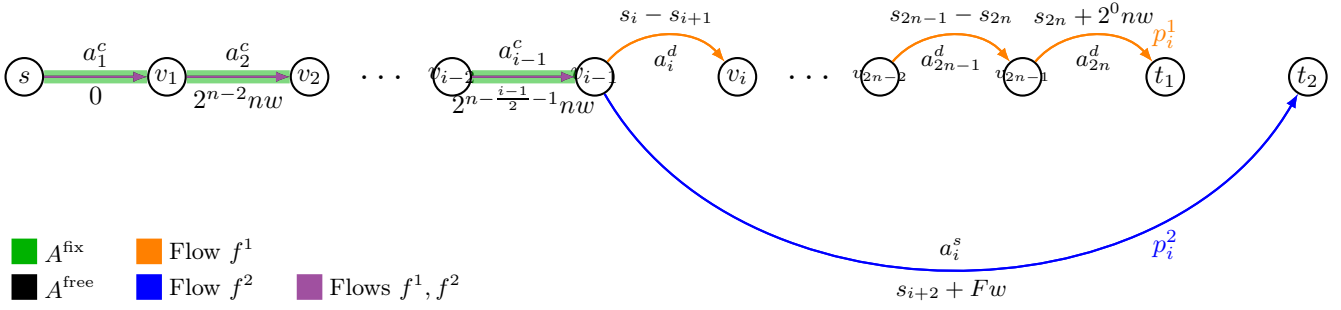
\begin{figure}[H]
	\begin{adjustbox}{max width=1\textwidth}
		\begin{scaletikzpicturetowidth}{\textwidth}
\begin{tikzpicture}[scale=\tikzscale]
\Knoten{(s)}{(0,0)}{$s$}{thick, minimum size=0.5cm}
\Knoten{(1)}{(2,0)}{$v_1$}{thick, minimum size=0.5cm}
\Knoten{(2)}{(4,0)}{$v_2$}{thick, minimum size=0.5cm}
\node(dots) at (5,0){\Large $\ldots$};
\Knoten{(3)}{(6,0)}{$v_{i-2}$}{thick, minimum size=0.5cm}
\Knoten{(4)}{(8,0)}{$v_{i-1}$}{thick, minimum size=0.5cm}
\Knoten{(5)}{(10,0)}{$v_{i}$}{thick, minimum size=0.5cm}
\node(dots) at (11,0){\Large $\ldots$};
\Knoten{(6)}{(12,0)}{\scriptsize $v_{2n-2}$}{thick, minimum size=0.5cm}
\Knoten{(7)}{(14,0)}{\scriptsize$v_{2n-1}$}{thick, minimum size=0.5cm}
\Knoten{(t1)}{(16,0)}{$t_1$}{thick, minimum size=0.5cm}
\Knoten{(t2)}{(18,0)}{$t_2$}{thick, minimum size=0.5cm}

\draw[unKante,line width=5,  opacity=0.5,\gruen] (s) --(1);
\draw[thick,->, -latex] (s)  --node[above, black]{$a^c_1$}(1); 
\draw[thick,->, -latex, Brombeer] (s)  --node[below, black]{$0$}(1); 
\draw[unKante,line width=5,  opacity=0.5,\gruen] (1) --(2);
\draw[thick,->, -latex] (1)  --node[above, black]{$a^c_{2}$}(2); 
\draw[thick,->, -latex, Brombeer] (1)  --node[below, black]{$2^{n-2}nw$}(2); 
\draw[unKante,line width=5,  opacity=0.5,\gruen] (3) --(4);
\draw[thick,->, -latex, ] (3)  --node[above, black]{$a^c_{i-1}$}(4); 
\draw[thick,->, -latex, Brombeer] (3)  --node[below, black]{$2^{n-\frac{i-1}{2}-1}nw$}(4); 

\draw[thick,->, -latex, orange] (4)  to [bend left=50]node[below, black]{\small $a^d_i$}(5); 
\draw[thick,->, -latex, orange] (4)  to [bend left=50]node[above, black]{\small $s_{i}-s_{i+1}$}(5); 
\draw[thick,->, -latex, orange] (6)  to [bend left=50]node[below, black]{\small $a^d_{2n-1}$}(7); 
\draw[thick,->, -latex, orange] (6)  to [bend left=50]node[above, black]{\small $s_{2n-1}-s_{2n}$}(7); 
\draw[thick,->, -latex, orange] (7)  to [bend left=50]node[below, black]{\small $a^d_{2n}$}(t1); 
\draw[thick,->, -latex, orange] (7)  to [bend left=50]node[above, black]{\small $s_{2n}+2^0 nw$}(t1); 

\draw[thick,->, -latex, blue] (4)  to [out=300, in=240]node[above, black]{\small $a^s_i$}(t2); 
\draw[thick,->, -latex, blue] (4)  to [out=300, in=240]node[below, black]{\small $s_{i+2} + F w $}(t2); 

\node[orange] at (16,0.6){$p^1_i$};
\node[blue] at (16,-2.35){$p^2_i$};

\begin{scope}[shift={(2.35,0.4)}]
\begin{scope}[shift={(-2.5,-3)}]
\Job{(0,0)}{(0.3,0.3)}{\gruen, \gruen}{}{}
\node[right] at (0.3,0.15){\small$A^{\text{fix}}$}; 
\end{scope}
\begin{scope}[shift={(-2.5,-3.5)}]
\Job{(0,0)}{(0.3,0.3)}{black, black}{}{}
\node[right] at (0.3,0.15){\small$A^{\text{free}}$}; 
\end{scope}
\begin{scope}[shift={(-0.75,-3)}]
\Job{(0,0)}{(0.3,0.3)}{orange, orange}{}{}
\node[right] at (0.3,0.15){\small$\text{Flow } f^1$}; 
\end{scope}
\begin{scope}[shift={(-0.75,-3.5)}]
\Job{(0,0)}{(0.3,0.3)}{blue, blue}{}{}
\node[right] at (0.3,0.15){\small$\text{Flow } f^2$}; 
\end{scope}
\begin{scope}[shift={(1.5,-3.5)}]
\Job{(0,0)}{(0.3,0.3)}{Brombeer, Brombeer}{}{}
\node[right] at (0.3,0.15){\small $\text{Flows } f^1, f^2$}; 
\end{scope}
\end{scope}
\end{tikzpicture}
\end{scaletikzpicturetowidth}
	\end{adjustbox}
	\caption{Example of paths $p^1_i$ and $p^2_i$ for $i\in \{1,3,\ldots,2n-1\}$ and their dependence on each other}
	\label{RFfig:ScenarioPaths}
\end{figure}
In the next step, we note the following about the cost of the scenario flows. 
In terms of minimizing the cost in the first scenario, the aim is a flow which uses as few detour arcs as possible or, in other words, as many cross arcs as possible. 
In terms of minimizing the cost in the second scenario, the aim is a flow which uses as few cross arcs as possible. 
The best case for the first scenario's cost and the worst case for the second scenario's cost occur if a robust flow sends all units along path $p^1_{2n}$ in the first and along path $p^2_{2n}$ in the second scenario. 
Conversely, the worst case for the first scenario's cost and the best case for the second scenario's cost occur if a robust flow sends all units along path $p^1_1$ in the first and along path $p^2_1$ in the second scenario. 
As the cost of a robust flow is the maximum of its scenario flows' cost, we aim at a trade-off between the best and worst case of the first and second scenario's cost. 
The fixed (cross) arcs are used in both scenarios and contributes significantly to the cost incurred. 
Therefore, in the following lemma, we consider the cost of an optimal robust flow incurred by the fixed arcs. 
A proof is found in Appendix~\ref{RF:AppendixC}. 
\begin{restatable}[]{lemma}{AuxiliaryLemmaCostOnFixedArcs}
	\label{RFlem:HilfslemmaCostOfFixedArcsForOptimalSol}
	For an optimal robust $\fett{b}$-flow $\fett{f}=(f^1,f^2)$ of the maximum split instance $\mathcal{I}_{n}= (G_n=(V_n,A_n), c, \boldsymbol{b})$ with $A_n=A^{\freehelp}_{n}\cup A^{\fixhelp}_{n}$, the total cost incurred on the fixed arcs is $c^{\fixhelp}:=nw(2^{n-1}n-2^n+1)$, i.e., 
	\begin{align*}
		\sum_{a\in A^{\fixhelp}_{n}} c(a) \cdot f^1(a)  = \sum_{a\in A^{\fixhelp}_{n}} c(a) \cdot f^2(a)  = c^{\fixhelp}. 
	\end{align*}
\end{restatable}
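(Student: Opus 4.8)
The plan is to use the \emph{equal paths property} to represent an optimal robust flow by a vector of path multiplicities, to rewrite the cost incurred on the fixed arcs as an explicit affine function of a single aggregated quantity, and then to show that optimality forces that quantity to one specific value. By the equal paths property, an optimal robust $\fett{b}$-flow $\fett{f}=(f^1,f^2)$ of $\mathcal{I}_n$ is described by integers $x_i\geq 0$, $i\in[2n]$, with $\sum_{i=1}^{2n}x_i=n$, where $f^1$ sends $x_i$ units along $p^1_i$ and $f^2$ sends $x_i$ units along $p^2_i$; Lemma~\ref{RFlem:BalanceConstraintsMaxSplitInstance} is what guarantees that this decomposition into the paths $p^1_i$, $p^2_i$ is exhaustive. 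I would set $y_k:=x_{2k-1}+x_{2k}$, so $\sum_{k=1}^n y_k=n$, and $T:=\sum_{k=1}^n y_k 2^{n-k}$. A unit routed along $p^1_i$ (equivalently $p^2_i$) traverses the cross arc $a^c_{2k}$ exactly when $i\geq 2k+1$, so $f^1(a^c_{2k})=f^2(a^c_{2k})=\sum_{k'=k+1}^n y_{k'}$; since $c(a^c_i)=0$ for odd $i$ and $c(a^c_{2k})=2^{n-k-1}nw$, summing over $k\in[n-1]$ and interchanging the order of summation gives
\[
\sum_{a\in A^{\fixhelp}_n} c(a)f^1(a)=\sum_{a\in A^{\fixhelp}_n} c(a)f^2(a)=nw\bigl(2^{n-1}n-T\bigr).
\]
Hence it suffices to prove that $T=2^n-1$ for every optimal robust flow, because then this common value equals $nw(2^{n-1}n-2^n+1)=c^{\fixhelp}$.

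Next I would use the path costs $c(p^1_i)$ and $c(p^2_i)$ computed above to express both scenario costs as affine functions of $T$. A direct computation yields, with $\delta:=T-2^n+1$ and the constant $B_0:=nw(n2^{n-1}+2^n-n-1)$ depending only on $n$ and $w$, the identities $c(f^1)=B_0+nw\,\delta+\Sigma_1$ and $c(f^2)=B_0-nw\,\delta+\Sigma_2$, where $\Sigma_1=\sum_i x_i s_i\geq 0$ and $\Sigma_2=\sum_k(x_{2k-1}s_{2k}+x_{2k}s_{2k-1})\geq 0$. The point is that the huge, partition-independent terms cancel in $c(f^1)+c(f^2)$ up to these small remainders, so the two scenario costs move against each other as $T$ varies, and the balanced choice is $T=2^n-1$.

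To pin down $T$, I would argue by contradiction: suppose $\fett{f}$ is optimal but $\delta\neq 0$. If $\delta\geq 1$ then $c(\fett{f})\geq c(f^1)\geq B_0+nw$; if $\delta\leq -1$ then $c(\fett{f})\geq c(f^2)\geq B_0+nw$; either way $c(\fett{f})\geq B_0+nw\geq B_0+2w$ since $n\geq 2$. On the other hand, the robust flow $\fett{f}'$ with $y'_k=1$ for all $k$ (so $T'=2^n-1$) has $c(\fett{f}')=B_0+\max(\Sigma_1',\Sigma_2')$, and each pair contributes at most $\max\{s_{2k-1},s_{2k}\}=s_{2k-1}$ to either of $\Sigma_1',\Sigma_2'$, so $c(\fett{f}')\leq B_0+\sum_{k=1}^n s_{2k-1}=B_0+2w-\sum_{k=1}^n s_{2k}<B_0+2w$ by positivity of the $s_i$. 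This contradicts optimality of $\fett{f}$, so $\delta=0$, i.e.\ $T=2^n-1$, and the fixed-arc cost equals $c^{\fixhelp}$; the equality of the two sums in the statement is immediate from $f^1=f^2$ on fixed arcs.

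The routine ingredients are the flow-value count on the cross arcs, the summation interchange, and the final magnitude comparison (which only uses $n\geq 2$, positivity of the $s_i$, and $\sum_k s_{2k-1}<2w$). The step I expect to require genuine care is the cost decomposition: one must strip the detour-arc contributions off the known values $c(p^1_i)$ and the shortcut-arc contributions off the known values $c(p^2_i)$ and verify that the remainders collapse into exactly the affine form in $T$ stated above, keeping the two distinct remainder sums $\Sigma_1$ and $\Sigma_2$ apart and confirming the precise constant $B_0$.
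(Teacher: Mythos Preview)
Your proposal is correct and is essentially the same argument as the paper's, carried out with a cleaner parametrization. Both proofs are contradiction arguments that hinge on two facts: the fixed-arc cost can only change in steps of at least $nw$, and the ``integer'' contributions $\Sigma_1,\Sigma_2$ are bounded by $\sum_k s_{2k-1}<2w\leq nw$; together these force the fixed-arc cost to the unique value $c^{\fixhelp}$.

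The difference is in packaging. The paper constructs two explicit comparison flows (the ones with $x_{2k}=1$ and with $x_{2k-1}=1$), computes their scenario costs separately, and then runs two asymmetric case analyses: if the fixed-arc cost is too small it lower-bounds $c(\tilde f^1)$ via a telescoping decomposition and compares to one flow; if too large it lower-bounds $c(\tilde f^2)$ via the shortcut-arc cost and compares to the other. Your aggregation $T=\sum_k y_k2^{n-k}$ and the affine identities $c(f^1)=B_0+nw\,\delta+\Sigma_1$, $c(f^2)=B_0-nw\,\delta+\Sigma_2$ unify the two cases into a single symmetric statement, and a single comparison flow suffices. This buys transparency: the trade-off between the two scenarios is visible at a glance, and the constant $B_0$ makes the ``balance point'' explicit. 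The paper's version, in exchange, is slightly more self-contained in that its second case only needs the crude lower bound from the shortcut arcs rather than the full equal-paths decomposition of $f^2$; you lean on the equal-paths property (established in the discussion preceding the lemma) for both scenarios.
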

As a result, the flow on the fixed arcs is determined for an optimal robust flow as shown in the following lemma. 
A proof is found in Appendix~\ref{RF:AppendixC}.
\begin{restatable}[]{lemma}{AuxiliaryLemmaFlowValueOnFixedArcs}\label{RFlem:HilfslemmaFlowValueOnFixedArcsOfOptimalSol}
	For an optimal robust $\fett{b}$-flow  $\fett{f}=(f^1,f^2)$ of the maximum split instance $\mathcal{I}_{n}= (G_n=(V_n,A_n), c, \boldsymbol{b})$ with $A_n=A^{\freehelp}_{n}\cup A^{\fixhelp}_{n}$, the flow on the fixed arcs is given by 
	$f^1(a^c_{2i-1})  = f^2(a^c_{2i-1})\in  \{n-i, n-i+1\}$ for $a^c_{2i-1}\in \fix_n$ and 
	$f^1(a^c_{2i})= f^2(a^c_{2i}) = n-i$ for $a^c_{2i}\in \fix_n.$
\end{restatable}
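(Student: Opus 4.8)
The plan is to extract the flow on the fixed arcs directly from the two facts already available for an optimal robust flow $\fett{f}=(f^1,f^2)$ of $\mathcal{I}_n$: the monotonicity of Lemma~\ref{RFlem:BalanceConstraintsMaxSplitInstance} and the exact total cost $c^{\fixhelp}$ on the fixed arcs of Lemma~\ref{RFlem:HilfslemmaCostOfFixedArcsForOptimalSol}. First I would write $g_j:=f^1(a^c_{2j})=f^2(a^c_{2j})$ for $j\in[n-1]$, where the two scenario values agree because the cross arcs are exactly the fixed arcs and $\fett{f}$ satisfies the consistent flow constraints. Since $s$ is the origin of $G_n$, its total out-flow equals $b^1(s)=n$ and hence $f^1(a^c_1)\le n$; together with Lemma~\ref{RFlem:BalanceConstraintsMaxSplitInstance} this gives a chain of non-negative integers $n\ge f^1(a^c_1)\ge g_1\ge f^1(a^c_3)\ge g_2\ge\cdots\ge g_{n-1}\ge f^1(a^c_{2n-1})\ge 0$. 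Because the odd-indexed cross arcs have cost $0$ while $c(a^c_{2j})=2^{n-j-1}nw$, Lemma~\ref{RFlem:HilfslemmaCostOfFixedArcsForOptimalSol} reduces, after dividing by $nw$, to the single equation $\sum_{j=1}^{n-1}2^{n-j-1}g_j=2^{n-1}(n-2)+1$.

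The heart of the proof is then a purely arithmetic statement: the only non-increasing sequence of non-negative integers $g_1\ge\cdots\ge g_{n-1}$ with $g_1\le n$ that satisfies this equation is $g_j=n-j$. I would attack it by setting $g_j=(n-j)+\delta_j$ with $\delta_j\in\mathbb{Z}$; because $\sum_{j=1}^{n-1}2^{n-j-1}(n-j)$ equals precisely $2^{n-1}(n-2)+1$ (this is the identity $\sum_{k=1}^{n-1}k\,2^{k-1}=(n-2)2^{n-1}+1$ already used to obtain $c^{\fixhelp}$), the equation collapses to $\sum_{j=1}^{n-1}2^{n-j-1}\delta_j=0$, where the monotonicity chain and $g_1\le n$ translate into $\delta_j\ge j-n$, $\delta_{j+1}\le\delta_j+1$, and $\delta_1\le 1$. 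To conclude $\delta_j=0$ for all $j$, I would combine a magnitude estimate and a parity argument: isolating the leading term $2^{n-2}g_1$ and using $\sum_{j\ge 2}2^{n-j-1}g_j\le(2^{n-2}-1)g_1$ together with $g_1\le n$ forces $g_1\in\{n-1,n\}$; the lowest-order term $2^0 g_{n-1}$ forces $g_{n-1}$ to be odd; and the value $g_1=n$ is excluded because it leaves the residual equation $\sum_{j=2}^{n-1}2^{n-j-1}g_j=2^{n-2}(n-4)+1$, which is infeasible for a non-increasing non-negative integer sequence (its right-hand side is already negative for $n\le 3$, and for larger $n$ one re-applies the same leading-term estimate to $g_2$, then $g_3$, and so on, until the monotonicity constraint is violated). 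With $g_1=n-1$ in hand, the identical estimate applied successively to $g_2,\dots,g_{n-1}$ pins down $g_j=n-j$; equivalently, the whole step can be organised as an induction on $n$ that peels off the leading digit. I expect this inductive bookkeeping -- tracking how the monotonicity constraint interacts with the geometric weights -- to be the main obstacle.

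Finally, once $f^1(a^c_{2i})=f^2(a^c_{2i})=g_i=n-i$ is established for $i\in[n-1]$, the bounds on the odd-indexed cross arcs come for free from Lemma~\ref{RFlem:BalanceConstraintsMaxSplitInstance}. For $2\le i\le n-1$ we have $n-i=f^1(a^c_{2i})\le f^1(a^c_{2i-1})\le f^1(a^c_{2i-2})=g_{i-1}=n-i+1$, so $f^1(a^c_{2i-1})=f^2(a^c_{2i-1})\in\{n-i,n-i+1\}$; for $i=1$ we have $n-1=g_1\le f^1(a^c_1)\le n$, i.e.\ $f^1(a^c_1)\in\{n-1,n\}$; and for $i=n$ we have $0\le f^1(a^c_{2n-1})\le g_{n-1}=1$, i.e.\ $f^1(a^c_{2n-1})\in\{0,1\}$. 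This gives the asserted flow values on all fixed arcs.
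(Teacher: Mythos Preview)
Your overall framework coincides with the paper's: use Lemmas~\ref{RFlem:BalanceConstraintsMaxSplitInstance} and~\ref{RFlem:HilfslemmaCostOfFixedArcsForOptimalSol} to reduce to $\sum_{j=1}^{n-1}2^{n-j-1}g_j=2^{n-1}(n-2)+1$ with $n\ge g_1\ge\cdots\ge g_{n-1}\ge0$, show the unique solution is $g_j=n-j$, and then recover the odd-indexed values from the monotonicity chain. The setup, the reduction, the leading-term estimate giving $g_1\in\{n-1,n\}$, the induction ``with $g_1=n-1$ in hand, peel off and recurse,'' and the final treatment of the odd-indexed arcs are all correct.

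The genuine gap is the exclusion of $g_1=n$. Your ``re-apply the same leading-term estimate to $g_2,g_3,\ldots$'' does not work as stated: after subtracting $2^{n-2}n$, the residual right-hand side $2^{n-2}(n-4)+1$ is not of the form $2^{m-1}(m-2)+1$ for a smaller $m$, so the two-value pinning no longer holds --- the estimate only gives $g_2\ge n-3$ together with $g_2\le g_1=n$, and the argument branches into several subcases rather than funneling to a single monotonicity violation. You correctly flag this bookkeeping as the main obstacle, but you do not resolve it. The paper does not attempt a direct magnitude estimate here; it outsources the entire uniqueness statement to Theorem~\ref{RFtheorem:Potenzen} in Appendix~\ref{RF:AppendixB}, proved by passing to difference variables $y_i$ (so that $\sum_i y_i=g_1$), first showing via a carry-propagation procedure that $\sum_i y_i=n-1$ (Lemma~\ref{RFlem:erstesHilfslemma}) --- which is exactly the exclusion of $g_1=n$ --- and then that all $y_i=1$ (Lemma~\ref{RFlem:LetzteStelleEins}).
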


Using the properties about the paths and their cost as well as the insights of Lemmas~\ref{RFlem:BalanceConstraintsMaxSplitInstance}--\ref{RFlem:HilfslemmaFlowValueOnFixedArcsOfOptimalSol}, we present the structure of an optimal robust $\fett{b}$-flow $\fett{f}=(f^1,f^2)$ of the maximum split instance $\mathcal{I}_n$, visualized in Figure~\ref{RFfig:FlowBalanceConstraintsOfNotSetValues} (where the shortcut arcs are only hinted). 
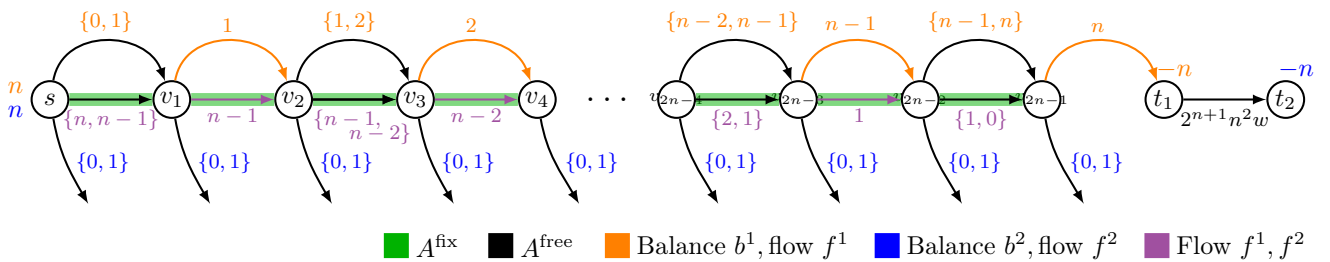
\begin{figure}[H]
	\begin{adjustbox}{max width=1\textwidth}
		\begin{scaletikzpicturetowidth}{\textwidth}
\begin{tikzpicture}[scale=\tikzscale]
\Knoten{(s)}{(0,0)}{$s$}{thick, minimum size=0.5cm}
\Knoten{(1)}{(3.5,0)}{$v_1$}{thick, minimum size=0.5cm}
\Knoten{(2)}{(7,0)}{$v_2$}{thick, minimum size=0.5cm}
\Knoten{(3)}{(10.5,0)}{$v_3$}{thick, minimum size=0.5cm}
\Knoten{(4)}{(14,0)}{$v_4$}{thick, minimum size=0.5cm}
\Knoten{(5)}{(18,0)}{\scriptsize$v_{2n-4}$}{thick, minimum size=0.5cm}
\Knoten{(6)}{(21.5,0)}{\scriptsize$v_{2n-3}$}{thick, minimum size=0.5cm}
\Knoten{(7)}{(25,0)}{\scriptsize$v_{2n-2}$}{thick, minimum size=0.5cm}
\Knoten{(8)}{(28.5,0)}{\scriptsize$v_{2n-1}$}{thick, minimum size=0.5cm}
\Knoten{(t1)}{(32,0)}{$t_1$}{thick, minimum size=0.5cm}
\Knoten{(t2)}{(35.5,0)}{$t_2$}{thick, minimum size=0.5cm}

\draw[unKante,line width=5,  opacity=0.5,\gruen] (s) --(1);
\draw[thick,->, -latex, black] (s)  to [out=80, in=110, looseness=1.25]node[above, orange]{\footnotesize$\{0,1\}$}(1); 
\draw[thick,->, -latex, black] (s)  --node[below, Brombeer]{\footnotesize$\{n,n-1\}$}(1); 
\draw[thick,->, -latex, orange] (1)  to [out=80, in=110, looseness=1.25]node[above, orange]{\footnotesize$1$}(2); 
\draw[unKante,line width=5,  opacity=0.5,\gruen] (1) --(2);
\draw[thick,->, -latex, Brombeer] (1)  --node[below, Brombeer]{\footnotesize$n-1$}(2); 
\draw[thick,->, -latex, black] (2)  to [out=80, in=110, looseness=1.25]node[above, orange]{\footnotesize$\{1,2\}$}(3); 
\draw[unKante,line width=5,  opacity=0.5,\gruen] (2) --(3);
\draw[thick,->, -latex, ] (2)  --node[below, xshift=-0.1cm, Brombeer]{\footnotesize $\{n-1,$}(3); 
\draw[thick,->, -latex, ] (2)  --node[below=0.2cm, xshift=0.4cm, Brombeer, align=center]{\footnotesize $n-2\}$}(3); 
\draw[thick,->, -latex, orange] (3)  to [out=80, in=110, looseness=1.25]node[above]{\footnotesize$2$}(4); 
\draw[unKante,line width=5,  opacity=0.5,\gruen] (3) --(4);
\draw[thick,->, -latex, Brombeer] (3)  --node[below, Brombeer]{\footnotesize$n-2$}(4);

\node(dots) at (16,0){\Large $\ldots$};

\draw[thick,->, -latex, black] (5)  to [out=80, in=110, looseness=1.25]node[above, xshift=-0, orange]{\footnotesize$\{n-2,n-1\}$}(6); 
\draw[unKante,line width=5,  opacity=0.5,\gruen] (5) --(6);
\draw[thick,->, -latex, black] (5)  --node[below, Brombeer]{\footnotesize$\{2,1\}$}(6); 
\draw[thick,->, -latex, orange] (6)  to [out=80, in=110, looseness=1.25]node[above, align=center, orange]{\footnotesize$n-1$}(7); 
\draw[unKante,line width=5,  opacity=0.5,\gruen] (6) --(7);
\draw[thick,->, -latex, Brombeer] (6)  --node[below, Brombeer]{\footnotesize$1$}(7);

\draw[thick,->, -latex, black] (7)  to [out=80, in=110, looseness=1.25]node[above, xshift=0, orange]{\footnotesize$\{n-1,n\}$}(8); 
\draw[unKante,line width=5,  opacity=0.5,\gruen] (7) --(8);
\draw[thick,->, -latex, black] (7)  --node[below, Brombeer]{\footnotesize $\{1,0\}$}(8); 
\draw[thick,->, -latex, orange] (8)  to [out=80, in=110, looseness=1.25]node[above, orange]{\footnotesize$n$}(t1);

\draw[thick,->, -latex, black] (t1)  --node[below]{\footnotesize $2^{n+1}n^2w$}(t2); 
\draw[thick,->, -latex, ] (s)  to [out=280, in=120]node[right, blue]{\footnotesize$\{0,1\}$}($(s)+(1.1,-3)$); 
\draw[thick,->, -latex, ] (1)  to [out=280, in=120]node[right, blue]{\footnotesize$\{0,1\}$}($(1)+(1.1,-3)$); 
\draw[thick,->, -latex, ] (2)  to [out=280, in=120]node[right, blue]{\footnotesize$\{0,1\}$}($(2)+(1.1,-3)$); 
\draw[thick,->, -latex, ] (3)  to [out=280, in=120]node[right, blue]{\footnotesize$\{0,1\}$}($(3)+(1.1,-3)$); 
\draw[thick,->, -latex, ] (4)  to [out=280, in=120]node[right, blue]{\footnotesize$\{0,1\}$}($(4)+(1.2,-3)$); 
\draw[thick,->, -latex, ] (5)  to [out=280, in=120]node[right, blue]{\footnotesize$\{0,1\}$}($(5)+(1.2,-3)$);  
\draw[thick,->, -latex, ] (6)  to [out=280, in=120]node[right, blue]{\footnotesize$\{0,1\}$}($(6)+(1.2,-3)$);  
\draw[thick,->, -latex, ] (7)  to [out=280, in=120]node[right, blue]{\footnotesize$\{0,1\}$}($(7)+(1.2,-3)$); 
\draw[thick,->, -latex, ] (8)  to [out=280, in=120]node[right, blue]{\footnotesize$\{0,1\}$}($(8)+(1.2,-3)$); 

\node[orange, left] at ($(s)+(-0.5,0.35)$){$n$}; 
\node[blue, left] at ($(s)-(0.5,0.35)$){$n$}; 
\node[orange] at ($(t1)+(0.3,0.85)$){$-n$}; 
\node[blue] at ($(t2)+(0.3,0.85)$){$-n$};

\begin{scope}[shift={(22,-1)}]
\begin{scope}[shift={(-12.35,-3.5)}]
\Job{(0,0)}{(0.6,0.6)}{\gruen, \gruen}{}{}
\node[right] at (0.6,0.3){$A^{\text{fix}}$}; 
\end{scope}
\begin{scope}[shift={(-9.35,-3.5)}]
\Job{(0,0)}{(0.6,0.6)}{black, black}{}{}
\node[right] at (0.6,0.3){$A^{\text{free}}$}; 
\end{scope}
\begin{scope}[shift={(-6,-3.5)}]
\Job{(0,0)}{(0.6,0.6)}{orange, orange}{}{}
\node[right] at (0.6,0.3){$\text{Balance } b^1, \text{flow } f^1$}; 
\end{scope}
\begin{scope}[shift={(1.75,-3.5)}]
\Job{(0,0)}{(0.6,0.6)}{blue, blue}{}{}
\node[right] at (0.6,0.3){$\text{Balance } b^2,\text{flow } f^2$}; 
\end{scope}
\begin{scope}[shift={(9.5,-3.5)}]
\Job{(0,0)}{(0.6,0.6)}{Brombeer, Brombeer}{}{}
\node[right] at (0.6,0.3){$\text{Flow } f^1, f^2$}; 
\end{scope}
\end{scope}
\end{tikzpicture}
\end{scaletikzpicturetowidth}
	\end{adjustbox}
	\caption{The structure of an on optimal robust $\fett{b}$-flow of the maximum split instance $\mathcal{I}_n$}\label{RFfig:FlowBalanceConstraintsOfNotSetValues}
\end{figure}
As $f^1(a^c_{2i}) = n-i$ holds for $i\in [n-1]$, we obtain $f^1(a^d_{2i}) = i$ for $i\in [n-1]$. 
Due to the flow balance constraints, it holds $f^1(a^d_{2i-1}) \in \{i-1,i\}$ for $i\in [n]$. 
As $f^2(a^c_{2i}) = n-i$ holds for $i\in [n-1]$, we obtain $f^2(a) = 1$ for either shortcut arc $a=a^s_{2i-1}$ or $a=a^s_{2i}$ with $i\in [n]$. 
We note that each shortcut arc is included in only one path of the second scenario.
Thus, flow $f^2$ sends one unit along either path $p^2_{2i-1}$ or $p^2_{2i}$ for all $i\in [n]$. 
Because of the equal path property, flow $f^1$ also sends one unit along either path $p^1_{2i-1}$ or $p^1_{2i}$ for all $i\in [n]$. 
We refer to this property as \textit{one path of a pair property}.
Overall, we obtain the unique structure of an optimal robust flow where one unit is sent along one path of each pair in both scenarios. 

Finally, we conclude with remarks about the design of the arc cost of the maximum split instance. 
By choosing powers of two as arc cost, we achieve that a differently sent unit---compared to the unique structure---immediately incurs cost (in the first or second scenario) as high as it cannot be compensated afterwards. 
Furthermore, we choose the arc cost (excluding integers $s_i$, $i\in [2n]$) sufficiently large, a multiple greater than the sum of all integers, i.e., $2w=\sum_{i=1}^{2n}s_i\leq nw \leq Fw$. 
As a result, the unique structure of an optimal robust flow is independent of the \RFPairPartition{} instance.
We note that the unique structure of an optimal robust flow meets the desired trade-off between the best and worst case of the first and second scenario's cost. 
If the integers included in the arc cost are neglected, the significantly higher remaining cost is equal in both scenarios of an optimal robust flow as shown in the following.
By Lemma~\ref{RFlem:HilfslemmaCostOfFixedArcsForOptimalSol}, the cost incurred on the fixed arcs is $c^{\fixhelp}$ in both scenarios.  
If the integers included in the cost of the free (detour and shortcut) arcs are neglected, the remaining cost incurred on the free arcs is $c^{\freehelp}_{\setminus \text{int}}:=nw(2^{n+1}-n-2)$ in both scenarios.
The $n$-times use of shortcut arcs in the second scenario (as $f^2(a) = 1$ for either $a=a^s_{2i-1}$ or $a=a^s_{2i}$ with $i\in [n]$) causes the same cost than the use of detour arc $a^d_2$ once, plus detour arc $a^d_4$ twice, and so on, plus the $n$-times use of detour arc $a^d_{2n}$ in the first scenario (as $f^1(a^d_{2i}) = i$ for $i\in [n]$), i.e., 
$$n\cdot Fw=nw(2^{n+1}-n-2) =1 \cdot 2^{n-1}nw + 2\cdot 2^{n-2}nw + \ldots  + n \cdot 2^0 nw.$$
This relation explains the value of parameter $F= 2^{n+1}-n-2$. 
We note that the multiplication of the powers of two by $n$ ensures the integrality of parameter $F$ and thus the integrality of the arc cost.
Overall, we obtain a lower bound on the cost of an optimal robust flow by $c(\fett{f})\geq c(f^\lambda)\geq c^{\fixhelp} + c^{\freehelp}_{\setminus \text{int}}$, $\lambda\in \Lambda$.

\subsubsection{Special case of unique source and single sinks networks}
\label{Subsec:UniqueSourceSingleSink}
In this section, we analyze the complexity of the \ProblemName{} problem for the special case of networks based on SP digraphs with a unique source and single sinks. 
Before performing a reduction from the weakly $\mathcal{NP}$-complete \RFPairPartition{} problem, we define the decision version of the \ProblemName{} problem as follows.
\begin{definition}[Decision Version \ProblemName{} problem]
The decision version of the \ProblemName{} problem asks whether a robust flow exists with cost at most $\beta \in \mathbb{Z}_{\geq 0}$.
\end{definition}
\begin{theorem}
The decision version of the \ProblemName{} problem is weakly $\mathcal{NP}$-complete for networks based on SP digraphs with a unique source and single sinks, even if only two scenarios are considered. 
\end{theorem}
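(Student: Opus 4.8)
The plan is to reduce the \RFPairPartition{} problem, which is weakly $\mathcal{NP}$-complete by Theorem~\ref{RF:NPCompletePairPartition}, to the decision version of the \ProblemName{} problem on SP digraphs with a unique source and single sinks, using precisely the maximum split instance constructed above. Membership in $\mathcal{NP}$ is immediate: an SP digraph is acyclic, so every feasible robust $\boldsymbol{b}$-flow has arc values bounded by the total supply and thus admits a polynomial-size encoding, and given such a flow one verifies the flow balance constraints, the consistent flow constraints, and the bound $\max_{\lambda\in\Lambda}c(f^\lambda)\le\beta$ in polynomial time.

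For the hardness part, let $\mathcal{I}$ be a \RFPairPartition{} instance with pairs $S_i=\{s_{2i-1},s_{2i}\}$, $i\in[n]$, and $\sum_{j=1}^{2n}s_j=2w$; we may assume $n\ge 2$, since the case $n=1$ is decided directly. I would construct in polynomial time the maximum split instance $\mathcal{I}_n=(G_n,c,\boldsymbol{b})$ of Section~\ref{Subsec:MSInstance}, which is based on an SP digraph with unique source $s$ and single (but distinct) sinks $t_1$ and $t_2$ in the two scenarios, and set the threshold $\beta := c^{\fixhelp}+c^{\freehelp}_{\setminus\text{int}}+w$, where $c^{\fixhelp}=nw(2^{n-1}n-2^n+1)$ as in Lemma~\ref{RFlem:HilfslemmaCostOfFixedArcsForOptimalSol} and $c^{\freehelp}_{\setminus\text{int}}=nw(2^{n+1}-n-2)$; this $\beta$ has polynomial encoding size in $\mathcal{I}$.

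The correctness of the reduction is read off from the structural results already established. By Lemmas~\ref{RFlem:BalanceConstraintsMaxSplitInstance}--\ref{RFlem:HilfslemmaFlowValueOnFixedArcsOfOptimalSol} together with the equal paths and one path of a pair properties, every optimal robust $\boldsymbol{b}$-flow $\boldsymbol{f}=(f^1,f^2)$ of $\mathcal{I}_n$ selects, for each pair $i\in[n]$, exactly one index $\sigma(i)\in\{2i-1,2i\}$ and routes one unit along $p^1_{\sigma(i)}$ in the first and along $p^2_{\sigma(i)}$ in the second scenario; conversely, every such one-out-of-each-pair selection $\sigma$ yields a feasible robust flow of this form. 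Writing $S^1:=\{s_{\sigma(i)}:i\in[n]\}$ and letting $S^2$ be its complement, condition (ii) of \RFPairPartition{} holds automatically, and decomposing each scenario cost along the chosen paths and invoking the path cost formulas — the telescoping $s$-terms along $p^1_{\sigma(i)}$ collapsing to $s_{\sigma(i)}$, and both the cross-arc contribution and the shortcut surcharge $Fw$ per path being selection-independent — gives $c(f^1)=c^{\fixhelp}+c^{\freehelp}_{\setminus\text{int}}+\sum_{s\in S^1}s$ and $c(f^2)=c^{\fixhelp}+c^{\freehelp}_{\setminus\text{int}}+\sum_{s\in S^2}s$, hence $c(\boldsymbol{f})=c^{\fixhelp}+c^{\freehelp}_{\setminus\text{int}}+\max\{\sum_{s\in S^1}s,\sum_{s\in S^2}s\}$. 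Since the $s_j$ are positive integers with $\sum_{s\in S^1}s+\sum_{s\in S^2}s=2w$, this maximum is at least $w$, with equality exactly when both sums equal $w$. Thus, if $\mathcal{I}$ is a Yes-instance, the flow induced by a valid partition $\sigma$ has cost exactly $\beta$; and if some robust flow has cost at most $\beta$, then an optimal one does, hence has the structure above and induces a selection $\sigma$ with $\max\{\sum_{s\in S^1}s,\sum_{s\in S^2}s\}\le w$, forcing $\sum_{s\in S^1}s=\sum_{s\in S^2}s=w$, so that $S^1,S^2$ witness that $\mathcal{I}$ is a Yes-instance.

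The main obstacle is not this final bookkeeping but the structural analysis it rests on: Lemmas~\ref{RFlem:BalanceConstraintsMaxSplitInstance}--\ref{RFlem:HilfslemmaFlowValueOnFixedArcsOfOptimalSol} and the equal/one-path-of-a-pair properties must constrain \emph{every} optimal robust flow tightly enough that the only remaining degree of freedom is the one-out-of-each-pair choice. In particular one must rule out flows that economize on cross (fixed) arcs by detouring through detour or shortcut arcs of smaller index, or that route through the blocked arc $a^b$ — which is exactly the role of the powers-of-two cost design and of choosing the multipliers $nw$ and $Fw$ so as to dominate $\sum_{j=1}^{2n}s_j=2w$. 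Once that reduction of the solution space is in hand, the choice of $\beta$ and the equivalence with \RFPairPartition{} follow as above.
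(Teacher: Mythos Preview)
Your proposal is correct and follows essentially the same route as the paper: reduce from \RFPairPartition{} via the maximum split instance with threshold $\beta=c^{\fixhelp}+c^{\freehelp}_{\setminus\text{int}}+w$, use Lemmas~\ref{RFlem:BalanceConstraintsMaxSplitInstance}--\ref{RFlem:HilfslemmaFlowValueOnFixedArcsOfOptimalSol} and the equal/one-path-of-a-pair properties to pin down the structure of any optimal robust flow, and then read off the partition from the selection $\sigma$. The paper carries out the same cost bookkeeping you sketch (deriving $c(f^1)=c^{\fixhelp}+c^{\freehelp}_{\setminus\text{int}}+\sum_{s\in S^1}s$ and $c(f^2)=c^{\fixhelp}+c^{\freehelp}_{\setminus\text{int}}+\sum_{s\in S^2}s$ via the telescoping path costs), and your observation that a flow of cost $\le\beta$ forces an \emph{optimal} flow of cost $\le\beta$, to which the structural lemmas then apply, is exactly the manoeuvre the paper uses in its converse direction.
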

\begin{proof}
Let $\mathcal{I}$ be a \RFPairPartition{} instance with positive integers $s_1,\ldots,s_{2n}$ partitioned in pairs $S_{i}=\{s_{2i-1},s_{2i}\}$ with $s_{2i-1} \geq s_{2i}$ for $i\in [n]$ such that $\sum_{j=1}^{2n} s_j = 2w$ holds.
We consider the corresponding maximum split instance $\mathcal{I}_n= (G_n=(V_n,A_n), c, \boldsymbol{b})$ as visualized in Figure~\ref{RFfig:killerProof}. 
In the following, we prove that $\mathcal{I}$ is a Yes-instance if and only if there exists a robust $\boldsymbol{b}$-flow for \ProblemName{} instance $\mathcal{I}_{n}$ with cost of at most
$\beta :=w+ c^{\fixhelp} + c^{\freehelp}_{\setminus \text{int}}= w + (2^{n-1}n-2^n+1+F)nw $.

Let $S^1$, $S^2$ be a feasible partition for instance $\mathcal{I}$. 
We determine a robust $\fett{b}$-flow $\fett{f}=(f^1,f^2)$ as follows.
For every integer pair $S_{i}=\{s_{2i-1}, s_{2i}\}$, $i\in [n]$ separated such that $s_{2i}\in S^1$ and $s_{2i-1}\in S^2$ hold, we define for all arcs $a\in A_n$ the flows 
\begin{align*}
f^1_{s_{2i}}(a)
= \left\{ \begin{array}{ll}
1		& \mbox{for $a\in A(p^1_{2i})$}, \\
0 		& \mbox{otherwise,}
\end{array} \right.
\hspace*{1cm}
f^2_{s_{2i}}(a)
= \left\{ 
\begin{array}{ll}
1 		& \mbox{for $a\in A(p^2_{2i})$},\\ 
0 		& \mbox{otherwise}.
\end{array} 
\right.
\end{align*}
For every integer pair $S_{i}=\{s_{2i-1}, s_{2i}\}$, $i\in [n]$ separated such that $s_{2i-1}\in S^1$ and $s_{2i}\in S^2$ hold, we define for all arcs $a\in A_n$ the flows 
\begin{align*}
f^1_{s_{2i-1}}(a)
= \left\{ \begin{array}{ll}
1		& \mbox{for $a\in A(p^1_{2i-1})$}, \\
0 		& \mbox{otherwise,}
\end{array} \right.
\hspace*{1cm}
f^2_{s_{2i-1}}(a)
= \left\{ \begin{array}{ll}
1 		& \mbox{for $a\in A(p^2_{2i-1})$}, \\ 
0 		& \mbox{otherwise}.
\end{array} \right.
\end{align*}
Using these flows, we construct the first and second scenario flow by $f^1=\sum_{s_i\in S^1}f^1_{s_i}$ and $f^2=\sum_{s_i\in S^1}f^2_{s_i}=\sum_{s_{2i-1}\in S^2}f^2_{s_{2i}} \linebreak+ \sum_{s_{2i}\in S^2}f^2_{s_{2i-1}}$, respectively.
As each integer pair $S_{i}=\{s_{2i-1},s_{2i}\}$, $i\in [n]$ is separated in partition $S^1$, $S^2$, flow $f^\lambda$, $\lambda\in \Lambda$ sends one unit along either path $p^\lambda_{2i}$ or $p^\lambda_{2i-1}$ for all $i\in [n]$. 
Overall, flows $f^1$ and $f^2$ send $n$ units each from the source to their sinks and their values are equal on the fixed arcs. 
Consequently, robust flow $\fett{f}=(f^1,f^2)$ is feasible as the flow balance and consistent flow constraints are satisfied. 
For computing the cost of flows $f^1$ and $f^2$, we use that set $S^1$ as well as set $S^2$ contains one integer of each pair $S_{i}=\{s_{2i-1},s_{2i}\}$, $i\in [n]$.
We obtain
\begin{align*}
	c(f^1) 
	&  = \sum_{s_i\in S^1}c(f^1_{s_i})
	   	= \sum_{s_i\in S^1} \left( nw \left(2^{n-\lceil \frac{i}{2}\rceil} + 2^{n-1}-1 \right) + s_{i} \right)
		= \sum_{s_i\in S^1}  s_{i} +  nw\sum_{i=1}^{n}  \left(2^{n-i} + 2^{n-1}-1 \right)\\
	&= w + (2^{n-1}n-n+2^n-1)nw 
	  = w + (2^{n-1}n-n+2^{n+1}-2^n-2+1)nw 
	  = w + (2^{n-1}n-2^n+1+F)nw\\
	 &= w + c^{\fixhelp} + c^{\freehelp}_{\setminus \text{int}}
\end{align*}
and
\begin{align*}
	c(f^2) 
	& = \sum_{s_{2i-1}\in S^2}c(f^2_{s_{2i}}) + \sum_{s_{2i}\in S^2}c(f^2_{s_{2i-1}})\\
	&=  \sum_{s_{2i-1}\in S^2} \left( nw (2^{i}-2)2^{n-i-1}  + s_{2i-1} + Fw \right) + \sum_{s_{2i}\in S^2} \left( nw (2^{i}-2)2^{n-i-1}  + s_{2i} + Fw \right)\\
	&=  \sum_{s_{i}\in S^2} s_{i} 
	+ nw \sum_{i=1}^{n}  \left( (2^{i}-2)2^{n-i-1} + Fw \right) \\
	&= w +(2^{n-1}n-2^n+1+F)nw\\
	&= w +  c^{\fixhelp}  +c^{\freehelp}_{\setminus \text{int}}. 
\end{align*}
We have constructed a robust $\boldsymbol{b}$-flow $\boldsymbol{f}= (f^1 , f^2 )$ for \ProblemName{} \mbox{instance $\mathcal{I}_n$} with cost $c(\boldsymbol{f}) = w +  c^{\fixhelp}  +c^{\freehelp}_{\setminus \text{int}} = \beta$.

Conversely, let $\boldsymbol{f}= (f^1 , f^2 )$ be a robust $\boldsymbol{b}$-flow for \ProblemName{} \mbox{instance $\mathcal{I}_n$} with cost $c(\boldsymbol{f})=\max\{c(f^1), c(f^2)\} \leq \beta$. 
We may assume that flow $\fett{f}$ satisfies the unique structure of an optimal robust $\fett{b}$-flow of the \mbox{maximum split instance $\mathcal{I}_n$}, where a lower bound on the cost is given by 
$c(\fett{f})\geq c^{\fixhelp} + c^{\freehelp}_{\setminus \text{int}}$. 
If flow $\fett{f}$ sent one unit differently (i.e., inevitably differently on the fixed arcs), the cost $c(\fett{f})$ would exceed the value $\beta = w + c^{\fixhelp} + c^{\freehelp}_{\setminus \text{int}} $. 
We note that the gap between lower bound $c^{\fixhelp} + c^{\freehelp}_{\setminus \text{int}}$ and upper bound $\beta$ on the cost of flow $\fett{f}$ is $w$.
In the following, we determine the cost of the scenario flows $f^1$ and $f^2$.  
Let $f^\lambda(p^\lambda_i)$ denote the value which flow $f^\lambda$, $\lambda\in \Lambda$ sends along path $p^\lambda_i$, $i\in [2n]$.
Using Lemma~\ref{RFlem:HilfslemmaFlowValueOnFixedArcsOfOptimalSol}, we obtain for the cost of the first scenario
\begin{align*}
	 c(f^1) 
	&= \sum_{a\in A_n} c(a) \cdot f^1(a) \\
	&= \sum_{i=1}^{2n-1} c(a^c_i) \cdot f^1(a^c_i) + \sum_{i=1}^{2n}c(a^d_i)\cdot f^1(a^d_i) \\
	& =
	\sum^{n-1}_{i=1} 2^{n-i-1}nw \cdot f^1(a^c_{2i}) + \sum_{i=1}^{n} (s_{2i-1} - s_{2i}) \cdot f^1(a^d_{2i-1}) +  
	\sum_{i=1}^{n-1} (s_{2i} - s_{2i+1} + 2^{n-i}nw) \cdot f^1(a^d_{2i}) \\  
	& \phantom{=} + (s_{2n} + 2^{0}nw) \cdot f^1(a^d_{2n})  \\
	& = \sum^{n-1}_{i=1} 2^{n-i-1}nw\cdot (n-i) + \sum_{i=1}^{n} (s_{2i-1} - s_{2i}) \cdot f^1(a^d_{2i-1}) + \sum_{i=1}^{n-1} (s_{2i} - s_{2i+1} + 2^{n-i}nw) \cdot i + (s_{2n} + 2^{0}nw) \cdot n\\
	& = \sum^{n-1}_{i=1} 2^{n-i-1}nw\cdot (n-i) + \sum_{i=1}^{n}  2^{n-i}nw \cdot i + \sum_{i=1}^{n} (s_{2i-1} - s_{2i}) \cdot f^1(a^d_{2i-1}) + \sum_{i=1}^{n-1} (s_{2i} - s_{2i+1} ) \cdot i + s_{2n}  \cdot n\\
	&= c^{\fixhelp} + c^{\freehelp}_{\setminus \text{int}}  + \sum_{i=1}^{n} (s_{2i-1} - s_{2i}) \cdot f^1(a^d_{2i-1}) + \sum_{i=1}^{n-1} (s_{2i} - s_{2i+1} ) \cdot i + s_{2n}  \cdot n.
\end{align*} 
Considering the last part of the expression above, we define $\alpha :=\sum_{i=1}^{n} (s_{2i-1} - s_{2i}) \cdot f^1(a^d_{2i-1}) + \sum_{i=1}^{n-1} (s_{2i} - s_{2i+1} ) \cdot i + s_{2n}  \cdot n$. 
Before simplifying term $\alpha$, we note the following. 
Due to the one path of a pair property, it holds $f^1(p^1_{2i-1}) = 1- f^1(p^1_{2i})\in \{0,1\}$ for all $i\in[n]$.
Furthermore, it holds $f^1(a^d_{2i-1})\in\{i-1,i\}$, where $f^1(a^d_{2i-1})=i-1$ if $f^1(p^1_{2i-1})=0$ and $f^1(a^d_{2i-1})=i$ if $f^1(p^1_{2i-1})=1$. 
Overall, we obtain $f^1(a^d_{2i-1})=i-1+ f^1(p^1_{2i-1})$ for $i\in [n]$.
Using this as well as Lemma~\ref{RFlem:HilfslemmaFlowValueOnFixedArcsOfOptimalSol} and the one path of a pair property, we perform the following transformation 
\begin{align*}
\alpha =&\sum_{i=1}^{n} (s_{2i-1} - s_{2i}) \cdot f^1(a^d_{2i-1}) + \sum_{i=1}^{n-1} (s_{2i} - s_{2i+1} ) \cdot i + s_{2n}  \cdot n \\
= &\sum_{i=1}^{n} (s_{2i-1} - s_{2i}) \cdot (i-1+ f^1(p^1_{2i-1}))+ \sum_{i=1}^{n-1} (s_{2i} - s_{2i+1} ) \cdot i + s_{2n}  \cdot n \\
= &\sum_{i=1}^{n} \left(s_{2i-1} \cdot f^1(p^1_{2i-1})  +  s_{2i} \cdot (1-f^1(p^1_{2i-1})) \right) \\
= &\sum_{i=1}^{n} \left(s_{2i-1} \cdot f^1(p^1_{2i-1})  +  s_{2i} \cdot f^1(p^1_{2i}) \right) \\
= &\sum_{i=1}^{2n} s_{i} \cdot f^1(p^1_{i}).
\end{align*}
Overall, we obtain
\begin{align*}
	c(f^1) = c^{\fixhelp} + c^{\freehelp}_{\setminus \text{int}} + \sum_{i=1}^{2n} s_{i} \cdot f^1(p^1_{i}).
\end{align*} 
Using Lemma~\ref{RFlem:HilfslemmaFlowValueOnFixedArcsOfOptimalSol}, the one path of a pair property, and the fact that $f^2(a^s_i) = f^2(p^2_i)$ holds for all $i\in [2n]$, we obtain for the cost of the second scenario
\begin{align*}
	 c(f^2) 
	&= \sum_{a\in A_n} c(a) \cdot f^2(a) \\
	&= \sum_{i=1}^{2n-1} c(a^c_i) \cdot f^2(a^c_i) + \sum_{i=1}^{2n}c(a^s_i)\cdot f^2(a^s_i) \\
	& = 
	\sum^{n-1}_{i=1} 2^{n-i-1}nw \cdot f^2(a^c_{2i}) + \sum_{i=1}^{n} \left( (s_{2i-1} + Fw) \cdot f^2(a^s_{2i})  +  (s_{2i}+Fw) \cdot f^2(a^s_{2i-1})\right)\\
	& = 
	\sum^{n-1}_{i=1} 2^{n-i-1}nw \cdot (n-i)+ n\cdot Fw + \sum_{i=1}^{n} \left( s_{2i-1}  \cdot f^2(p^2_{2i})  +  s_{2i} \cdot f^2(p^2_{2i-1})\right)\\
	&=
		c^{\fixhelp} + c^{\freehelp}_{\setminus \text{int}} + \sum_{i=1}^{n} \left( s_{2i-1}  \cdot f^2(p^2_{2i})  +  s_{2i} \cdot f^2(p^2_{2i-1})\right).
\end{align*} 
As $c(f^\lambda)\leq\beta=c^{\fixhelp} + c^{\freehelp}_{\setminus \text{int}}+w$ holds for $\lambda\in \Lambda$, we obtain
\begin{align}\label{boundOnIntegersFirstScenario}
	\sum_{i=1}^{2n} s_{i} \cdot f^1(p^1_{i})     \leq w
\end{align}
and
\begin{align}\label{boundOnIntegersSecondScenario}
\sum_{i=1}^{n}  \left( s_{2i-1}  \cdot f^2(p^2_{2i})  +  s_{2i} \cdot f^2(p^2_{2i-1})\right) 
\leq w. 
\end{align}
In the next step, we define a partition $S^1$, $S^2$ by
\begin{align*}
S^1 &= \{s_{2i-1}  \mid f^1(p^1_{2i-1}) = 1,\ i\in [n]\} \cup \{ s_{2i} \mid f^1(p^1_{2i})  = 1,\ i\in [n]\}, \\
S^2 &= \{s_{2i}  \mid  f^2(p^2_{2i-1}) = 1,\  i\in [n]\} \cup \{ s_{2i-1} \mid f^2(p^2_{2i}) = 1,\ i\in [n]\}.
\end{align*}
Due to the equal paths and the one path of a pair property, either $f^1(p^1_{2i-1})= f^2(p^2_{2i-1})=1$ or $f^1(p^1_{2i})= f^2(p^2_{2i})=1$ holds for all $i\in [n]$. 
For this reason, neither subset $S^1$ nor $S^2$ includes both integers of an integer pair. 
Instead, by definition of the sets, one integer of every integer pair $S_{i}=\{s_{2i-1},s_{2i}\}$, $i\in [n]$ is included in set $S^1$ and the other in set $S^2$. 
Consequently, partition $S^1$, $S^2$ contains the positive integers $s_1,\ldots,s_{2n}$, it is disjoint, and it separates all integer pairs.  
Expressions~\eqref{boundOnIntegersFirstScenario} and~\eqref{boundOnIntegersSecondScenario} are equivalent to expressions 
\begin{align*}
	\sum_{s_i\in S^1}s_i \leq w
	\text{\hspace*{0.25cm} and \hspace*{0.25cm}}
	\sum_{s_i\in S^2}s_i \leq w,
\end{align*} 
respectively. 
Thus, it holds
\begin{align*}
	\sum_{s_i\in S^1}s_i = \sum_{s_i\in S^2}s_i = w. 
\end{align*}
Finally, we have constructed a feasible partition $S^1$, $S^2$ for the \RFPairPartition{} instance $\mathcal{I}$. 
\end{proof}
As a result, we obtain the following corollary.
\begin{corollary}
	The decision version of the \ProblemName{} problem is weakly $\mathcal{NP}$-complete for networks based on SP digraphs with multiple sources and multiple sinks, even if only two scenarios are considered. \label{Cor:SPComplexity}
\end{corollary}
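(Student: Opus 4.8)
The corollary is an immediate consequence of the preceding theorem, and the plan is simply to make the containment of instance classes explicit. The \ProblemName{} problem on SP digraphs ``with multiple sources and multiple sinks'' is, by definition, the \ProblemName{} problem on SP digraphs \emph{without} any uniqueness restriction on the sources or sinks; requiring that every scenario possess a single source, that this source be common to all scenarios, and that every scenario possess a single sink is therefore a restriction of it, not an extension. In particular, every instance occurring in the reduction of the theorem---and most notably every maximum split instance $\mathcal{I}_n$, which is an SP digraph with source $s$ and scenario sinks $t_1$, $t_2$---is already a legitimate instance of the more general problem.

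First I would note that membership in $\mathcal{NP}$ is unaffected by the number of sources and sinks: given a candidate robust $\boldsymbol{b}$-flow $\boldsymbol{f}=(f^1,f^2)$, one verifies in polynomial time that each $f^\lambda$ satisfies its flow balance constraints, that the consistent flow constraints hold on all fixed arcs, and that $\max_{\lambda\in\Lambda}c(f^\lambda)\le\beta$. Then I would invoke the reduction from the \RFPairPartition{} problem established in the theorem without any modification: a \RFPairPartition{} instance $\mathcal{I}$ is a Yes-instance if and only if the associated maximum split instance $\mathcal{I}_n$ admits a robust $\boldsymbol{b}$-flow of cost at most $\beta=w+c^{\fixhelp}+c^{\freehelp}_{\setminus\text{int}}$. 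Since $\mathcal{I}_n$ is a two-scenario SP digraph instance lying in the broader class, the same reduction establishes $\mathcal{NP}$-hardness there; and because all numbers occurring in $\mathcal{I}_n$ are bounded polynomially (the largest being $2^{n+1}n^2w$) while the remaining data have polynomial size, the hardness is weak rather than strong.

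There is no genuine obstacle in this argument; the only point requiring attention is precisely the inclusion asserted above, namely that the instance family constructed in the theorem lies within the stated, less restrictive instance class---which it manifestly does.
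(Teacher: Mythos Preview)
Your argument is correct and matches the paper's own treatment: the paper states the corollary without proof, as an immediate consequence of the preceding theorem, relying precisely on the inclusion of instance classes that you make explicit.

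One small correction, though it does not affect the validity of the proof: your final sentence misidentifies the reason the hardness is only \emph{weak}. The number $2^{n+1}n^2 w$ is \emph{not} polynomially bounded in the input---it is exponential in $n$---and in any case, polynomially bounded numbers would point toward \emph{strong} hardness, not weak. The hardness is weak simply because the reduction is from the weakly $\mathcal{NP}$-complete \RFPairPartition{} problem; a reduction from a weakly hard problem cannot by itself certify strong hardness. Since the corollary only asserts weak $\mathcal{NP}$-completeness, your membership and hardness arguments already suffice; just drop or fix that last clause.
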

In the special case of a constant number of scenarios, we can solve the \ProblemName{} problem for networks based on SP digraphs with multiple sources and multiple sinks by the pseudo-polynomial-time algorithm presented in our previous work~\cite{buesing2020robust}. 
The algorithm is based on dynamic programming. 
The runtime of the algorithm depends on the arc capacities required as input. 
For the \ProblemName{} problem, we may set the capacity of every arc to the maximum total supply among all scenarios.

\subsection{Special case of unique source \& unique sink networks}
\label{Subsubsec:UniqueSourceUnqiueSink}
In this section, we analyze the complexity of the \ProblemName{} problem for the special case of networks based on SP digraphs with a unique source and a unique sink. 
We provide a polynomial-time algorithm based on the Algorithm 5.1 of our previous work~\cite{buesing2020robust}. 
Let $(G,c,\boldsymbol{b})$ be a \ProblemName{} instance on SP digraph $G=(V,A=\fix\cup\free)$ with unique source $s$ and unique sink $t$. 
Without loss of generality, we assume that the unique source and unique sink comply with the origin and target of SP digraph $G$, respectively.
Further, we assume that the scenarios are non-decreasingly ordered with respect to the supply of the unique source, i.e., $b^1(s)\leq \ldots \leq b^{|\Lambda|}(s)$. 

Before presenting the algorithm, we note the following on the basis of our previous work~\cite{buesing2020robust}. 
There exists a robust $\fett{b}$-flow that sends in each scenario $\lambda\in\Lambda$ the so-called excess supply $b^\lambda(s)-\min_{\lambda^{\prime}\in \Lambda}b^{\lambda^{\prime}}(s)= b^\lambda(s)-b^{1}(s)$ (or the number of units which the arc capacities allow) along a shortest path (with respect to arc cost $c$) in digraph $G-\fix$. 
As we consider an uncapacitated network, the total excess supply of each scenario can be sent along a shortest path in digraph $G-\fix$.  
Thus, Algorithm 5.1 of our previous work~\cite{buesing2020robust} reduces to the computation of two shortest paths---one in SP digraph $G$ and one in digraph $G-\fix$.
We obtain the following simplified algorithm. 
\begin{algorithm}[H]
	\caption{} \label{RFAlg:2ShortestPathForUniqueSourceUniqueSink}
	\begin{lyxlist}{Initialization:}
		\item [{Input:}] \ProblemName{} instance $(G,c,\boldsymbol{b})$ with unique source $s$, unique sink $t$ where $G=(V,A=\fix\cup\free)$ is an SP digraph
		\item [{Output:}] Robust minimum cost $\boldsymbol{b}$-flow $\boldsymbol{f}$
		\item [{Method:}]~ 	\end{lyxlist}
	\begin{algorithmic}[1]
		\State{Compute shortest $(s,t)$-path $p^{\freehelp}$ in digraph $G-\fix$} 
		\State{Compute shortest $(s,t)$-path $p^{\fixhelp}$ in SP digraph $G$} 
		\If{$c(p^{\fixhelp})< c(p^{\freehelp})$}
		\State{For every scenario $\lambda\in \Lambda$ determine the flow
			\begin{align}\label{RFequ:RobFlowOptUniqueSourceUniqueSink}
				f^\lambda(a) = \begin{cases}
					b^{1}(s)
					& \text{for arcs $a\in A(p^{\fixhelp})\setminus A(p^{\freehelp})$,} \\
					b^\lambda(s)-  b^{1}(s) 					
					& \text{for arcs $a\in A(p^{\freehelp})\setminus A(p^{\fixhelp})$},\\
					b^\lambda(s)				
					& \text{for arcs $a\in A(p^{\fixhelp}) \cap A(p^{\freehelp})$},\\
					0 & \text{otherwise}
				\end{cases}
			\end{align}}
		\Else 
		\State{For every scenario $\lambda\in \Lambda$ determine the flow
			\begin{align*}
				f^\lambda(a) = \begin{cases}
					 b^{\lambda}(s)
					& \text{for arcs $a\in A(p^{\freehelp})$,} \\
					0 & \text{otherwise}
				\end{cases}
			\end{align*}}
		\EndIf
		\Return Robust $\boldsymbol{b}$-flow $\boldsymbol{f}=(f^1,\ldots,f^{|\Lambda|})$
	\end{algorithmic}
\end{algorithm}
We obtain the polynomial-time solvability of the \ProblemName{} problem for networks based on SP digraphs with a unique source and a unique sink as shown in the following theorem. 
\begin{theorem}\label{RFthm:PolyAlgUniqueSourceUniqueSink}
	Let $(G,c,\boldsymbol{b})$ be a \ProblemName{} instance on SP digraph $G=(V,A=\fix\cup\free)$ with unique source $s$ and unique sink $t$. 
	Algorithm~\ref{RFAlg:2ShortestPathForUniqueSourceUniqueSink} computes an optimal robust $\fett{b}$-flow in $\mathcal{O}(|V|+|A|)$ time.
\end{theorem}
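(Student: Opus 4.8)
The plan is to establish correctness and runtime separately. For \emph{runtime}, the argument is immediate: computing a shortest $(s,t)$-path in an SP digraph and in the subgraph $G - \fix$ can each be done in linear time, since SP digraphs admit a linear-time shortest-path computation via a bottom-up traversal of the SP tree (which itself is built in linear time by Valdes et al.~\cite{valdes1982recognition}); alternatively, since all arc costs are nonnegative, one may invoke any linear-time shortest-path routine exploiting the acyclicity of $G$. Assembling the flow via~\eqref{RFequ:RobFlowOptUniqueSourceUniqueSink} or the else-branch touches each arc $O(1)$ times, so the total is $\mathcal{O}(|V|+|A|)$.

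For \emph{correctness}, I would first check feasibility of the output, then optimality. Feasibility: in the if-branch, each $f^\lambda$ routes $b^1(s)$ units along $p^{\fixhelp}$ and the excess $b^\lambda(s)-b^1(s)\ge 0$ along $p^{\freehelp}$; since both are $(s,t)$-paths, flow conservation holds at every internal vertex and the net outflow at $s$ is $b^\lambda(s)$, matching the balance. The flow is integral because all balances are integral and we route integer amounts along fixed paths. The consistent flow constraints hold because $p^{\freehelp}$ lies in $G-\fix$, hence every fixed arc lies only on $p^{\fixhelp}$ (or on neither path), where every scenario carries exactly $b^1(s)$ units; so $f^\lambda(a)=b^1(s)$ for all $\lambda$ on each fixed arc. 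The else-branch is even simpler: $p^{\freehelp}$ contains no fixed arc, so the consistent flow constraints are vacuously satisfied, and each scenario sends its full supply along one path.

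For \emph{optimality}, I would invoke the third structural remark recalled at the end of Section~\ref{Sec:Problemdef} (from~\cite{buesing2020robust}): when there is a unique source and unique sink, the cost of an optimal robust flow equals the maximum over the two extreme scenarios — those of maximum and minimum supply. Since the scenarios are ordered $b^1(s)\le\dots\le b^{|\Lambda|}(s)$, this is $\max\{c(f^1), c(f^{|\Lambda|})\}$. The key observation is that, because fixed arcs must carry the \emph{same} amount in every scenario, that common amount is bounded by $b^1(s)$ (the smallest total throughput), so in the best case the scenario-$1$ flow is routed entirely on a cheapest $(s,t)$-path $p^{\fixhelp}$ of $G$ at cost $b^1(s)\cdot c(p^{\fixhelp})$, while the remaining $b^\lambda(s)-b^1(s)$ units of any other scenario can only use free arcs and are cheapest on $p^{\freehelp}$. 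If $c(p^{\fixhelp})<c(p^{\freehelp})$, pushing the shared $b^1(s)$ units along $p^{\fixhelp}$ and the excess along $p^{\freehelp}$ is optimal; if $c(p^{\fixhelp})\ge c(p^{\freehelp})$, there is no advantage to using any fixed arc at all, and sending every scenario's entire supply along $p^{\freehelp}$ at cost $b^\lambda(s)\cdot c(p^{\freehelp})$ is optimal. In both cases the output of Algorithm~\ref{RFAlg:2ShortestPathForUniqueSourceUniqueSink} attains the lower bound dictated by the extreme scenarios.

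The main obstacle I anticipate is making the optimality argument fully rigorous rather than intuitive — specifically, showing that restricting attention to flows of the two-path form in~\eqref{RFequ:RobFlowOptUniqueSourceUniqueSink} loses no generality. This requires a path-decomposition argument: decompose each scenario flow into $s$--$t$ paths; the fixed arcs carry a common multiset of flow across scenarios, and one argues that replacing the fixed-arc portion by $b^1(s)$ copies of the cheapest $(s,t)$-path in $G$ and the free-arc remainder by copies of the cheapest $(s,t)$-path in $G-\fix$ does not increase any scenario's cost while preserving feasibility and integrality. Here the uncapacitated setting is essential: there is no capacity constraint obstructing the re-routing, and the relevant statement is already packaged in the cited result of~\cite{buesing2020robust}, so I would lean on that rather than reprove it from scratch.
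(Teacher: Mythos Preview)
Your proposal is correct and aligns with the paper's own proof: both establish the runtime via linear-time shortest paths in acyclic digraphs, and both defer correctness to the prior work~\cite{buesing2020robust} (the paper by invoking Algorithm~5.1 there with capacities set to $b^{|\Lambda|}(s)$, you by sketching the path-decomposition argument and then leaning on the same reference). Your version is more explicit about feasibility and the shape of the optimality argument, but the substance is the same.
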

\begin{proof}
The correctness of Algorithm~\ref{RFAlg:2ShortestPathForUniqueSourceUniqueSink} follows from the correctness of the algorithm of our previous work~\cite{buesing2020robust} with arc capacities set to the maximum supply among all scenarios, i.e., $u\equiv b^{|\Lambda|}(s)$. 
As computing shortest paths in acyclic digraphs can be done in $\mathcal{O}(|V|+|A|)$~\cite{cormen2022introduction}, we obtain the polynomial runtime of Algorithm~\ref{RFAlg:2ShortestPathForUniqueSourceUniqueSink}.
\end{proof}
For an alternative algorithm, we refer to our preliminary version of this paper~\cite{busing2022complexity}. 
The alternative algorithm provides further insights into the \ProblemName{} problem for networks based on SP digraphs with a unique source and a unique sink.
It uses the SP structure to shrink the SP digraph to only one multi-arc on which the \ProblemName{} problem is solved.
Before concluding this section, we present a new result about the structure of an existing optimal robust flow, which we use in the next section. 
\begin{lemma}\label{ClaimPropertyCheapestFlowUniqueSourceAndSink}
	Let $\mathcal{I}=(G,c,\boldsymbol{b})$ be a \ProblemName{} instance on SP digraph $G=(V,A=\fix\cup\free)$ with unique source $s$ and unique sink $t$. 	
	There exists an optimal robust $\boldsymbol{b}$-flow $\fett{\tilde{f}}=(\tilde{f}^1, \ldots, \tilde{f}^{|\Lambda|})$ such that for all feasible robust $\boldsymbol{b}$-flows $\boldsymbol{f}=(f^1,\ldots, f^{|\Lambda|})$ it holds
	$c(\tilde{f}^\lambda)\leq c(f^\lambda)$ for all $\lambda\in \Lambda$.
\end{lemma}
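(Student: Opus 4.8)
The plan is to show that the robust flow $\boldsymbol{\tilde f}$ returned by Algorithm~\ref{RFAlg:2ShortestPathForUniqueSourceUniqueSink} already has the asserted domination property. By Theorem~\ref{RFthm:PolyAlgUniqueSourceUniqueSink} this flow is an optimal robust $\boldsymbol b$-flow, so it suffices to prove, for every feasible robust $\boldsymbol b$-flow $\boldsymbol f$ and every scenario $\lambda$, that $c(\tilde f^\lambda)\le c(f^\lambda)$. First I would record the exact per-scenario cost of $\boldsymbol{\tilde f}$. Write $p^{\fixhelp}$ and $p^{\freehelp}$ for the two shortest paths computed by the algorithm. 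Since every $(s,t)$-path of $G-\fix$ is also an $(s,t)$-path of $G$, we always have $c(p^{\fixhelp})\le c(p^{\freehelp})$; hence the \textsf{else}-branch only triggers when $c(p^{\fixhelp})=c(p^{\freehelp})$. Evaluating the cost of the flow in~\eqref{RFequ:RobFlowOptUniqueSourceUniqueSink} (grouping the arcs of $p^{\fixhelp}\cap p^{\freehelp}$ appropriately) and of the \textsf{else}-branch flow, and using $b^1(s)\le\dots\le b^{|\Lambda|}(s)$, one gets uniformly $c(\tilde f^\lambda)=b^1(s)\,c(p^{\fixhelp})+\bigl(b^\lambda(s)-b^1(s)\bigr)\,c(p^{\freehelp})$ for every $\lambda\in\Lambda$.

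The heart of the argument is the structural claim that \emph{for every feasible robust $\boldsymbol b$-flow $\boldsymbol f$ and every scenario $\mu$, the flow $f^\mu$ admits a decomposition into $b^\mu(s)$ unit $s$–$t$ paths of which at most $b^1(s)$ contain a fixed arc.} I would prove this by induction along the SP decomposition tree of $G$, carrying the stronger statement that for every SP subgraph $H$ with origin $o_H$ and target $q_H$ and every scenario $\mu$, the sub-flow $f^\mu|_H$ (which is an $o_H$–$q_H$ flow of some value $\beta^\mu_H$) decomposes into unit $o_H$–$q_H$ paths with at most $\min_{\nu\in\Lambda}\beta^\nu_H$ of them using a fixed arc. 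For a single arc this is immediate: if the arc is fixed, the consistent flow constraints force $\beta^\mu_H=\beta^\nu_H$ for all $\nu$, so $\beta^\mu_H=\min_\nu\beta^\nu_H$. For a series composition $H=H_1\cdot H_2$ every unit of $f^\mu|_H$ traverses $H_1$ and then $H_2$, so $\beta^\nu_{H_1}=\beta^\nu_{H_2}=\beta^\nu_H$ for all $\nu$; gluing an inductively chosen decomposition of $f^\mu|_{H_1}$ to one of $f^\mu|_{H_2}$ and pairing fixed-using halves of the two sides against each other as much as possible gives a decomposition of $f^\mu|_H$ whose fixed-using count equals the maximum of the two halves' counts, hence at most $\min_\nu\beta^\nu_H$. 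For a parallel composition $H=H_1\,\|\,H_2$ the $o_H$–$q_H$ paths through $H_1$ and through $H_2$ are arc-disjoint, so the fixed-using count is the sum of the two counts, which is at most $\min_\nu\beta^\nu_{H_1}+\min_\nu\beta^\nu_{H_2}\le\min_\nu\bigl(\beta^\nu_{H_1}+\beta^\nu_{H_2}\bigr)=\min_\nu\beta^\nu_H$. Taking $H=G$ yields the claim, since then $\min_\nu\beta^\nu_G=\min_\nu b^\nu(s)=b^1(s)$.

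Given such a decomposition of $f^\lambda$, let $k\le b^1(s)$ be its number of fixed-using paths. Each of these is an $(s,t)$-path of $G$ and hence costs at least $c(p^{\fixhelp})$, while each of the remaining $b^\lambda(s)-k$ paths lies in $G-\fix$ and hence costs at least $c(p^{\freehelp})$; therefore $c(f^\lambda)\ge k\,c(p^{\fixhelp})+\bigl(b^\lambda(s)-k\bigr)\,c(p^{\freehelp})$. Because $c(p^{\fixhelp})\le c(p^{\freehelp})$, the right-hand side is non-increasing in $k$, so with $k\le b^1(s)$ it is at least $b^1(s)\,c(p^{\fixhelp})+\bigl(b^\lambda(s)-b^1(s)\bigr)\,c(p^{\freehelp})=c(\tilde f^\lambda)$. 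As this holds for every $\lambda\in\Lambda$ and every feasible $\boldsymbol f$, the flow $\boldsymbol{\tilde f}$ is the desired one.

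The main obstacle is the structural claim in the second paragraph: turning the informal idea ``the fixed arcs carry the common $b^1(s)$ units while the per-scenario excess travels only on free arcs'' into a proof requires the induction over the SP structure, and the induction hypothesis must be phrased with $\min_{\nu}\beta^\nu_H$ rather than with scenario $1$, because at an inner node of the decomposition tree the scenario of globally smallest supply need not be the one routing the least flow through that particular subgraph. Everything else is elementary shortest-path bookkeeping, and the two easy remaining points (the scenario with minimum supply trivially satisfies $c(\tilde f^1)=b^1(s)\,c(p^{\fixhelp})\le c(f^1)$, and the case of all supplies being equal) fall out of the same computation.
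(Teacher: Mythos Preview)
Your proof is correct and follows the same overall strategy as the paper: take the flow $\boldsymbol{\tilde f}$ from Algorithm~\ref{RFAlg:2ShortestPathForUniqueSourceUniqueSink}, compute its per-scenario cost as $b^1(s)\,c(p^{\fixhelp})+(b^\lambda(s)-b^1(s))\,c(p^{\freehelp})$, take a path decomposition of an arbitrary competitor $f^\lambda$, and lower-bound each path by $c(p^{\fixhelp})$ or $c(p^{\freehelp})$ according to whether it meets a fixed arc.

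Where you differ is in the justification of the key structural step. The paper simply asserts that ``every scenario flow $f^\lambda$ of a feasible robust $\boldsymbol b$-flow sends at least $b^\lambda(s)-b^1(s)$ units in digraph $G-\fix$ due to the consistent flow constraints'' and applies this to a path decomposition chosen in advance. But this is not true of \emph{every} path decomposition: for instance, in the series composition of two parallel pairs (one fixed, one free arc each), with $b^1(s)=1$, $b^2(s)=3$, and one unit on each fixed arc, the decomposition $\{$fix--free, free--fix, free--free$\}$ of $f^2$ has two fixed-using paths. Your SP induction---with the invariant phrased via $\min_{\nu}\beta^\nu_H$ and the explicit pairing in the series case---constructs a decomposition with the right bound rather than assuming one, and is the cleaner way to close this step. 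The two arguments buy the same conclusion, but yours actually proves the structural claim the paper leans on.
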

\begin{proof}
	Let $p^{\fixhelp}$ and $p^{\freehelp}$ be shortest $(s,t)$-paths in digraphs $G$ and $G-\fix$, respectively. 
	Without loss of generality, we assume  that $c(p^{\fixhelp})< c(p^{\freehelp})$ holds. 
	Let an optimal robust $\fett{b}$-flow $\fett{\tilde{f}}=(\tilde{f}^1,\ldots,\tilde{f}^{|\Lambda|})$ be given as determined in expression~\eqref{RFequ:RobFlowOptUniqueSourceUniqueSink}. 
	We claim that for all feasible robust $\boldsymbol{b}$-flows $\fett{f}=(f^1, \ldots, f^{|\Lambda|})$ it holds $c(\tilde{f}^\lambda)\leq c(f^\lambda)$ for all $\lambda\in \Lambda$.
	Assume this does not hold true, i.e., there exists a feasible robust $\boldsymbol{b}$-flow $\fett{\hat{f}}=(\hat{f}^1, \ldots, \hat{f}^{|\Lambda|})\neq \fett{\tilde{f}}$ such that $c(\hat{f}^{\lambda^{\prime}})< c(\tilde{f}^{\lambda^{\prime}})$ holds for at least one scenario $\lambda^{\prime}\in \Lambda$.
	Let $\mathcal{P}=(p_1,\ldots,p_k)$ be an $(s,t)$-path decomposition of flow $\hat{f}^{\lambda^{\prime}}$, i.e., it holds $\hat{f}^{\lambda^{\prime}} = \sum_{i=1}^{k}\hat{f}^{\lambda^{\prime}}_{|p_i}$ where $\hat{f}^{\lambda^{\prime}}_{|p_i}$ indicates the part of flow $\hat{f}$ restricted to path $p_i$. 
	We assume that the paths are ordered such that the first $r$ paths may contain fixed and free arcs and the last $k-r$ paths contain free arcs only, i.e., $p_1,\ldots,p_r \subseteq G$ and $p_{r+1},\ldots,p_{k} \subseteq G-\fix$. 
	We note that $c(p^{\fixhelp})\leq c(p_i)$ holds for all $i\in [r]$ and $c(p^{\freehelp})\leq c(p_j)$ holds for all $j\in \{r+1,\ldots,k\}$. 
	Every scenario flow $f^\lambda$, $\lambda\in \Lambda$ of a feasible robust $\fett{b}$-flow $\fett{f}$ sends at least $b^\lambda(s)-b^1(s)$ units in digraph $G-\fix$ due to the consistent flow constraints.  
	This means that the value $\sum_{j=r+1}^{k} \hat{f}^{\lambda^{\prime}}(p_j)$ of the flow $\sum_{j=r+1}^{k} \hat{f}^{\lambda^{\prime}}_{|p_j}$ is at least $b^{\lambda^{\prime}}(s)-b^1(s)$. 
	If the value of the flow $\sum_{j=r+1}^{k} \hat{f}^{\lambda^{\prime}}_{|p_j}$ is greater than or equal to $b^{\lambda^{\prime}}(s)-b^1(s)$, the value of flow $\sum_{i=1}^{r} \hat{f}^{\lambda^{\prime}}_{|p_i}$ is less than or equal to $b^1(s)$, respectively. 
	Using the $(s,t)$-path decomposition, we obtain a lower bound on the cost of flow $\hat{f}^{\lambda^{\prime}}$ as follows
	\begin{align*}
		c(\hat{f}^{\lambda^{\prime}}) 
		&= \sum_{p\in \mathcal{P}} c(p)\cdot \hat{f}^{\lambda^{\prime}}(p)\\
		&= \sum_{i=1}^{r} c(p_i)\cdot \hat{f}^{\lambda^{\prime}} (p_i) + \sum_{j=r+1}^{k} c(p_j)\cdot \hat{f}^{\lambda^{\prime}}(p_j) \\
		&\stackrel{(1)}{\geq} c(p^{\fixhelp}) \cdot \sum_{i=1}^{r} \hat{f}^{\lambda^{\prime}}(p_i) + c(p^{\freehelp}) \cdot \sum_{j=r+1}^{k} \hat{f}^{\lambda^{\prime}}(p_j)\\
		&\stackrel{(2)}{\geq} c(p^{\fixhelp})\cdot b^1(s) + c(p^{\freehelp}) \cdot (b^{\lambda^{\prime}}(s)-b^1(s))\\
		&=c(\tilde{f}^{\lambda^{\prime}}). 
	\end{align*}
	As we consider a robust $\fett{b}$-flow $\fett{\hat{f}}\neq \fett{\tilde{f}}$, at least one of the inequalities $(1)$, $(2)$ is strict.
	We obtain a contradiction to the assumption. 
\end{proof}
\subsection{Special cases of unique source and parallel sinks \& parallel sources and unique sink networks}
\label{Subsec:UniqueSourceParallelSinks}
In this section, we analyze the complexity of the \ProblemName{} problem for the special case of networks based on SP digraphs with a unique source and parallel (multiple) sinks. 
First, we introduce terms and notations.  
Subsequently, we present structural results for a feasible robust flow arising from the considered special case.  
Finally, based on these structural results, we provide a polynomial-time algorithm. 
By redirecting the arcs and identifying the source as sink and the sinks as sources, the results can be analogously applied for the special case of networks based on SP digraphs with parallel (multiple) sources and a unique sink. 

Let $(G,c,\boldsymbol{b})$ be a \ProblemName{} instance on SP digraph $G$ with a unique source and parallel sinks. 
Without loss of generality, we assume that the unique source, denoted by $s$, complies with the origin $o_G$ of the SP digraph $G$. 
Let $\gamma_t(\lambda)$ indicate the total number of sinks in scenario $\lambda\in \Lambda$. 
The $j$-th sink of scenario $\lambda\in \Lambda$ is specified by $t^\lambda_{j}$ with $j\in [\gamma_t(\lambda)]$.
We say a subgraph $G_{vw}\subseteq G$ is \textit{spanned} by two vertices $v,w\in V(G)$ (for which a $(v,w)$-path exists) if subgraph $G_{vw}$ is induced by vertices reachable from vertex $v$ and from which vertex $w$ is reachable, i.e., 
$V(G_{vw})= \{ x \in V(G)\mid \text{there exist a } (v,x)\text{- and a } \linebreak[1](x,w)\text{-path}  \}.$ 
A spanned subgraph $G_{vw}$ of an SP digraph is an SP digraph itself with origin $v$ and target $w$ as shown in Lemma~\ref{RFlem:SubgraphOfSPGraphsDefinedByTwoVertices} in Appendix~\ref{RF:AppendixD}.

In the next step, we define \textit{vertex labels} $\pi:V(G) \rightarrow 2^{V(G)}$ that indicate at every vertex $v\in V(G)$ the reachable sinks, i.e., $\pi (v) = \{ t \in V(G)\mid \text{$t$ is a sink and there exists a $(v,t)$-path} \}$.
As all vertices are reachable from the origin in an SP digraph, the label of the unique source is given by the set of all sinks, i.e., $\pi(s)= \{t^{\lambda}_{i} \mid i\in [\gamma_t(\lambda)], \lambda\in \Lambda \}$.
We note that the labels form inclusion chains from the unique source $s$ to the sinks $t^\lambda_i$, $i\in [\gamma_t(\lambda)]$, $\lambda\in \Lambda$. 
We say two labels $\pi(v),\pi(w)$ of vertices $v,w\in V(G)$ are \textit{$(v,w)$-consecutive} if $\pi(w)\subsetneq \pi(v)$ holds and there does not exist a vertex $z\in V(G)$ with label $\pi(z)$ such that $\pi(w)\subsetneq\pi(z)\subsetneq \pi(v)$ holds. 
Furthermore, we say a vertex $v\in V(G)$ is a \textit{last vertex} (of label $\pi(v)$) if 
\begin{itemize}
	\item[(i)] its label $\pi(v)$ is nonempty,  
	\item[(ii)] for each vertex $w\in V(G)$ for which an arc $a=(v,w)\in A(G)$ exists the labels $\pi(v)$, $\pi (w)$ are $(v,w)$-consecutive. 
\end{itemize} 
As the labels are inclusion chains, a last vertex $v\in V(G)$ with label $\pi(v)$ implies that for each vertex $w\in V(G)$ for which a $(v,w)$-path exists it holds $\pi(w)\subsetneq \pi(v)$. 
A last vertex of a label is uniquely determined as shown in the following lemma. 
\begin{lemma}\label{RFlem:OneLastVertexOfEveryLabel}
	Let $(G,c,\boldsymbol{b})$ be a \ProblemName{} instance on SP digraph $G$ with a unique source and parallel sinks. For each label, there is a uniquely determined last vertex.
\end{lemma}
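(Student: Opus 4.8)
The plan is to argue by contradiction: suppose a fixed label $L = \pi(v)$ admits two distinct last vertices $v$ and $v'$. The key is to exploit the SP structure together with the fact that the labels form inclusion chains. First I would note that, since $L$ is nonempty, $L$ must contain at least one sink $t$, and both $v$ and $v'$ lie on some $(s,t)$-path; in particular every vertex of $L$ is reachable from both $v$ and $v'$, and both $v,v'$ are reachable from the source $s$. My first step is therefore to establish that $v$ and $v'$ are \emph{comparable} in the reachability order of $G$, i.e.\ there is a $(v,v')$-path or a $(v',v)$-path. This is where the series-parallel structure enters: in an SP digraph, two vertices that both lie on a common $(o_G, q_G)$-path need not be comparable in general, but two vertices that both lie on a common $(s,t)$-path \emph{and} from which the same set of sinks is reachable must be — I would prove this using the spanned subgraph $G_{vt}$ (an SP digraph with origin $v$ by Lemma~\ref{RFlem:SubgraphOfSPGraphsDefinedByTwoVertices}) and induction on the SP decomposition of $G$, isolating the innermost parallel composition that separates $v$ from $v'$.

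Once comparability is in hand, assume without loss of generality that a $(v,v')$-path $p$ exists. Since $v$ is a last vertex, condition (ii) forces $\pi(v') \subsetneq \pi(v) = L$ whenever $v'$ is reached from $v$ along a path — more precisely, the labels along any path leaving $v$ strictly decrease, because any out-neighbour $w$ of $v$ has $\pi(w) \subsetneq \pi(v)$, and the inclusion-chain property propagates this strict drop to every vertex reachable from $w$, hence to $v'$. But $\pi(v') = L = \pi(v)$ by assumption, a contradiction. The symmetric case (a $(v',v)$-path) is handled identically by swapping the roles of $v$ and $v'$. Existence of a last vertex for each nonempty label is the easy direction: starting from any vertex $x$ with $\pi(x) = L$, repeatedly move to an out-neighbour whose label is still $L$ (if one exists); since $G$ is acyclic this terminates at a vertex all of whose out-neighbours have strictly smaller label, which is by definition a last vertex of $L$.

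The main obstacle I anticipate is the comparability step: showing that two vertices with the same (nonempty) label are necessarily on a common directed path. The danger is a configuration where $v$ and $v'$ sit in two parallel branches of some parallel composition, both branches eventually reaching the same set of sinks. I would rule this out by descending the SP tree to the unique $P$-vertex whose two child subgraphs $G_1, G_2$ contain $v$ and $v'$ respectively; then $v$ reaches $q_{G_1} = q_{G_2}$ (the shared target of that parallel composition) and so does $v'$, but the sinks reachable from $v$ that lie strictly inside $G_1$ (if any) are not reachable from $v'$, and vice versa — forcing $\pi(v) \neq \pi(v')$ unless all sinks in $L$ lie \emph{outside} that parallel composition, i.e.\ are reachable only past the shared target $q_{G_1}$; but in that case $q_{G_1}$ itself (or a vertex beyond it) would have label $L$, contradicting that $v$ is a last vertex since $\pi$ would not strictly drop from $v$ to $q_{G_1}$. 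This case analysis, carried out carefully over the recursive SP construction, is the technical heart of the argument; the rest is bookkeeping with the inclusion-chain property.
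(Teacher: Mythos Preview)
Your uniqueness argument is correct and follows a somewhat different route from the paper's. Both reduce to ruling out the case that two last vertices $v,v'$ with the same label $L$ are incomparable, since comparability immediately contradicts the last-vertex property. The paper locates in the SP tree the deepest $P$-vertex $x$ whose subgraph $G_x$ contains \emph{all sinks of $L$}, and argues that its origin $o_{G_x}$ is the sole access point to those sinks, so that $v$ and $v'$ must both coincide with $o_{G_x}$. You instead locate the $P$-vertex whose parallel branches \emph{separate $v$ from $v'$} and argue directly: any sink of $L$ lying strictly inside one branch is unreachable from the other, so every sink of $L$ lies at or past the shared target $q$, giving $\pi(q)=L$ --- which contradicts $v$ being last, since $v$ reaches $q$ along a nontrivial path. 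Your route avoids the case split on $|L|$ and the auxiliary construction of $o_{G_x}$; the paper's route is more constructive in that it actually exhibits the last vertex.

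Your existence sketch, however, has a gap. Walking forward to a vertex all of whose out-neighbours have strictly smaller label yields a \emph{maximal} vertex, but the definition of last vertex also requires each out-neighbour's label to be \emph{consecutive} with $L$: no vertex $z$ anywhere in $G$ may satisfy $\pi(w)\subsetneq\pi(z)\subsetneq L$. This is not ``by definition''; it needs the further (non-obvious) fact that in an SP digraph with parallel sinks a vertex cannot have two out-neighbours whose nonempty labels are strictly nested --- which again comes from a separating-$P$-vertex argument together with the parallel-sinks hypothesis. Note that the paper's own proof establishes only uniqueness (it reads ``the statement is false'' as ``there exist at least two last vertices''), so your gap lies in territory the paper itself does not enter.
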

\begin{proof}
	Firstly, we note that the label of each sink contains only the sink itself.
	Clearly, for the label of a sink, the uniquely determined last vertex is the sink itself. 
	As the labels of the sinks are the only one element labels, the statement is true for all labels with cardinality one.  
	Assume now the statement is false for a label of cardinality at least two.
	There exist at least two last vertices $v,w\in V(G)$ of the same label $\pi(v)=\pi(w)$ with $|\pi(v)|=|\pi(w)|\geq2$. 
	The vertices are parallel, otherwise we immediately obtain a contradiction to the definition of a last vertex.
	If we consider an SP tree $T$ of SP digraph $G$, we see that there exist $P$-vertices by which the sinks $t^\lambda_i$, $i\in [\gamma_t(\Lambda)]$, $\lambda\in\Lambda$ are composed in parallel. 
	We note that if two subgraphs, each containing one sink, are composed in parallel, the two sinks may not be the origin or target of the composition.
	Otherwise, there exists a path between two sinks which contradicts the definition of parallel sinks. 	
	This means that two arcs, where each arc is adjacent to a sink, cannot be straightly composed in parallel but that a prior series composition is required. 
	There exists a $P$-vertex $x\in V(T)$ whose associated subgraph $G_x\subseteq G$ is the first subgraph that contains the parallel sinks of set $\pi(v)= \pi(w)$. 
	More precisely, for $x\in V(T)$ it holds $\pi(v)=\pi(w)\subseteq V(G_x)$ but for all descendants $y\in V(T)$ of vertex $x$ it holds $\pi(v)=\pi(w) \not \subseteq V(G_y)$. 
	Furthermore, we note that none of the sinks complies with the origin and target of subgraph $G_x$, i.e., $ t^{\lambda}_i \neq o_{G_x}$ and $t^{\lambda}_i \neq q_{G_x}$, $i\in [\gamma_t(\lambda)]$, $\lambda\in \Lambda$. 
	An example is visualized in Figure~\ref{fig:LastVertex1}.
	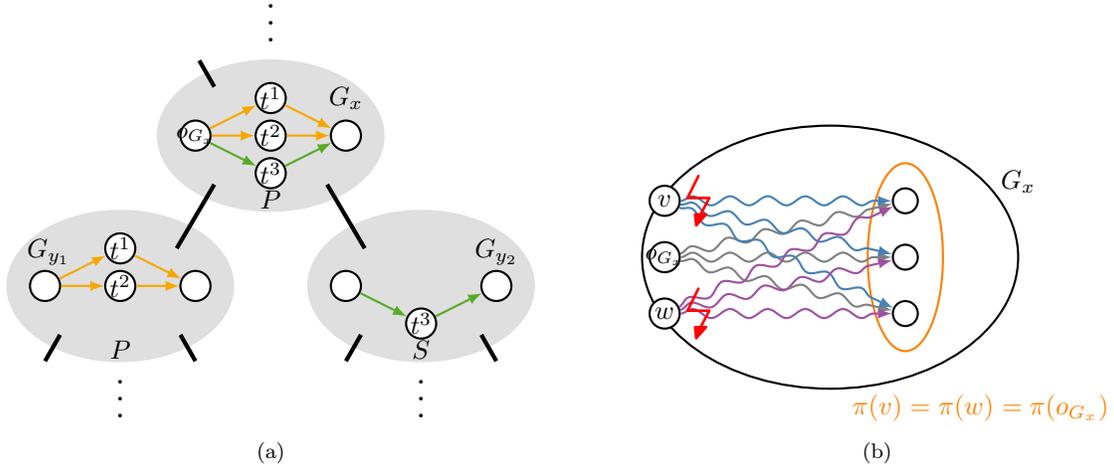
\begin{figure}
		\centering
		\begin{adjustbox}{max width=\textwidth}
			\subfloat[]{\label{fig:LastVertex1}	\centering\scalebox{1}{\begin{tikzpicture}
%
%

	\Ellipse{1,0}{1.5}{1}{thick, fill=lightgray!50, draw=lightgray!50}

	\Knoten{(1)}{(0,0)}{\scalebox{0.9}{$o_{G_x}$}}{thick, minimum size=0.4cm}
	\Knoten{(2)}{(2,0)}{}{thick, minimum size=0.4cm}
	\Knoten{(3)}{(1,0.5)}{$t^1$}{thick, minimum size=0.4cm}
	\Knoten{(4)}{(1,-0)}{$t^2$}{thick, minimum size=0.4cm}
	\Knoten{(5)}{(1,-0.5)}{$t^3$}{thick, minimum size=0.4cm}

	\draw[thick, -latex, rwth-orange] (1) -- (3); 
	\draw[thick, -latex, rwth-orange] (3) -- (2); 
	\draw[thick, -latex, rwth-orange] (1) -- (4); 
	\draw[thick, -latex, rwth-orange] (4) -- (2); 
	\draw[thick, -latex, rwth-green] (1) -- (5); 
	\draw[thick, -latex, rwth-green] (5) -- (2);

	\begin{scope}[shift={(-2,-2)}]
		\Ellipse{1,0}{1.5}{1}{thick, fill=lightgray!50, draw=lightgray!50}
				
		\Knoten{(1)}{(0,0)}{}{thick, minimum size=0.4cm}
		\Knoten{(2)}{(2,0)}{}{thick, minimum size=0.4cm}
		\Knoten{(3)}{(1,0.5)}{$t^1$}{thick, minimum size=0.4cm}
		\Knoten{(4)}{(1,-0)}{$t^2$}{thick, minimum size=0.4cm}
		
		\draw[thick, -latex, rwth-orange] (1) -- (3); 
		\draw[thick, -latex, rwth-orange] (3) -- (2); 
		\draw[thick, -latex, rwth-orange] (1) -- (4); 
		\draw[thick, -latex, rwth-orange] (4) -- (2); 
		\node at (1,-0.85){$P$}; 
	\end{scope}

	\begin{scope}[shift={(2,-2)}]
		\Ellipse{1,0}{1.5}{1}{thick, fill=lightgray!50, draw=lightgray!50}
				
		\Knoten{(1)}{(0,0)}{}{thick, minimum size=0.4cm}
		\Knoten{(2)}{(2,0)}{}{thick, minimum size=0.4cm}
		\Knoten{(5)}{(1,-0.5)}{$t^3$}{thick, minimum size=0.4cm}
		
		\draw[thick, -latex, rwth-green] (1) -- (5); 
		\draw[thick, -latex, rwth-green] (5) -- (2); 
		\node at (1,-0.85){$S$}; 
	\end{scope}
	
	\node at (1,-0.85){$P$}; 
	\node at (2,0.5){$G_x$}; 
	\node at (-1.95,-1.55){$G_{y_1}$}; 
	\node at (4,-1.55){$G_{y_2}$}; 
	
	\draw[ultra thick] (0.25,-0.65) -- (-0.25,-1.5); 	
	\draw[ultra thick] (1.75,-0.65) -- (2.25,-1.5); 	
	
	\draw[ultra thick] (-1.8,-2.65) -- (-2,-3); 
	\draw[ultra thick] (-0.2,-2.65) -- (0,-3); 	
	
	\begin{scope}[shift={(4,0)}]
		\draw[ultra thick] (-1.8,-2.65) -- (-2,-3); 
		\draw[ultra thick] (-0.2,-2.65) -- (0,-3); 	
	\end{scope}

	\begin{scope}[shift={(0.25,3.65)}]
		\draw[ultra thick] (-0.2,-2.65) -- (0,-3); 	
	\end{scope}

	\node[rotate=-90] at (-1,-3.5){\Large$\ldots$}; 
	\node[rotate=-90] at (3,-3.5){\Large$\ldots$}; 
	\node[rotate=-90] at (1,1.5){\Large$\ldots$}; 
		
\end{tikzpicture}}}
			\hspace*{1cm}
			\subfloat[]{\label{fig:LastVertex2}	\centering\scalebox{1}{\begin{tikzpicture}
	%
	%
	
%
%
	
	\Ellipse{0,0}{2.5}{1.75}{thick}
	\Ellipse{1,0}{0.5}{1.25}{thick, draw=orange}
	
	\node[orange] at (2,-2){$\pi(v)=\pi(w)=\pi(o_{G_x})$}; 
	
	\Knoten{(1)}{(1,0)}{}{thick, minimum size=0.15cm}
	\Knoten{(2)}{(1,0.75)}{}{thick, minimum size=0.15cm}
	\Knoten{(3)}{(1,-0.75)}{}{thick, minimum size=0.15cm}
	\Knoten{(4)}{(-2.2,0)}{\scalebox{0.9}{\textcolor{black}{$o_{G_x}$}}}{thick, minimum size=0.4cm, draw=black}
	
	\draw[thick, -latex, decorate,decoration={snake,amplitude=.6mm,segment length=5mm,post length=2mm}, gray] (4) -- (1);
	\draw[thick, -latex, decorate,decoration={snake,amplitude=.6mm,segment length=5mm,post length=2mm}, in=120, out=45, gray ] (4) -- (2); 
	\draw[thick, -latex, decorate,decoration={snake,amplitude=.6mm,segment length=5mm,post length=2mm}, in=210, out=-45, gray] (4) -- (3);  

	\Knoten{(5)}{(-2.2,0.75)}{$v$}{thick, minimum size=0.4cm}
	\Knoten{(6)}{(-2.2,-0.75)}{$w$}{thick, minimum size=0.4cm}

	\draw[thick, -latex, decorate,decoration={snake,amplitude=.6mm,segment length=5mm,post length=2mm}, SteelBlue] (5) -- (1);
	\draw[thick, -latex, decorate,decoration={snake,amplitude=.6mm,segment length=5mm,post length=2mm}, SteelBlue] (5) -- (2);
	\draw[thick, -latex, decorate,decoration={snake,amplitude=.6mm,segment length=5mm,post length=2mm}, SteelBlue] (5) -- (3);
	\draw[thick, -latex, decorate,decoration={snake,amplitude=.6mm,segment length=5mm,post length=2mm}, Brombeer] (6) -- (1);
	\draw[thick, -latex, decorate,decoration={snake,amplitude=.6mm,segment length=5mm,post length=2mm}, Brombeer] (6) -- (2);
	\draw[thick, -latex, decorate,decoration={snake,amplitude=.6mm,segment length=5mm,post length=2mm}, Brombeer] (6) -- (3);	
	
	
	\node[red] at (-1.75,0.75){\Huge\lightning}; 
	\node[red] at (-1.75,-0.75){\Huge\lightning}; 
	
	\node[] at (2.5,1){$G_x$}; 

\end{tikzpicture}}}
		\end{adjustbox}
		\caption{(a) Subgraph $G_x$ for $x\in V(T)$ of SP tree $T$ is the first subgraph that contains sinks $t^1,t^2,t^3$ and its origin $o_{G_x}$ is a last vertex with label $\pi(o_{G_x})=\{t^1,t^2,t^3\}$ (b) Subgraph $G_x$ for $x\in V(T)$ of SP tree $T$ is the first subgraph that contains sinks $\pi(v)=\pi(w)$ and its origin $o_{G_x}$ is a last vertex with label $\pi(o_{G_x})=\pi(v)=\pi(w)$}
	\end{figure}
	If we further follow the SP tree's instructions to compose SP digraph $G$, we continue composing subgraphs serially at the origin or target or in parallel at the origin and target of subgraph $G_x$. 
	Consequently, the access from the unique source $s$ to the sinks of set $\pi(v)=\pi(w)$ remains given only by the origin $o_{G_x}$ of subgraph $G_x$. 
	By choice of subgraph $G_x$, origin $o_{G_x}$ is a last vertex of the same label as the last vertices $v,w$, i.e., $\pi(o_{G_x})=\pi(v)=\pi(w)$. 
	Thus, vertices $v,w$ need to be parallel to origin $o_{G_x}$, as visualized in Figure~\ref{fig:LastVertex2}, or one of them complies with origin $o_{G_x}$. 
	In both cases, we obtain a contradiction as the access of the sinks is only possible via origin $o_{G_x}$ as discussed above. 
\end{proof}
We save the label structure by means of a directed tree referred to as \textit{label tree}. 
The label tree $T^\pi$ of SP digraph $G$ is defined by the the following vertex and arc set
 \begin{align*}
 V(T^\pi) &= \{s\} \cup  \{v\in V(G)\mid \text{$v$ is a last vertex}\}, \\
 A(T^\pi)&=\{(s,w) \mid  \text{$w\in V(T^\pi)$ and $\pi(s)= \pi(w)$}\} \cup \{(v,w) \mid  \text{$v,w\in V(T^\pi)$ and vertices $v,w$ are $(v,w)$-consecutive}\}. 
 \end{align*}
The label tree is a tree spanned by the unique source and all last vertices. 
The root is defined by the unique source $s$ and the leaves are defined by the sinks $t^\lambda_i$, $i\in [\gamma_t(\lambda)]$, $\lambda\in \Lambda$. 
An example is visualized in Figure~\ref{RFfig:labeltree}. 
\begin{figure}
	\hspace*{-0.25cm}
	\scalebox{1}{\input{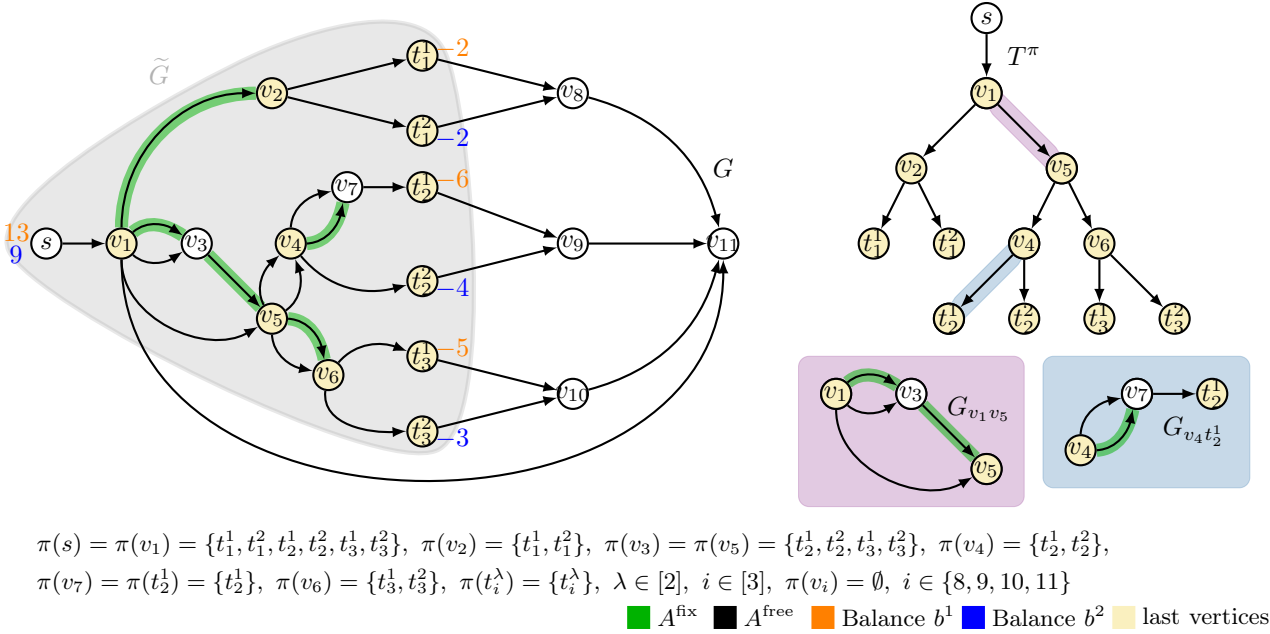}}
	\caption{Example of an SP digraph $G$, vertex labels $\pi(v)$, $v\in V(G)$, label tree $T^{\pi}$, subgraphs spanned by vertices incident to arcs $(v_1,v_5), (v_4,t^1_2)\in A(T^{\pi})$, and the composed union of the subgraphs $\widetilde{G} = \cup_{(v,w)\in A(T^\pi)} G_{vw}$}
	\label{RFfig:labeltree}
\end{figure}
We note that the label tree can be constructed by breadth-first search~\cite{korte2012combinatorial} at every vertex in polynomial time. 

Using the label tree, in the following lemma, we present two properties satisfied by every feasible robust flow.
\begin{lemma}\label{RFlemma:FeasibleFlowPropertiesUniqueSourceParallelSinks}
	Let $\mathcal{I}=(G,c,\boldsymbol{b})$ be a \ProblemName{} instance on SP digraph $G$ with unique source $s$ and parallel sinks $t^\lambda_i$, $i\in [\gamma_t(\lambda)]$, $\lambda\in \Lambda$. 	
	Further, let $T^\pi$ be the label tree of SP digraph $G$, $G_{vw}$ be the subgraph spanned by the vertices incident to arc $(v,w)\in A(T^\pi)$, and $\widetilde{G} := \bigcup_{(v,w)\in A(T^\pi)}G_{vw}$ be the composed union. 
	For a feasible robust $\fett{b}$-flow $\fett{f}=(f^1,\ldots,f^{|\Lambda|})$ the following holds true
	\begin{itemize}
		\item[(i)] the flow is zero beyond digraph $\widetilde{G}$, i.e., $f^\lambda(a)=0$ for all arcs $a\in A(G)\setminus A(\widetilde{G})$ in every scenario $\lambda\in \Lambda$,
		\item[(ii)] the value that flows within subgraph $G_{vw}$, $(v,w)\in A(T^\pi)$ is determined by the sum of the demand of the sinks reachable from vertex $w$, i.e., 
		\begin{align*}
			\sum_{a=(v,z)\in A(G_{vw})} f^\lambda(a) \overset{(E1)}{=}    \sum_{a=(z,w)\in A(G_{vw})} f^\lambda(a)   	\overset{(E2)}{=}		 \sum_{t\in \pi(w)\cap \{t^\lambda_1,\ldots,t^\lambda_{\gamma_t(\lambda)}\} } |b^\lambda(t)|, \ \ \lambda\in \Lambda.
		\end{align*}
	\end{itemize}
\end{lemma}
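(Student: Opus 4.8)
The plan is to derive both properties from two ingredients that are already available: the flow‑decomposition theorem (since $G$ is acyclic and $s$ is the only source, every $f^\lambda$ splits into $(s,t)$‑paths with $t$ a scenario‑$\lambda$ sink, delivering in total $|b^\lambda(t)|$ to each such $t$), and the \emph{cut‑vertex property} implicit in the proof of Lemma~\ref{RFlem:OneLastVertexOfEveryLabel}: the unique last vertex $u$ of a label $\ell$ equals $o_{G_x}$ for the first subgraph $G_x$ in the SP tree that contains all sinks of $\ell$, so that \emph{every} $(s,t)$‑path with $t\in\ell$ passes through $u$. I will use this for $u=v$ and $u=w$ (for $v=s$ it is trivial), together with the fact that vertex labels form inclusion chains and with Lemma~\ref{RFlem:SubgraphOfSPGraphsDefinedByTwoVertices} (a spanned subgraph of an SP digraph is an SP digraph, here with origin $v$ and target $w$).

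For (i), by flow decomposition it suffices to show that every $(s,t)$‑path $P$ with $t$ a sink lies in $\widetilde G$. Along $P$ the labels are non‑increasing and all contain $t$, hence they run through a subchain $\pi(s)=\ell_0\supsetneq\ell_1\supsetneq\dots\supsetneq\ell_k=\{t\}$. For each $j$, $P$ reaches a sink of $\ell_j$, so by the cut‑vertex property the unique last vertex $v_j$ of $\ell_j$ lies on $P$; since $v_j$ is a last vertex, the vertex following it on $P$ carries a strictly smaller label, so $v_j$ is the last $P$‑vertex of label $\ell_j$ and the $v_j$ occur on $P$ in increasing order. The arcs $(s,v_0),(v_0,v_1),\dots,(v_{k-1},v_k)$ then all lie in $A(T^\pi)$ (dropping the first one if $v_0=s$), and the subpath of $P$ between $v_{j-1}$ and $v_j$ uses only vertices reachable from $v_{j-1}$ and reaching $v_j$, hence lies in $G_{v_{j-1}v_j}$ (likewise the initial segment lies in $G_{s v_0}$). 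Thus $P\subseteq\widetilde G$, which is (i).

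For (ii) fix $(v,w)\in A(T^\pi)$; I will first show that no flow‑carrying arc enters $G_{vw}$ except at $v$ nor leaves it except at $w$. Suppose an arc $(z,z')$ carries flow with $z$ internal to $G_{vw}$ (so $v$ reaches $z$ and $z$ reaches $w$) and $z'\notin V(G_{vw})$, i.e. $z'$ does not reach $w$; by the cut‑vertex property at $w$, $z'$ then reaches no sink of $\pi(w)$, so $\pi(z')\cap\pi(w)=\emptyset$. Since $v$ and $w$ are $(v,w)$‑consecutive, $\pi(w)\subseteq\pi(z)\subseteq\pi(v)$ forces $\pi(z)\in\{\pi(w),\pi(v)\}$, and in either case the inclusion‑chain property applied to $\pi(z')\subsetneq\pi(z)$ together with $\pi(z')\cap\pi(w)=\emptyset$ leaves only $\pi(z')=\emptyset$; but then $z'$ reaches no sink, so $(z,z')$ carries no flow, a contradiction. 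A symmetric argument (using the cut‑vertex property at $v$ and acyclicity to place $v$ before the tail of the entering arc) rules out flow‑carrying arcs entering $G_{vw}$ at an internal vertex, and the same placement argument shows every flow‑carrying arc into $w$ lies in $G_{vw}$. Consequently, summing flow conservation over the internal vertices of $G_{vw}$ (none of which is a source or a sink, since a sink in $G_{vw}$ would equal $w$) yields $\sum_{a=(v,z)\in A(G_{vw})}f^\lambda(a)=\sum_{a=(z,w)\in A(G_{vw})}f^\lambda(a)$, which is $(E1)=(E2)$; and this common value equals the total flow into $w$. Finally, by the cut‑vertex property all flow delivered to sinks of $\pi(w)$ passes through $w$ and, conversely, all flow through $w$ continues (acyclicity) to sinks reachable from $w$, i.e. to $\pi(w)$, so the total flow through $w$ in scenario $\lambda$ is $\sum_{t\in\pi(w)\cap\{t^\lambda_1,\dots,t^\lambda_{\gamma_t(\lambda)}\}}|b^\lambda(t)|$ by flow balance at the sinks, giving (E2).

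The step I expect to be most delicate is the no‑leakage claim for $G_{vw}$: this is where the SP structure genuinely enters, and it hinges on combining the cut‑vertex property of last vertices (from the proof of Lemma~\ref{RFlem:OneLastVertexOfEveryLabel}) with the inclusion‑chain property of labels and with the $(v,w)$‑consecutiveness that pins $\pi(z)$ down to two possibilities; Lemma~\ref{RFlem:SubgraphOfSPGraphsDefinedByTwoVertices} is used to guarantee that $G_{vw}$ is a well‑behaved SP digraph with the expected origin and target. Once no‑leakage is established, both (i) and (ii) reduce to routine flow‑conservation bookkeeping.
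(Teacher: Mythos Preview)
Your argument is correct and arrives at both conclusions, but the organization differs from the paper's. For~(i) the paper simply asserts that $V(\widetilde G)$ coincides with $\{z:\pi(z)\neq\emptyset\}$, so that any arc $(x,y)\notin A(\widetilde G)$ has $\pi(y)=\emptyset$ and hence carries no flow by flow balance; your flow-decomposition argument, tracing each $(s,t)$-path through the chain of last vertices, is more explicit and in fact supplies the justification the paper leaves to a figure. For~(ii) the paper proves a \emph{structural} claim---that no arc $(u,w)\in A(G)\setminus A(G_{vw})$ exists at all---by a direct SP-tree argument (showing such a $u$ would be parallel to $v$ yet still reach $w$, which the series--parallel structure forbids), whereas you prove only the weaker \emph{flow} statement that no such arc carries flow, via the cut-vertex property of last vertices; your uniform no-leakage lemma treats entering and leaving arcs symmetrically and is arguably tidier. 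One small point worth tightening: in your case split ``$\pi(z)\in\{\pi(w),\pi(v)\}$'' the second alternative never occurs when $\pi(v)\neq\pi(w)$, since $v$ is then a last vertex and every internal $z\neq v$ satisfies $\pi(z)\subsetneq\pi(v)$, forcing $\pi(z)=\pi(w)$. This is just as well, because the inclusion-chain argument you sketch for that phantom case would not by itself force $\pi(z')=\emptyset$ (one could have $\pi(z')\subseteq\pi(v)\setminus\pi(w)$); the conclusion is saved precisely because the case does not arise.
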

For simplicity's sake, we denote the sum of the demand of sinks in scenario $\lambda \in \Lambda$ reachable from vertex $v\in V(G)$ by parameter $\beta^\lambda(v)$, i.e., $\beta^\lambda(v) = \sum_{t\in \pi(v)\cap \{t^\lambda_1,\ldots,t^\lambda_{\gamma_t(\lambda)}\} } |b^\lambda(t)|$.
\begin{proof}	
For the proof of the first statement, we note that $\widetilde{G}\subseteq G$ holds true. 
Furthermore, we note that $\pi(z)\neq \emptyset$ holds for all vertices $z\in V(\widetilde{G})$ and $\pi(z)= \emptyset$ holds for all vertices $z\in V(G) \setminus V(\widetilde{G})$.  
For an example, we refer to Figure~\ref{RFfig:labeltree}. 
Assume now the first statement is false.
There exists a feasible robust $\fett{b}$-flow $\fett{f}=(f^1,\ldots,f^{|\Lambda|})$ with $f^\lambda(a)>0$ for at least one arc $a=(x,y)\in A(G)\setminus A(\widetilde{G})$ in one scenario $\lambda\in \Lambda$. 
For the labels of the incident vertices of arc $a=(x,y)$ it holds either $\pi(x)=\pi (y)=\emptyset$ or $\pi (x)\neq \emptyset$ and $\pi (y)=\emptyset$ due to the inclusion chains formed by the labels.
Thus, in both cases none of the sinks $t^\lambda_i$, $i\in [\gamma_t(\lambda)]$, $\lambda\in\Lambda$ is reachable via arc $a$. 
We obtain $f^\lambda(a)=0$ for all scenarios $\lambda\in \Lambda$ because of the flow balance constraints, which contradicts the assumption. 

For the proof of the second statement, let $\lambda\in \Lambda$ be an arbitrary but fixed scenario. 
Equality~$(E1)$ holds true as the flow which enters SP digraph $G_{vw}$ via vertex $v$ needs to reach vertex $w$. 
There does not exist a sink in subgraph $G_{vw}$ (unless vertex $w$ is a sink itself) due to the following reason. 
As $w\in V(T^\pi)$ holds, we obtain $\pi(w)\neq \emptyset$ by definition. 
Thus, either vertex $w$ is a sink itself or a number of sinks is reachable from vertex $w$. 
In both cases, none of the vertices $z\in V(G_{vw})\setminus \{w\}$ is a sink as it would not be parallel to the sinks of set $\pi(w)$. 
Equality~$(E2)$ holds true as the value of the ingoing flow at vertex $w$ in SP digraph $G_{vw}$ must be equal to the sum of the demand of the sinks reachable from vertex $w$, i.e., $ \beta^\lambda(w)$. 
If the value was greater, the flow balance constraints would not be satisfied due to oversupply. 
If the value was lower, the demand of the sinks of set $\pi(w)$ could not be met as the only access is via vertex $w$. 
We note that no additional units are sent beyond subgraph $G_{vw}$ via vertex $w$ to sinks of set $\pi(w)$ as there does not exist an arc $(u,w)\in A(G)\setminus A(G_{vw})$ due to the following reason. 
We note that there does not exist a $(v,u)$-path with $u\in V(G)\setminus V(G_{vw})$.  
If there existed a $(v,u)$-path, it would be included with arc $(u,w)$ in subgraph $G_{vw}$.
As $G$ is an SP digraph, there does not exist an $(u,v)$-path either due to $(v,w)$-path and the assumption of the existence of arc $(u,w)$. 
Thus, vertices $u$ and $v$ are parallel as visualized in Figure~\ref{RFfig:proofparallelvertices}. 
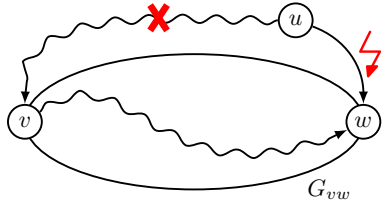
\begin{figure}
	\centering
	\begin{adjustbox}{max width=1\textwidth}
			\centering\scalebox{0.9}{\begin{tikzpicture}
	
	\Ellipse{2.5,0}{2.5}{1}{thick}
	\Knoten{(1)}{(0,0)}{$v$}{thick}
	\Knoten{(2)}{(5,0)}{$w$}{thick}
	\node at (4.5,-1){$G_{vw}$}; 
	
	\Knoten{(3)}{(4,1.5)}{$u$}{thick}
	
	\draw[thick, -latex] (3) to [in=90, out=-20] (2); 
	
	\draw[thick, -latex, decorate,decoration={snake,amplitude=.6mm,segment length=5mm,post length=2mm}] (1) .. controls (1.5,1) and (2,-1.5)  .. (2); 
	
	\draw[thick, -latex, decorate,decoration={snake,amplitude=.6mm,segment length=5mm,post length=1mm}, ]  (3)   to [in=90, out=180]  (1); 
	
	\Kreuz{0.1cm}{red}{(1.85,1.35)}{0.25}

	\node[red] at (5.1,1){\Huge \lightning}; 
\end{tikzpicture}}
	\end{adjustbox}
	\caption{If vertex $u\in V(G)\setminus V(G_{vw})$ incident to arc $(u,w)$ existed, it would be parallel to vertex $v$}\label{RFfig:proofparallelvertices}
\end{figure}
Let $T$ be an SP tree of SP digraph $G$. 
As $v$ is a last vertex, there must exist a $P$-vertex $x\in V(T)$ whose associated subgraph $G_x$ is the first subgraph that contains all sinks of set $\pi(v)$ and has vertex $v$ as origin. 
Let $G_1$ and $G_2$ denote the subgraphs by which the subgraph $G_x$ is composed in parallel. 
As $w$ is a last vertex with $\pi(w)\subsetneq \pi(v)$, it is contained in either subgraph $G_1$ or $G_2$ but it is not the target.
If vertex $w$ was the target, the sinks in set $\pi(v)\setminus \pi(w)\subseteq V(G_x)$ would not be parallel to the sinks in set $\pi(w)$. 
For this reason, the access to vertex $w$ is given by vertex $v$. 
As vertices $u$ and $v$ are parallel, there cannot exist an $(u,w)$-path in SP digraph $G$, and in particular, there cannot exist an arc $(u,w) \in A(G)\setminus A(G_{v,w})$. 
\end{proof}
As a result, we can focus on the spanned subgraphs induced by the label tree's arcs to compute a robust flow. 
Furthermore, using Lemma~\ref{RFlemma:FeasibleFlowPropertiesUniqueSourceParallelSinks}, we provide the analogous result to Lemma~\ref{ClaimPropertyCheapestFlowUniqueSourceAndSink} about the structure of an existing optimal robust flow.
\begin{lemma}\label{ClaimPropertyCheapestFlow}
	Let $\mathcal{I}=(G,c,\boldsymbol{b})$ be a \ProblemName{} instance on SP digraph $G$ with a unique source and parallel sinks. 	
	There exists an optimal robust $\boldsymbol{b}$-flow $\boldsymbol{\tilde{f}}=(\tilde{f}^1,\ldots, \tilde{f}^{|\Lambda|})$ such that for all feasible robust $\boldsymbol{b}$-flows $\fett{f}=(f^1, \ldots, f^{|\Lambda|})$ it holds
	$c(\tilde{f}^\lambda)\leq c(f^\lambda)$ for all $\lambda\in \Lambda$.
\end{lemma}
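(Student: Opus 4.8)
The plan is to reduce the statement to the unique source/unique sink case handled by Lemma~\ref{ClaimPropertyCheapestFlowUniqueSourceAndSink}, exploiting the decomposition along the label tree $T^\pi$ provided by Lemma~\ref{RFlemma:FeasibleFlowPropertiesUniqueSourceParallelSinks}. The key observation is that, by Lemma~\ref{RFlemma:FeasibleFlowPropertiesUniqueSourceParallelSinks}(i)--(ii), every feasible robust $\boldsymbol{b}$-flow lives entirely inside $\widetilde{G}=\bigcup_{(v,w)\in A(T^\pi)}G_{vw}$, and inside each spanned subgraph $G_{vw}$ the total flow in scenario $\lambda$ is the fixed quantity $\beta^\lambda(w)$, independent of the particular flow chosen. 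Hence the robust flow problem decouples into one ``local'' sub-instance per arc $(v,w)$ of $T^\pi$: on $G_{vw}$ we must route, in each scenario $\lambda$, exactly $\beta^\lambda(w)$ units from $v$ to $w$, subject to the consistent flow constraints on $\fix(G_{vw})$, and the cost of a global flow $f^\lambda$ is the sum over $(v,w)\in A(T^\pi)$ of the cost of its restriction to $G_{vw}$.

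First I would make this decoupling precise: fix an arbitrary feasible robust flow $\boldsymbol{f}$ and, for each arc $(v,w)\in A(T^\pi)$, let $\boldsymbol{f}_{vw}$ denote its restriction to $A(G_{vw})$. By Lemma~\ref{RFlem:SubgraphOfSPGraphsDefinedByTwoVertices}, $G_{vw}$ is itself an SP digraph with origin $v$ and target $w$; equipping it with balances $b^\lambda_{vw}(v)=\beta^\lambda(w)$, $b^\lambda_{vw}(w)=-\beta^\lambda(w)$, and $0$ elsewhere turns $\boldsymbol{f}_{vw}$ into a feasible robust flow of a unique source/unique sink \ProblemName{} instance. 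I would also check the converse: given, for each $(v,w)$, a feasible robust flow on the local instance, their arc-wise union is a feasible robust flow on $G$ — here one uses that the $G_{vw}$ share only the last vertices (which are the origins/targets), that $\widetilde{G}$ inherits the SP structure through series/parallel compositions dictated by $T^\pi$, and that the flow-balance at every shared vertex is precisely the telescoping identity $(E1)$--$(E2)$. The cost decomposition $c(f^\lambda)=\sum_{(v,w)\in A(T^\pi)} c(f^\lambda_{vw})$ follows since arcs outside $\widetilde{G}$ carry no flow and the $A(G_{vw})$ partition $A(\widetilde{G})$.

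Now I would apply Lemma~\ref{ClaimPropertyCheapestFlowUniqueSourceAndSink} to each local instance on $G_{vw}$: it yields a local optimal robust flow $\boldsymbol{\tilde f}_{vw}$ with $c(\tilde f^\lambda_{vw})\le c(g^\lambda_{vw})$ for \emph{every} feasible local robust flow $\boldsymbol{g}_{vw}$ and every $\lambda\in\Lambda$. Define $\boldsymbol{\tilde f}$ to be the arc-wise union of these $\boldsymbol{\tilde f}_{vw}$, extended by $0$ on $A(G)\setminus A(\widetilde G)$; by the converse direction above this is a feasible robust $\boldsymbol{b}$-flow. Then for any feasible robust $\boldsymbol{b}$-flow $\boldsymbol{f}$ and any $\lambda$,
\[
c(\tilde f^\lambda)=\sum_{(v,w)\in A(T^\pi)} c(\tilde f^\lambda_{vw}) \le \sum_{(v,w)\in A(T^\pi)} c(f^\lambda_{vw}) = c(f^\lambda),
\]
which is the claimed scenario-wise dominance; in particular $\boldsymbol{\tilde f}$ is optimal, since $c(\boldsymbol{\tilde f})=\max_\lambda c(\tilde f^\lambda)\le\max_\lambda c(f^\lambda)=c(\boldsymbol{f})$ for the (or any) optimal $\boldsymbol{f}$.

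I expect the main obstacle to be the \emph{converse gluing step}: verifying that an arbitrary independent choice of local robust flows on the subgraphs $G_{vw}$ really assembles into a globally feasible robust flow. The subtlety is twofold — (a) the consistent flow constraints are ``local'' to each $G_{vw}$ since the $\fix$-arcs are partitioned among them, but one must confirm no fixed arc of $G$ lies outside $\widetilde G$ (true, by Lemma~\ref{RFlemma:FeasibleFlowPropertiesUniqueSourceParallelSinks}(i), any such arc carries zero flow in all scenarios, so equality is trivially met — but actually I should argue such arcs simply cannot exist or can be ignored), and (b) flow balance at a last vertex $w$ that is the target of several $T^\pi$-arcs $(v_1,w),\dots,(v_k,w)$ and the origin of further arcs: the inflow is $\sum_i \beta^\lambda(w)$-type terms that must match the outflow, and this is exactly where the inclusion-chain structure of the labels and identity $(E2)$ of Lemma~\ref{RFlemma:FeasibleFlowPropertiesUniqueSourceParallelSinks} do the bookkeeping. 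I would handle this by an induction on the construction of $T^\pi$ from the root, or equivalently on the SP composition order of $\widetilde G$, showing at each step that partial balance is preserved. Everything else is routine once the decoupling is set up.
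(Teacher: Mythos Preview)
Your proposal is correct and follows essentially the same approach as the paper: decompose along the label tree $T^\pi$ into local unique-source/unique-sink instances on the $G_{vw}$, apply Lemma~\ref{ClaimPropertyCheapestFlowUniqueSourceAndSink} to each, and sum the resulting scenario-wise inequalities using the cost decomposition $c(f^\lambda)=\sum_{(v,w)\in A(T^\pi)} c(f^\lambda_{|G_{vw}})$ guaranteed by Lemma~\ref{RFlemma:FeasibleFlowPropertiesUniqueSourceParallelSinks}. One small remark on your anticipated obstacle: since $T^\pi$ is a rooted tree, each non-root vertex $w$ is the target of exactly \emph{one} arc of $T^\pi$, so the balance check at $w$ is just the identity $\beta^\lambda(w)=\sum_{w'\text{ child of }w}\beta^\lambda(w')$, which holds because the labels of the children partition $\pi(w)$; the paper treats this gluing step in a single sentence.
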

\begin{proof}
	Let $T^\pi$ be the label tree of SP digraph $G$ and let $G_{vw}$ be the subgraph spanned by the vertices incident to arc $(v,w)\in A(T^\pi)$. 
	By Lemma~\ref{RFlemma:FeasibleFlowPropertiesUniqueSourceParallelSinks}, the flow is zero beyond digraph $\widetilde{G}=\cup_{(v,w)\in A(T^\pi)}G_{vw}$ and the values which flow within subgraphs $G_{vw}$, $(v,w)\in A(T^\pi)$ are determined for every feasible robust flow. 
	Consequently, only the flow within a subgraph $G_{vw}$, $(v,w)\in A(T^\pi)$ may differ in feasible and in particular in optimal solutions. 
	Considering the subgraphs $G_{vw}$ for all $(v,w)\in A(T^\pi)$ separately, we split \ProblemName{} instance $\mathcal{I}$ into instances $\mathcal{I}_{vw}=(G_{vw},c,\fett{b_{vw}})$ where the arc cost $c$ on the subgraphs remains equal and the new balances $\fett{b_{vw}}:V(G_{vw})\rightarrow \mathbb{Z}$ are defined as follows
	\begin{align*}
		b^\lambda_{vw}(u) &= 
		\begin{cases}
			0 									& \text{ for all }u\in V(G_{vw})\setminus \{v,w\}, \\
			 \beta^\lambda(w) 			& \text{ for }u=v,\\
			- \beta^\lambda(w) 	 & \text{ for }u=w.
		\end{cases}
	\end{align*}	
	Instance $\mathcal{I}_{vw}$, $(v,w)\in A(T^\pi)$ is a \ProblemName{} instance based on an SP digraph with a unique source and a unique sink. 
	By Lemma~\ref{ClaimPropertyCheapestFlowUniqueSourceAndSink}, there exists an optimal robust $\boldsymbol{b_{vw}}$-flow $\boldsymbol{\tilde{f}_{vw}}=(\tilde{f}^1_{vw},\ldots, \tilde{f}^{|\Lambda|}_{vw})$ such that for all feasible robust $\boldsymbol{b_{vw}}$-flows $\fett{f_{vw}}=(f^1_{vw}, \ldots, f^{|\Lambda|}_{vw})$ it holds $c(\tilde{f}^\lambda_{vw})\leq c(f^\lambda_{vw})$ for all $\lambda\in \Lambda$.
	The composed flow $\boldsymbol{\tilde{f}} = \sum_{(v,w)\in A(T^\pi)}\boldsymbol{\tilde{f}_{vw}}$ is a feasible robust $\fett{b}= \sum_{(v,w)\in A(T^\pi)} \fett{b_{vw}}$-flow for \ProblemName{} instance $\mathcal{I}$ as the flow balance and consistent flow constraints are satisfied.
	Let $\fett{\hat{f}}$ be an arbitrary feasible robust $\fett{b}$-flow for \ProblemName{} instance $\mathcal{I}$. 
	Further, let $\fett{\hat{f}}_{|G_{vw}}$ be the flow which results from restricting flow $\fett{\hat{f}}$ to subgraph $G_{vw}$, $(v,w)\in A(T^\pi)$. 
	Due to Lemma~\ref{RFlemma:FeasibleFlowPropertiesUniqueSourceParallelSinks}, flow $\fett{\hat{f}}_{|G_{vw}}$ is a feasible robust $\fett{b_{vw}}$-flow for instance $\mathcal{I}_{vw}$, $(v,w)\in A(T^\pi)$. 
	Thus, it holds $c(\tilde{f}^\lambda_{vw})\leq c(\hat{f}^\lambda_{|G_{vw}}) $ for all $\lambda\in \Lambda$.
	We obtain
	\begin{align*}
		c(\tilde{f}^\lambda)  = \sum_{(v,w)\in A(T^\pi)} c(\tilde{f}^\lambda_{vw})  \leq  \sum_{(v,w)\in A(T^\pi)} c(\hat{f}^\lambda_{|G_{vw}}) = c(\hat{f}^\lambda)
	\end{align*}
	for all scenarios $\lambda\in \Lambda$, which implies $c(\fett{\tilde{f}})\leq c(\fett{\hat{f}})$.
	Consequently, robust $\fett{b}$-flow $\boldsymbol{\tilde{f}}$ is optimal such that for all feasible robust $\boldsymbol{b}$-flows $\fett{f}=(f^1, \ldots, f^{|\Lambda|})$ it holds $c(\tilde{f}^\lambda)\leq c(f^\lambda)$ for all $\lambda\in \Lambda$.	
\end{proof}
Using Lemmas~\ref{RFlemma:FeasibleFlowPropertiesUniqueSourceParallelSinks} and~\ref{ClaimPropertyCheapestFlow}, we specify the composition of an optimal robust flow. 
\begin{lemma}\label{RFlem:zusammengesetzteLoesungOptimal}
	Let $\mathcal{I}=(G,c,\boldsymbol{b})$ be a \ProblemName{} instance on SP digraph $G$ with a unique source and parallel sinks. 
	Further, let $T^\pi$ be the label tree of SP digraph $G$ and $G_{vw}$ be the subgraph spanned by the vertices incident to arc $(v,w)\in A(T^\pi)$.
	Let $\mathcal{I}_{vw}=(G_{vw},c,\boldsymbol{b_{vw}})$ be the \ProblemName{} instance restricted to subgraph $G_{vw}\subseteq G$ where the arc cost $c$ remains equal and the new balances are defined as follows
	\begin{align*}
		&b^\lambda_{vw}(u) = 
		\begin{cases}
			0  & \text{ for all }u\in V(G_{vw})\setminus \{v,w\}, \\
			\beta^\lambda(w)  & \text{ for }u=v,\\
			-\beta^\lambda(w)  & \text{ for }u=w.
		\end{cases}
	\end{align*}
For $(v,w)\in A(T^\pi)$, let $\fett{f_{vw}}=(f^1_{{vw}},\ldots, f^{|\Lambda|}_{{vw}})$ be an optimal robust $\fett{b_{vw}}$-flow for instance $\mathcal{I}_{vw}$ such that for all feasible robust $\fett{b_{vw}}$-flows $\fett{\hat{f}}=(\hat{f}^1, \ldots, \hat{f}^{|\Lambda|})$ it holds $c(f^\lambda_{vw})\leq c(\hat{f}^\lambda)$ for all $\lambda\in \Lambda$.
The composed flow $\fett{f} = \sum_{(v,w)\in A(T^\pi)} \fett{f_{vw}}$ is an optimal robust $\fett{b} = \sum_{(v,w)\in A(T^\pi)} \fett{b_{vw}}$-flow for instance $\mathcal{I}$. 
\end{lemma}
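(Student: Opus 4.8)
The plan is to verify that the composed flow $\fett{f}=\sum_{(v,w)\in A(T^\pi)}\fett{f_{vw}}$ is a feasible robust $\fett{b}$-flow and then to show that every feasible robust $\fett{b}$-flow costs at least as much, scenario by scenario; this is essentially the argument already used inside the proof of Lemma~\ref{ClaimPropertyCheapestFlow}, now phrased for an arbitrary scenario-wise minimal choice of the subflows rather than for the one produced by Algorithm~\ref{RFAlg:2ShortestPathForUniqueSourceUniqueSink}. The facts I would rely on throughout are those established in (the proof of) Lemma~\ref{RFlemma:FeasibleFlowPropertiesUniqueSourceParallelSinks}: the subgraphs $G_{vw}$, $(v,w)\in A(T^\pi)$, are pairwise arc-disjoint with union $\widetilde G:=\bigcup_{(v,w)\in A(T^\pi)}G_{vw}$; every feasible robust $\fett{b}$-flow vanishes on $A(G)\setminus A(\widetilde G)$; and its restriction to each $G_{vw}$ is a feasible robust $\fett{b_{vw}}$-flow for $\mathcal{I}_{vw}$, through which exactly $\beta^\lambda(w)$ units flow in scenario $\lambda$. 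By Lemma~\ref{RFlem:SubgraphOfSPGraphsDefinedByTwoVertices} each $G_{vw}$ is an SP digraph with origin $v$ and target $w$, so the scenario-wise minimal optimal $\fett{f_{vw}}$ posited in the statement exists by Lemma~\ref{ClaimPropertyCheapestFlowUniqueSourceAndSink}. Because the $G_{vw}$ are arc-disjoint and cover $\widetilde G$, extending each $\fett{f_{vw}}$ by zero outside $G_{vw}$ and summing is well defined: $\fett{f}$ coincides with $\fett{f_{vw}}$ on every arc of $G_{vw}$ and is zero on $A(G)\setminus A(\widetilde G)$.

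First I would check feasibility of $\fett{f}$. Since $\fett{b}=\sum_{(v,w)\in A(T^\pi)}\fett{b_{vw}}$ and each $\fett{f_{vw}}$ satisfies the flow balance constraints of $\mathcal{I}_{vw}$ on $G_{vw}$ while being zero elsewhere, adding the balance equations vertex by vertex over $G$ shows $\fett{f}$ satisfies the flow balance constraints for $\fett{b}$; here one uses that at an interior vertex of $G_{vw}$ every incident arc of $G$ lies in $G_{vw}$, which is part of the analysis in Lemma~\ref{RFlemma:FeasibleFlowPropertiesUniqueSourceParallelSinks}. For the consistent flow constraints take a fixed arc $a\in\fix$: if $a\notin A(\widetilde G)$ then $f^\lambda(a)=0$ for all $\lambda$, and otherwise $a\in A(G_{vw})$ for a unique pair, whence $f^\lambda(a)=f^\lambda_{vw}(a)=f^{\lambda'}_{vw}(a)=f^{\lambda'}(a)$ for all $\lambda,\lambda'\in\Lambda$ because $\fett{f_{vw}}$ is a robust flow. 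Thus $\fett{f}$ is a feasible robust $\fett{b}$-flow, and by arc-disjointness and additivity of $c$ over arcs its cost decomposes as $c(f^\lambda)=\sum_{(v,w)\in A(T^\pi)}c(f^\lambda_{vw})$ for every $\lambda\in\Lambda$.

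Next, for optimality I would take an arbitrary feasible robust $\fett{b}$-flow $\fett{\hat f}$. By Lemma~\ref{RFlemma:FeasibleFlowPropertiesUniqueSourceParallelSinks} it vanishes outside $\widetilde G$ and its restriction $\fett{\hat f}_{|G_{vw}}$ is a feasible robust $\fett{b_{vw}}$-flow for $\mathcal{I}_{vw}$, so the same decomposition gives $c(\hat f^\lambda)=\sum_{(v,w)\in A(T^\pi)}c(\hat f^\lambda_{|G_{vw}})$. The defining property of $\fett{f_{vw}}$ yields $c(f^\lambda_{vw})\le c(\hat f^\lambda_{|G_{vw}})$ for all $\lambda\in\Lambda$; summing over $(v,w)\in A(T^\pi)$ gives $c(f^\lambda)\le c(\hat f^\lambda)$ for every $\lambda$, and therefore $c(\fett{f})=\max_{\lambda\in\Lambda}c(f^\lambda)\le\max_{\lambda\in\Lambda}c(\hat f^\lambda)=c(\fett{\hat f})$. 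Since $\fett{\hat f}$ was arbitrary, $\fett{f}$ is an optimal robust $\fett{b}$-flow.

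The step I expect to be the main obstacle is the structural one underlying everything else: rigorously establishing that the $G_{vw}$, $(v,w)\in A(T^\pi)$, partition the arcs of $\widetilde G$ and that restricting a feasible robust flow to $G_{vw}$ is again feasible with precisely $\beta^\lambda(w)$ units passing through. Arc-disjointness in particular needs care — an arc shared by $G_{vw}$ and $G_{v'w'}$ would, via an $(x,w)$-path supplied by one subgraph and a $(w,x)$-path supplied by the other, force a directed cycle, contradicting acyclicity of the SP digraph, and it is the chain structure of the vertex labels together with the uniqueness of last vertices (Lemma~\ref{RFlem:OneLastVertexOfEveryLabel}) that makes those two paths available. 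Once this is in place, the remaining steps are purely linear: additivity of arc costs over the partition and monotonicity of the maximum. Since the structural content is already supplied by Lemmas~\ref{RFlem:SubgraphOfSPGraphsDefinedByTwoVertices}, \ref{RFlem:OneLastVertexOfEveryLabel}, and \ref{RFlemma:FeasibleFlowPropertiesUniqueSourceParallelSinks}, the proof mostly amounts to assembling these ingredients.
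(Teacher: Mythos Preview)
Your proposal is correct and follows essentially the same approach as the paper: feasibility of the composed flow, then optimality via the cost decomposition $c(g^\lambda)=\sum_{(v,w)\in A(T^\pi)}c(g^\lambda_{|G_{vw}})$ from Lemma~\ref{RFlemma:FeasibleFlowPropertiesUniqueSourceParallelSinks} combined with the scenario-wise minimality hypothesis on the $\fett{f_{vw}}$. Your direct argument ($c(f^\lambda)\le c(\hat f^\lambda)$ for all $\lambda$, then take the maximum) is in fact a bit cleaner than the paper's, which reaches the same conclusion by contradiction with a case split on whether the maximizing scenarios of $\fett{f}$ and $\fett{\tilde f}$ coincide; both rest on exactly the same structural lemmas and the same per-piece inequality.
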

\begin{proof}
	The composed flow $\fett{f}$ is a feasible robust $\fett{b}$-flow for instance $\mathcal{I}$ 
	as the flow balance and consistent flow constraints are satisfied. 
	Assume $\fett{f}$ is not optimal for instance $\mathcal{I}$. 
	There exists a robust $\fett{b}$-flow $\fett{\tilde{f}}$ with less cost, i.e., $c(\fett{\tilde{f}}) = \max_{\lambda\in\Lambda} c(\tilde{f}^\lambda) = c(\tilde{f}^{\lambda_1})< c(f^{\lambda_2}) = \max_{\lambda\in \Lambda} c(f^\lambda)= c(\fett{f})$ with $\lambda_1,\lambda_2\in \Lambda$. 
	Let $\fett{\tilde{f}}_{|G_{vw}}$ denote the flow obtained by restricting flow $\fett{\tilde{f}}$ to subgraph $G_{vw}$, $(v,w)\in A(T^\pi)$. 
	Using Lemma~\ref{RFlemma:FeasibleFlowPropertiesUniqueSourceParallelSinks}, we obtain for the cost
	\begin{align*}
		c(\tilde{f}^{\lambda_1}) &= \sum_{a\in A(G)} c(a)\cdot \tilde{f}^{\lambda_1}(a)  = \sum_{(v,w)\in A(T^\pi)}	\sum_{a\in A(G_{vw})} c(a)\cdot \tilde{f}^{\lambda_1}(a)  = \sum_{(v,w)\in A(T^\pi)}	c(\tilde{f}^{\lambda_1}_{|G_{vw}}), \\
		c(f^{\lambda_2}) &= \sum_{a\in A(G)} c(a)\cdot f^{\lambda_2}(a)  = \sum_{(v,w)\in A(T^\pi)}	\sum_{a\in A(G_{vw})} c(a)\cdot f^{\lambda_2}(a)  = \sum_{(v,w)\in A(T^\pi)}	c(f^{\lambda_2}_{vw}).
	\end{align*}
	If $\lambda_1=\lambda_2$ holds, we obtain 
	\begin{align*}
		\sum_{(v,w)\in A(T^\pi)}	c(\tilde{f}^{\lambda_1}_{|G_{vw}}) = c(\tilde{f}^{\lambda_1}) < c(f^{\lambda_2}) = c(f^{\lambda_1}) = \sum_{(v,w)\in A(T^\pi)}	c(f^{\lambda_1}_{vw}), 
	\end{align*}
	which is a contradiction as $c(f^{\lambda_1}_{vw})\leq c(\tilde{f}^{\lambda_1}_{|G_{vw}})$ holds for all $(v,w)\in A(T^\pi)$. 
	If $\lambda_1\neq \lambda_2$ holds, we obtain 
	\begin{align*}
		\sum_{(v,w)\in A(T^\pi)}	c(\tilde{f}^{\lambda_2}_{|G_{vw}}) = c(\tilde{f}^{\lambda_2})	 \leq  \max_{\lambda\in \Lambda} c(\tilde{f}^\lambda) = c(\tilde{f}^{\lambda_1}) < c(f^{\lambda_2}) = \sum_{(v,w)\in A(T^\pi)}	c(f^{\lambda_2}_{vw}), 
	\end{align*}
	which is a contradiction as $c(f^{\lambda_2}_{vw})\leq c(\tilde{f}^{\lambda_2}_{|G_{vw}})$ holds for all $(v,w)\in A(T^\pi)$. 
\end{proof}
Based on the presented structural results, we provide Algorithm~\ref{RFAlg:OptFlowMultipleSinksUncap} to compute an optimal robust flow. 
\begin{algorithm}[H]
	\caption{} \label{RFAlg:OptFlowMultipleSinksUncap}
	\begin{lyxlist}{Method:}
		\item [{Input:}] \ProblemName{} instance $(G,c,\boldsymbol{b})$ with a unique source and parallel sinks where $G$ is an SP digraph 
		\item [{Output:}] Robust minimum cost $\fett{b}$-flow $\fett{f}$
		\item [{Method:}]~ 	\end{lyxlist}
	\begin{algorithmic}[1]
		\State Construct label tree $T^\pi$
		\For{all arcs $(v,w)\in A(T^\pi)$}
		\State \begin{varwidth}[t]{\linewidth} Consider the unique source, unique sink instance $\mathcal{I}_{vw}=(G_{vw},c,\fett{b_{vw}})$ on subgraph $G_{vw}$ spanned by vertices $v,w$, where the arc cost $c$ remains equal and the new balances are defined for $\lambda\in \Lambda$ as follows \end{varwidth}
		\begin{align*}
			&b^\lambda_{vw}(u) = 
			\begin{cases}
				0 & \text{ for all }u\in V(G_{vw})\setminus \{v,w\}, \\
				\beta^\lambda(w)  & \text{ for }u=v,\\
				-\beta^\lambda(w)  & \text{ for }u=w
			\end{cases}
		\end{align*}
			\State \begin{varwidth}[t]{\linewidth}Compute an optimal robust $\fett{b_{vw}}$-flow $\fett{f_{vw}}$ for instance $\mathcal{I}_{vw}$ such that for all feasible robust $\fett{b_{vw}}$-flows $\fett{\tilde{f}}$ it \linebreak holds $c(f^\lambda_{vw})\leq c(\tilde{f}^\lambda)$ for all $\lambda\in \Lambda$   
			\end{varwidth}
		\EndFor
		\State Set $\fett{f}:=\sum_{(v,w)\in A(T^\pi)}\fett{f_{vw}}$
		\State\Return Robust $\fett{b}$-flow $\fett{f}=(f^1,\ldots,f^{|\Lambda|})$
	\end{algorithmic}
\end{algorithm}
We obtain the polynomial-time solvability of the \ProblemName{} problem for networks based on SP digraphs with a unique source and parallel sinks as shown in the following theorem. 
\begin{theorem}\label{RFlem:}
	Let $(G,c,\boldsymbol{b})$ be a \ProblemName{} instance on SP digraph $G$ with a unique source and parallel sinks. 
	Algorithm~\ref{RFAlg:OptFlowMultipleSinksUncap} computes an optimal robust $\fett{b}$-flow in $\mathcal{O}(|V(G)|^2  +|V(G)| \cdot |A(G)|  )$ time.
\end{theorem}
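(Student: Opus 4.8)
The plan is to verify that Algorithm~\ref{RFAlg:OptFlowMultipleSinksUncap} is correct and then bound its runtime term by term. Correctness is essentially immediate from the structural lemmas already established: Lemma~\ref{RFlem:zusammengesetzteLoesungOptimal} states that the composed flow $\fett{f}=\sum_{(v,w)\in A(T^\pi)}\fett{f_{vw}}$ is an optimal robust $\fett{b}$-flow, provided each $\fett{f_{vw}}$ is an optimal robust $\fett{b_{vw}}$-flow for the unique-source, unique-sink instance $\mathcal{I}_{vw}$ that dominates every feasible flow scenario-wise, and such a flow exists by Lemma~\ref{ClaimPropertyCheapestFlowUniqueSourceAndSink} (computed via Algorithm~\ref{RFAlg:2ShortestPathForUniqueSourceUniqueSink}). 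Hence the first part of the proof is just a reference chain; the substance of the theorem is the runtime analysis.

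\textbf{Runtime analysis.} First I would bound the cost of constructing the label tree $T^\pi$. As remarked after the definition of the label tree, one computes, for each vertex $v\in V(G)$, the set of sinks reachable from $v$ by a single graph search, which costs $\mathcal{O}(|V(G)|+|A(G)|)$ per vertex, giving $\mathcal{O}(|V(G)|\cdot(|V(G)|+|A(G)|)) = \mathcal{O}(|V(G)|^2 + |V(G)|\cdot|A(G)|)$ in total; extracting the last vertices and the $(v,w)$-consecutive relation from these labels is absorbed in the same bound. This already accounts for the full claimed complexity, so everything else must fit inside it. Next, the label tree $T^\pi$ is a tree whose vertex set is $\{s\}$ together with the last vertices, so $|A(T^\pi)| \le |V(G)|$, i.e. there are at most $\mathcal{O}(|V(G)|)$ iterations of the \textbf{for}-loop. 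In each iteration one identifies the spanned subgraph $G_{vw}$ (reachability intersection, $\mathcal{O}(|V(G)|+|A(G)|)$), forms the balances $\fett{b_{vw}}$ from the already-computed quantities $\beta^\lambda(w)$, and runs Algorithm~\ref{RFAlg:2ShortestPathForUniqueSourceUniqueSink}, which by Theorem~\ref{RFthm:PolyAlgUniqueSourceUniqueSink} takes $\mathcal{O}(|V(G_{vw})|+|A(G_{vw})|) = \mathcal{O}(|V(G)|+|A(G)|)$ time. Multiplying by the $\mathcal{O}(|V(G)|)$ iterations yields $\mathcal{O}(|V(G)|^2 + |V(G)|\cdot|A(G)|)$ for the loop. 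Finally, assembling $\fett{f}:=\sum_{(v,w)\in A(T^\pi)}\fett{f_{vw}}$ costs $\mathcal{O}(|A(T^\pi)|\cdot|A(G)|) = \mathcal{O}(|V(G)|\cdot|A(G)|)$. Summing the three contributions gives the stated bound $\mathcal{O}(|V(G)|^2 + |V(G)|\cdot|A(G)|)$.

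\textbf{Where the care is needed.} The routine parts are the per-operation bounds; the subtle point I would emphasize is that the subgraphs $G_{vw}$ for distinct arcs of $T^\pi$ may \emph{overlap}, so one cannot naively claim $\sum_{(v,w)} |A(G_{vw})| = \mathcal{O}(|A(G)|)$ — this is exactly why the crude bound $\mathcal{O}(|V(G)|\cdot|A(G)|)$ on the loop, rather than something linear, is the honest estimate, and why the term $|V(G)|\cdot|A(G)|$ appears in the theorem. A second point to state carefully is that the quantities $\beta^\lambda(w)$ needed to define $\fett{b_{vw}}$ are a by-product of the same reachable-sink computation used to build $T^\pi$, so they incur no extra cost. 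I would also note (referring to the SP-digraph structure, e.g. Lemma~\ref{RFlem:SubgraphOfSPGraphsDefinedByTwoVertices} in the appendix) that each $G_{vw}$ is itself an SP digraph with origin $v$ and target $w$, which is what legitimizes invoking Theorem~\ref{RFthm:PolyAlgUniqueSourceUniqueSink} on $\mathcal{I}_{vw}$ — without this the inner call would not be justified.
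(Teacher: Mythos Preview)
Your proposal is correct and follows essentially the same approach as the paper: correctness is delegated to Lemma~\ref{RFlem:zusammengesetzteLoesungOptimal}, the label tree is built by a graph search from every vertex in $\mathcal{O}(|V(G)|^2+|V(G)|\cdot|A(G)|)$, and the loop over $|A(T^\pi)|\le |V(G)|$ arcs invokes the $\mathcal{O}(|V(G)|+|A(G)|)$ unique-source/unique-sink routine. Your additional remarks on subgraph overlap, the assembly cost, the reuse of $\beta^\lambda(w)$, and the appeal to Lemma~\ref{RFlem:SubgraphOfSPGraphsDefinedByTwoVertices} are valid refinements that the paper leaves implicit.
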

\begin{proof}
The correctness of the algorithm results from Lemma~\ref{RFlem:zusammengesetzteLoesungOptimal}. 
Considering the runtime, we obtain the following. 
The label tree $T^\pi$ can be constructed by breadth-first search~\cite{korte2012combinatorial} at every vertex in $\mathcal{O}(|V(G)|^2  + |V(G)| \cdot |A(G)| )$ time.
For each instance $\mathcal{I}_{vw}$, $(v,w)\in A(T^\pi)$, an optimal robust $\fett{b_{vw}}$-flow $\fett{f_{vw}}$ with the desired property is constructed in $\mathcal{O}(|V(G)|+|A(G)|)$ time by means of the methods of Section~\ref{Subsubsec:UniqueSourceUnqiueSink}. 
Overall, this can be done in $\mathcal{O}(|V(G)|^2+|V(G)| \cdot |A(G)|)$ time as $A(T^\pi)\leq V(G)$ holds true.
In total, the algorithm runs in $\mathcal{O}(|V(G)|^2  + |V(G)| \cdot |A(G)|  )$ time. 
\end{proof}
Finally, we note that the results of this section can be applied to the \ProblemName{} problem for networks based on SP digraphs with parallel sources and parallel sinks if there exists a path between each source and each sink. 
For this special case, we need to determine two vertices, the first vertex $v_1$ reachable from all sources and the last vertex $v_2$ from which all sinks are reachable. 
Using vertices $v_1$, $v_2$, we divide a \ProblemName{} instance $\mathcal{I}$ into the three instances $\mathcal{I}_1, \mathcal{I}_2,\mathcal{I}_3$---a parallel sources, unique sink, a unique source, parallel sinks, and a unique source, unique sink instance---as visualized in Figure~\ref{RFfig:ParallelSourcesParallelSinks}.
\begin{figure}
	\centering
	\scalebox{1}{\begin{tikzpicture}
	
	\Shape{(-0.3,0)   (2.1,1.3) (2.1,-1.3)   }{opacity=0.3, gray!60}{none,fill=gray!60};
	\node[gray!60] at (2.25,1.7){$\widetilde{G}_{v_2q_G}$};
	\Shape{(-0.4,0)   (2.1,1.4) (4.3,0) (2.1,-1.4)   }{ SteelBlue!90}{none};
	\node[SteelBlue!90] at (3.25,1.25){$G_{v_2q_G}$};

	\Shape{(-5.75,0.1)   (-8.1,1.3) (-8.25,-1.2) (-7.1,-1.35)   }{opacity=0.3, gray!60}{none,fill=gray!60};
	\node[gray!60] at (-8,-1.8){$\widetilde{G}_{o_Gv_1}$};
	\Shape{(-5.75,0.1)  (-6.75,1.4) (-8,1.75) (-9.25,1.5) (-10.25,0.1) (-9.1,-1.35)  (-6.9,-1.4)   }{SteelBlue!90}{none};
	\node[SteelBlue!90] at (-7,-1.75){$G_{o_Gv_1}$};
	
	
	\Ellipse{-3,0}{3}{1.75}{very thick, fill=lightgray!50, draw=gray!60, opacity=0.3, fill=gray!60}
	\node[gray!60] at (-0.45,1.3){$G_{v_1v_2}$};
	\Knoten{(1)}{(0,0)}{$v_2$}{thick, minimum size=0.4cm, fill=blond}
	\Knoten{(2)}{(2,0)}{$t^2_1$}{thick, minimum size=0.4cm, fill=blond}
	\Knoten{(3)}{(2,1)}{$t^1_1$}{thick, minimum size=0.4cm, fill=blond}
	\Knoten{(4)}{(2,-1)}{$t_2^2$}{thick, minimum size=0.4cm, fill=blond}
	\Knoten{(5)}{(1,.5)}{}{thick, minimum size=0.4cm, fill=blond}
	\Knoten{(6)}{(1,-.5)}{}{thick, minimum size=0.4cm}
	\Knoten{(7)}{(4,0)}{$q_G$}{thick, minimum size=0.4cm}
	\Knoten{(8)}{(3,0.5)}{}{thick, minimum size=0.4cm}

	\Knoten{(10)}{(-10,0)}{$o_G$}{thick, minimum size=0.4cm}
	\Knoten{(11)}{(-9,0.5)}{}{thick, minimum size=0.4cm}
	\Knoten{(12)}{(-8,1)}{$s^1_1$}{thick, minimum size=0.4cm, fill=Brombeer!60}
	\Knoten{(13)}{(-8,0)}{$s^2_1$}{thick, minimum size=0.4cm, fill=Brombeer!60}
	\Knoten{(14)}{(-9,-1)}{}{thick, minimum size=0.4cm}
	\Knoten{(15)}{(-8,-1)}{$s^1_2$}{thick, minimum size=0.4cm, fill=Brombeer!60}
	
	\Knoten{(16)}{(-7,0.5)}{}{thick, minimum size=0.4cm, fill=Brombeer!60}
	\Knoten{(17)}{(-6,0)}{$v_1$}{thick, minimum size=0.4cm, fill=Brombeer!60}
	\Knoten{(18)}{(-7,-1)}{}{thick, minimum size=0.4cm}

	\Knoten{(20)}{(-4,-2)}{}{thick, minimum size=0.4cm}
	\Knoten{(21)}{(-2,-2.25)}{}{thick, minimum size=0.4cm}
	\Knoten{(22)}{(-1,-2)}{}{thick, minimum size=0.4cm}
	\Knoten{(23)}{(-1,-2.75)}{}{thick, minimum size=0.4cm}
	\Knoten{(24)}{(0,-2.25)}{}{thick, minimum size=0.4cm}
	\Knoten{(25)}{(2,-2)}{}{thick, minimum size=0.4cm}
	
	\draw[thick, -latex] (17) to [bend right, out=-30, in=-150] (20);
	\draw[thick, -latex] (20) to [bend right, out=-10, in=-170] (21);
	\draw[thick, -latex] (24) to [bend right, out=-10, in=-170] (25);
	\draw[thick, -latex] (21) to [bend left, out=45] (22);  
	\draw[unKante,line width=5,  opacity=0.5,\gruen] (21) to [bend right, out=-45] (22);  	
	\draw[thick, -latex] (21) to [bend right, out=-45] (22);  
	\draw[thick, -latex] (21) to [bend right, out=-45]  (23);  
	\draw[thick, -latex] (22) to [bend left, out=45] (24);  
	\draw[unKante,line width=5,  opacity=0.5,\gruen] (22) to [bend right, out=-45] (24);  	
	\draw[thick, -latex] (22) to [bend right, out=-45] (24);  
	\draw[thick, -latex] (23) to [bend right, out=-45]  (24);  
	\draw[thick, -latex] (25) to [bend right, out=-10, in=-170] (7);
	
	\draw[thick, -latex] (10) -- (11);
	\draw[thick, -latex] (11) -- (12);
	\draw[thick, -latex] (11) -- (13);
	\draw[thick, -latex] (10) -- (14);	
	\draw[unKante,line width=5,  opacity=0.5,\gruen] (14) to [bend left, out=45] (15);
	\draw[thick, -latex] (14) to [bend left, out=45] (15);
	\draw[thick, -latex] (14) to [bend right, out=-45] (15);

	\draw[unKante,line width=5,  opacity=0.5,\gruen] (12) to [bend right, out=-45] (16);  	
	\draw[thick, -latex] (12) to [bend right, out=-45] (16);  
	\draw[thick, -latex] (12) to [bend left, out=45] (16);  
	\draw[thick, -latex] (13) -- (16);  
	\draw[unKante,line width=5,  opacity=0.5,\gruen] (16) to [bend right, out=-45] (17);
	\draw[thick, -latex] (16) to [bend left, out=45] (17);
	\draw[thick, -latex] (16) to [bend right, out=-45] (17);
	
	\draw[unKante,line width=5,  opacity=0.5,\gruen] (15) to [bend right, out=-45] (18);
	\draw[thick, -latex] (15) to [bend left, out=45] (18);
	\draw[thick, -latex] (15) to [bend right, out=-45] (18);
	
	\draw[thick, -latex] (18) -- (17);

	\draw[thick, -latex] (1) to [bend left, out=45] (5);  
	\draw[unKante,line width=5,  opacity=0.5,\gruen] (1) to [bend right, out=-45] (5);  	
	\draw[thick, -latex] (1) to [bend right, out=-45] (5);  
	\draw[thick, -latex] (5) to [bend left, out=45] (3);  
	\draw[unKante,line width=5,  opacity=0.5,\gruen] (5) to [bend right, out=-45] (3);  	
	\draw[thick, -latex] (5) to [bend right, out=-45] (3);  	
	
	\draw[thick, -latex] (5) -- (2);
	\draw[thick, -latex] (2) -- (8);  
	\draw[thick, -latex] (8) -- (7);  
	\draw[thick, -latex] (4) -- (7);  
	\draw[thick, -latex] (3) -- (8);  
	  
	\draw[thick, -latex] (1) -- (6);  
	\draw[thick, -latex] (6) to [bend left, out=45] (4);  
	\draw[unKante,line width=5,  opacity=0.5,\gruen] (6) to [bend right, out=-45] (4);  	
	\draw[thick, -latex] (6) to [bend right, out=-45] (4);

	\draw[thick, -latex] (17) to [bend right, out=-80, in=-100, looseness=1.15] (7);
	\draw[thick, -latex] (10) to [bend right, out=-80, in=-90] (7);
	\draw[thick, -latex] (10) to [bend left, out=80, in=110, looseness=1.25] (17);

%
	\node at (2.85,-3){$G$}; 
	
	\Knoten{(27)}{(-4,0)}{}{thick, minimum size=0.4cm}
	\Knoten{(28)}{(-3,1)}{}{thick, minimum size=0.4cm}
	\Knoten{(29)}{(-2,0)}{}{thick, minimum size=0.4cm}
	\Knoten{(30)}{(-2.5,-1)}{}{thick, minimum size=0.4cm}
	\Knoten{(31)}{(-3.5,-1)}{}{thick, minimum size=0.4cm}
	
	\draw[thick, -latex] (17) to [out=45,in=135] (27);
	\draw[unKante,line width=5,  opacity=0.5,\gruen] (27) to[out=90, in=180] (28);
	\draw[unKante,line width=5,  opacity=0.5,\gruen] (28) -- (29);
	\draw[thick, -latex] (27) to[out=90, in=180] (28);
	\draw[thick, -latex] (28) -- (29);
	\draw[thick, -latex] (27) to[out=-5, in=270] (28);
	\draw[thick, -latex] (28) -- (29);
	\draw[thick, -latex] (29) to [out=45,in=135]  (1);
	
	\draw[thick, -latex] (17) to [out=-45, in=180] (31);
	\draw[thick, -latex] (31) -- (30);
	\draw[thick, -latex] (30) to [out=0, in=225] (1);

	\begin{scope}[shift={(-13,-0.5)}]
	\begin{scope}[shift={(0.1,0)}]
		\begin{scope}[shift={(-.2,0)}]
			\begin{scope}[shift={(10.85,-4.6)}]
				\Job{(0,0)}{(0.25,0.25)}{\gruen, \gruen}{}{}
				\node[right] at (0.25,0.15){\small $A^{\text{fix}}$}; 
			\end{scope}
			\begin{scope}[shift={(12,-4.6)}]
				\Job{(0,0)}{(0.25,0.25)}{black, black}{}{}
				\node[right] at (0.25,0.15){\small $A^{\text{free}}$}; 
			\end{scope}
		\end{scope}
	\end{scope}
	\begin{scope}[shift={(13.2,-4.6)}]
		\Job{(0,0)}{(0.25,0.25)}{Brombeer, Brombeer}{}{}
		\node[right] at (0.25,0.15){\small first vertices}; 
	\end{scope}
	\begin{scope}[shift={(15.4,-4.6)}]
		\Job{(0,0)}{(0.25,0.25)}{blond, blond}{}{}
		\node[right] at (0.25,0.15){\small last vertices}; 
	\end{scope}
\end{scope}

\end{tikzpicture}}
	\caption{
		A \ProblemName{} instance can be divided into three instances $\mathcal{I}_1, \mathcal{I}_2,\mathcal{I}_3$ based on subgraphs $G_{o_Gv_1}$, $G_{v_2q_G}$, $G_{v_1v_2}$, respectively. 
		A robust $\fett{b}$-flow is zero beyond subgraph  $\widetilde{G}_{o_Gv_1} \cup  G_{v_1v_2} \cup \widetilde{G}_{v_2q_G}$. First vertices are reversely defined to last vertices 
	}
	\label{RFfig:ParallelSourcesParallelSinks}
\end{figure}
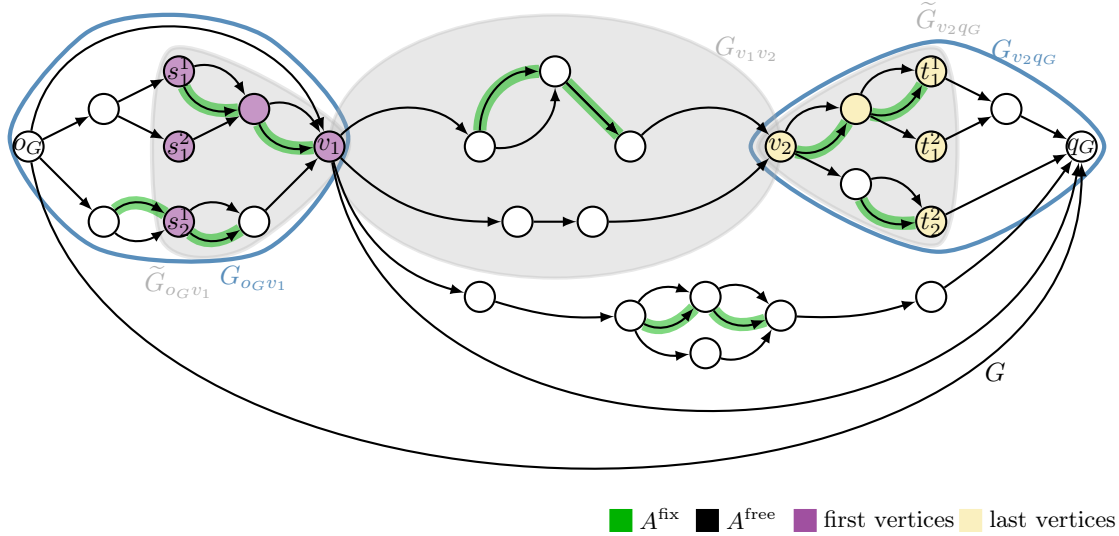
Instances $\mathcal{I}_1=(G_{o_{G}v_1 }, c, \fett{b_{o_{G}v_1 }})$, $\mathcal{I}_2=(G_{v_2q_G}, c, \fett{b_{v_2q_G}})$, $\mathcal{I}_3=(G_{v_1 v_2}, c, \fett{b_{v_1 v_2}})$ are based on the spanned subgraphs 
$G_{o_{G}v_1 }$, $G_{v_2q_G}$, $G_{v_1 v_2}\subseteq G$, respectively. 
The arc cost $c$ remains equal and the balances are defined for $\lambda\in\Lambda$ as follows 
\begin{align*}
	&b^\lambda_{ o_{G}v_1}(u) = 
	\begin{cases}
		b^\lambda(u)  & \text{ for all }u\in V(G_{o_{G}v_1 })\setminus \{v_1\}, \\
		-\beta^\lambda(v_1)  & \text{ for }u=v_1,
	\end{cases}
&&b^\lambda_{v_2q_G}(u) = 
\begin{cases}
	b^\lambda(u)  & \text{ for all }u\in V(G_{v_2q_G})\setminus \{v_2\}, \\
	\beta^\lambda(v_2)  & \text{ for }u=v_2, 
\end{cases}
\end{align*}
\begin{align*}
	&b^\lambda_{v_1 v_2}(u) = 
	\begin{cases}
		b^\lambda(u)  & \text{ for all }u\in V(G_{v_1 v_2})\setminus \{v_1 v_2\}, \\
		\beta^\lambda(v_1)  & \text{ for }u=v_1, \\
		-\beta^\lambda(v_2)  & \text{ for }u=v_2. 
	\end{cases}
\end{align*}
For the \ProblemName{} instances $\mathcal{I}_1, \mathcal{I}_2 $, $\mathcal{I}_3$, we determine optimal robust flows $\fett{f_1}, \fett{f_2}, \fett{f_3}$, respectively, by means of Algorithms~\ref{RFAlg:2ShortestPathForUniqueSourceUniqueSink} and~\ref{RFAlg:OptFlowMultipleSinksUncap}. 
Flows $\fett{f_3}$ and $\fett{f_1}, \fett{f_2}$ satisfy the properties of Lemmas~\ref{ClaimPropertyCheapestFlowUniqueSourceAndSink} and~\ref{ClaimPropertyCheapestFlow}, respectively. 
Therefore, analogous to the proof of Lemma~\ref{RFlem:zusammengesetzteLoesungOptimal}, we can prove that the composed flow $\fett{f}=\fett{f_1}+\fett{f_2}+\fett{f_3}$ is an optimal robust $\fett{b}$-flow for \ProblemName{} instance $\mathcal{I}$.
\subsection{Complexity for pearl digraphs}
\label{Subsubsec:PearlDigraphs}
In this section, we provide a polynomial-time algorithm for the \ProblemName{} problem on pearl digraphs---SP digraphs with a specific structure.
We note that this special case is $\mathcal{NP}$-complete if arc capacities are given as shown in our previous work~\cite{buesing2020robust}.
Using the representation of SP trees, we define pearl digraphs based on the definition of Ohst~\cite{ohst2016construction}.
\begin{definition}[Pearl digraph]\label{Def:PearlGraphs}
	A \textit{pearl digraph} is an SP digraph where the corresponding SP tree does not have $P$-vertices whose children are $S$-vertices.
\end{definition}
In other words, a pearl digraph is a path consisting of multi-arcs.
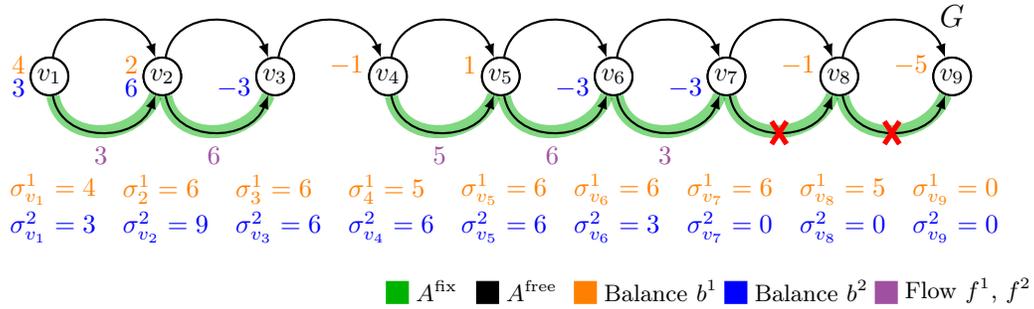
\begin{figure}
	\centering
	\begin{tikzpicture}
\Knoten{(0)}{(-1.5,0)}{}{thick, minimum size=0.5cm}
\Knoten{(1)}{(0,0)}{}{thick, minimum size=0.5cm}
\Knoten{(2)}{(1.5,0)}{}{thick, minimum size=0.5cm}
\Knoten{(3)}{(3,0)}{}{thick, minimum size=0.5cm}
\Knoten{(4)}{(4.5,0)}{}{thick, minimum size=0.5cm}
\Knoten{(5)}{(6,0)}{}{thick, minimum size=0.5cm}
\Knoten{(6)}{(7.5,0)}{}{thick, minimum size=0.5cm}
\Knoten{(7)}{(9,0)}{}{thick, minimum size=0.5cm}
\Knoten{(8)}{(10.5,0)}{}{thick, minimum size=0.5cm}


\node at (10.5,0.8){\large$G$};


\draw[thick,->, -latex, black,] (0) to [out=80, in=110, looseness=1.25] node[midway, below]{}(1);
\draw[unKante,line width=5,  opacity=0.5,\gruen] (0) to [out=-80, in=-110, looseness=1.25] node[midway, above]{}(1);
\draw[thick,->, -latex, black,] (0) to [out=-80, in=-110, looseness=1.25] node[midway, below=2pt, Brombeer]{$3$}(1);

\draw[unKante,line width=5,  opacity=0.5,\gruen] (1) to [out=-80, in=-110, looseness=1.25] node[midway, below]{}(2);
\draw[thick,->, -latex, black,] (1) to [out=80, in=110, looseness=1.25] node[midway, below]{}(2);
\draw[thick,->, -latex, black,] (1) to [out=-80, in=-110, looseness=1.25] node[midway, below=2pt, Brombeer]{$6$}(2);

\draw[thick,->, -latex, black,] (2) to [out=80, in=110, looseness=1.25] node[midway, below]{}(3);

\draw[thick,->, -latex, black,] (3) to [out=80, in=110, looseness=1.25] node[midway, below]{}(4);
\draw[unKante,line width=5,  opacity=0.5,\gruen] (3) to [out=-80, in=-110, looseness=1.25] node[midway, above]{}(4);
\draw[thick,->, -latex, black,] (3) to [out=-80, in=-110, looseness=1.25] node[midway, below=2pt, Brombeer]{$5$}(4);

\draw[thick,->, -latex, black,] (4) to [out=80, in=110, looseness=1.25] node[midway, below]{}(5);
\draw[unKante,line width=5,  opacity=0.5,\gruen] (4) to [out=-80, in=-110, looseness=1.25] node[midway, below]{}(5);
\draw[thick,->, -latex, black,] (4) to [out=-80, in=-110, looseness=1.25] node[midway, below=2pt, Brombeer]{$6$}(5);	

\draw[thick,->, -latex, black,] (5) to [out=80, in=110, looseness=1.25] node[midway, below]{}(6);
\draw[unKante,line width=5,  opacity=0.5,\gruen] (5) to [out=-80, in=-110, looseness=1.25] node[midway, below]{}(6);
\draw[thick,->, -latex, black,] (5) to [out=-80, in=-110, looseness=1.25] node[midway, below=2pt, Brombeer]{$3$}(6);	

\draw[unKante,line width=5,  opacity=0.5,\gruen] (6) to [out=-80, in=-110, looseness=1.25] node[midway, below]{}(7);
\draw[thick,->, -latex, black,] (6) to [out=80, in=110, looseness=1.25] node[midway, below]{}(7);
\draw[thick,->, -latex, black,] (6) to [out=-80, in=-110, looseness=1.25] node[midway, below, Brombeer]{}(7);	
\Kreuz{2}{red}{(8.1,-0.9)}{0.2}

\draw[unKante,line width=5,  opacity=0.5,\gruen] (7) to [out=-80, in=-110, looseness=1.25] node[midway, below]{}(8);
\draw[thick,->, -latex, black,] (7) to [out=80, in=110, looseness=1.25] node[midway, below]{}(8);
\draw[thick,->, -latex, black,] (7) to [out=-80, in=-110, looseness=1.25] node[midway, below, Brombeer]{}(8);
\Kreuz{2}{red}{(9.6,-0.9)}{0.2}

\Knoten{(0)}{(-1.5,0)}{$v_1$}{thick, minimum size=0.5cm}
\Knoten{(1)}{(0,0)}{$v_2$}{thick, minimum size=0.5cm}
\Knoten{(2)}{(1.5,0)}{$v_3$}{thick, minimum size=0.5cm}
\Knoten{(3)}{(3,0)}{$v_4$}{thick, minimum size=0.5cm}
\Knoten{(4)}{(4.5,0)}{$v_5$}{thick, minimum size=0.5cm}
\Knoten{(5)}{(6,0)}{$v_6$}{thick, minimum size=0.5cm}
\Knoten{(6)}{(7.5,0)}{$v_7$}{thick, minimum size=0.5cm}
\Knoten{(7)}{(9,0)}{$v_8$}{thick, minimum size=0.5cm}
\Knoten{(8)}{(10.5,0)}{$v_9$}{thick, minimum size=0.5cm}

\begin{scope}[shift={(0,0.05)}]
%
%
%
	\node[blue, left] at ($(0)+(-0.2,-0.15)$){$3$}; 
	\node[blue, left] at ($(1)+(-0.2,-0.15)$){$6$}; 
	\node[blue, left] at ($(2)+(-0.2,-0.15)$){$-3$}; 
	\node[blue, left] at ($(5)+(-0.2,-0.15)$){$-3$}; 
	\node[blue, left] at ($(6)+(-0.2,-0.15)$){$-3$};

	\node[orange, left] at ($(0)+(-0.2,0.15)$){$4$}; 
	\node[orange, left] at ($(1)+(-0.2,0.15)$){$2$}; 
	\node[orange, left] at ($(3)+(-0.2,0.15)$){$-1$}; 
	\node[orange, left] at ($(4)+(-0.2,0.15)$){$1$}; 
	\node[orange, left] at ($(7)+(-0.2,0.15)$){$-1$}; 
	\node[orange, left] at ($(8)+(-0.2,0.15)$){$-5$}; 
	

	\node[right, orange] at ($(0)+(-0.65,-1.5)$){$\sigma^1_{v_1}=4$}; 
	\node[right, orange] at ($(1)+(-0.65,-1.5)$){$\sigma^1_{2}=6$}; 
	\node[right, orange] at ($(2)+(-0.65,-1.5)$){$\sigma^1_{3}=6$}; 
	\node[right, orange] at ($(3)+(-0.65,-1.5)$){$\sigma^1_{4}=5$}; 
	\node[right, orange] at ($(4)+(-0.65,-1.5)$){$\sigma^1_{v_5}=6$}; 
	\node[right, orange] at ($(5)+(-0.65,-1.5)$){$\sigma^1_{v_6}=6$}; 
	\node[right, orange] at ($(6)+(-0.65,-1.5)$){$\sigma^1_{v_7}=6$}; 
	\node[right, orange] at ($(7)+(-0.65,-1.5)$){$\sigma^1_{v_8}=5$}; 
	\node[right, orange] at ($(8)+(-0.65,-1.5)$){$\sigma^1_{v_9}=0$}; 
	
	\node[right, blue] at ($(0)+(-0.65,-2)$){$\sigma^2_{v_1}=3$}; 
	\node[right, blue] at ($(1)+(-0.65,-2)$){$\sigma^2_{v_2}=9$}; 
	\node[right, blue] at ($(2)+(-0.65,-2)$){$\sigma^2_{v_3}=6$}; 
	\node[right, blue] at ($(3)+(-0.65,-2)$){$\sigma^2_{v_4}=6$}; 
	\node[right, blue] at ($(4)+(-0.65,-2)$){$\sigma^2_{v_5}=6$}; 
	\node[right, blue] at ($(5)+(-0.65,-2)$){$\sigma^2_{v_6}=3$}; 
	\node[right, blue] at ($(6)+(-0.65,-2)$){$\sigma^2_{v_7}=0$}; 
	\node[right, blue] at ($(7)+(-0.65,-2)$){$\sigma^2_{v_8}=0$}; 
	\node[right, blue] at ($(8)+(-0.65,-2)$){$\sigma^2_{v_9}=0$};

\end{scope}


\begin{scope}[shift={(-4.5,0.5)}]
	\begin{scope}[shift={(7.5,-3.5)}]
		\Job{(0,0)}{(0.25,0.25)}{\gruen, \gruen}{}{}
		\node[right] at (0.25,0.15){\small $A^{\text{fix}}$}; 
	\end{scope}
	\begin{scope}[shift={(8.7,-3.5)}]
		\Job{(0,0)}{(0.25,0.25)}{black, black}{}{}
		\node[right] at (0.25,0.15){\small $A^{\text{free}}$}; 
	\end{scope}
	\begin{scope}[shift={(10,-3.5)}]
		\Job{(0,0)}{(0.25,0.25)}{orange, orange}{}{}
		\node[right] at (0.25,0.15){\small Balance $b^1$}; 
	\end{scope}
	\begin{scope}[shift={(12,-3.5)}]
		\Job{(0,0)}{(0.25,0.25)}{blue, blue}{}{}
		\node[right] at (0.25,0.15){\small Balance $b^2$}; 
	\end{scope}
	\begin{scope}[shift={(14,-3.5)}]
		\Job{(0,0)}{(0.25,0.25)}{Brombeer, Brombeer}{}{}
		\node[right] at (0.25,0.15){\small Flow $f^1$, $f^2$}; 
	\end{scope}
\end{scope}
\end{tikzpicture}
	\caption{An example of a \ProblemName{} instance based on a pearl digraph}
	\label{Fig:PearlGraph}
\end{figure}
In the following, we use this special structure and shrink every multi-arc of the pearl digraph to a multi-arc consisting of the cheapest fixed and the cheapest free arc (if they exist). 
As we do not consider arc capacities we may assume that an optimal robust flow always uses the cheapest fixed and free arc of a multi-arc. 
If this was not the case, the flow could be shifted and sent at the same or lower cost.
In addition, we can even remove the remaining fixed arc if a remaining free arc of the same multi-arc exists that is equal or less expensive. 
For a proof, we refer to the parallel-shrinking procedure of the preliminary version of this paper~\cite{busing2022complexity}.
As a result, we may assume without loss of generality that a pearl digraph $G=(V,A)$ is given with vertex set $V=\{v_1,\ldots,v_n\}$ and arc set $A = \{a_i=(a^{\fixhelp}_i,a^{\freehelp}_i) \mid a^{\fixhelp}_i=(v_i,v_{i+1}), a^{\freehelp}_i=(v_i,v_{i+1}) \text{ for }i\in [n-1] \text{ (if they exist)}\}$.
The arc set $A$ is divided in sets $\fix$ and $\free$ such that $a_i^{\fixhelp}\in \fix$ and $a_i^{\freehelp}\in \free$ hold for all $i\in [n-1]$. 
Let $(G,c,\boldsymbol{b})$ be a corresponding \ProblemName{} instance. 
If arcs $a^{\fixhelp}_i$ and $a^{\freehelp}_i$ exist for $i\in [n-1]$, we note that $c(a^{\fixhelp}_i)< c(a^{\freehelp}_i)$ holds. 
An example of an \ProblemName{} instance on a pearl digraph is visualized in Figure~\ref{Fig:PearlGraph}. 
Let $\sigma^\lambda_{v_i}$ denote the sum of the balances of vertices $v_1,\ldots,v_i\in V$ in scenario $\lambda\in \Lambda$, i.e., $\sigma^\lambda_{v_i} = \sum_{j=1}^{i} b^\lambda(v_j)$. 
We refer to $\sigma^\lambda_{v_i}$ as the \textit{state of balance}, in short \textit{state}, at vertex $v_i\in V$ in scenario $\lambda\in \Lambda$. 
We note that $\sigma^\lambda_{v_i}$ units need to be sent from vertex $v_i$ to vertex $v_{i+1}$ to satisfy the balance at vertex $v_i$. 
Furthermore, we note that state $\sigma^\lambda_{v_i}$ is non-negative at every vertex $v_i\in V$ for all scenarios $\lambda\in \Lambda$. 
If this was not the case, the total supply of the predecessor vertices $v_1,\ldots,v_{i-1}$ would not meet the total demand of vertices $v_1,\ldots,v_i$. 
By parameter $\sigma^{\min}_{v_i}$, we indicate the minimum state at vertex $v_i\in V$ among all scenarios $\lambda\in \Lambda$, i.e., $\sigma^{\min}_{v_i} = \min_{\lambda\in \Lambda} \sigma^\lambda_{v_i}$. 
Using the states at the vertices, we formulate the following result about the existence of an optimal robust flow.
\begin{lemma}\label{RFlemChainGraph}
Let $\mathcal{I}=(G,c,\boldsymbol{b})$ be a \ProblemName{} instance on pearl digraph $G=(V,A=\fix\cup\free)$. 
There exists an optimal robust $\boldsymbol{b}$-flow $\boldsymbol{f}=(f^1,\ldots,f^{|\Lambda|})$ with scenario flows $f^\lambda$, $\lambda\in \Lambda$ defined as follows 
\begin{align*}
	f^\lambda(a) = \begin{cases}
		\sigma^{\min}_{v_i} & \text{for all $a=a^{\fixhelp}_i\in \fix $ if $a^{\freehelp}_i\in \free$ exists,} \\
		\sigma^{\lambda}_{v_i} - \sigma^{\min}_{v_i} & \text{for all $a=a^{\freehelp}_i\in \free  $ if  $a^{\fixhelp}_i\in \fix$ exists,}\\
		\sigma^{\lambda}_{v_i}  & \text{for all $a= a^{\fixhelp}_i\in \fix$ if there does not exist $a^{\freehelp}_i$,}\\
		\sigma^{\lambda}_{v_i}  & \text{for all $a= a^{\freehelp}_i\in \free $ if there does not exist $a^{\fixhelp}_i$.}
	\end{cases}
\end{align*}
\end{lemma}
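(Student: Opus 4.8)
The plan is to show two things: first, that the proposed $\boldsymbol{f}=(f^1,\ldots,f^{|\Lambda|})$ is a feasible robust $\boldsymbol{b}$-flow, and second, that it is optimal by comparing it against an arbitrary feasible robust flow on each multi-arc separately. Feasibility is essentially bookkeeping. For the balance constraints, observe that at vertex $v_i$ the net outflow minus inflow equals the flow on $a_i$ (fixed plus free, whichever exist) minus the flow on $a_{i-1}$. Since by construction the total flow across the $i$-th multi-arc is $\sigma^{\min}_{v_i}+(\sigma^\lambda_{v_i}-\sigma^{\min}_{v_i})=\sigma^\lambda_{v_i}$ when both arcs exist (and $\sigma^\lambda_{v_i}$ when only one exists), and since $\sigma^\lambda_{v_i}-\sigma^\lambda_{v_{i-1}}=b^\lambda(v_i)$ by definition of the state, the flow balance holds at every vertex. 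Non-negativity of all flow values follows from the observations already recorded before the lemma: $\sigma^\lambda_{v_i}\ge 0$ for all $i,\lambda$, hence $\sigma^{\min}_{v_i}\ge 0$, and $\sigma^\lambda_{v_i}-\sigma^{\min}_{v_i}\ge 0$. The consistent flow constraints hold because on each fixed arc $a^{\fixhelp}_i$ the flow is $\sigma^{\min}_{v_i}$ (independent of $\lambda$) when a parallel free arc exists, and $\sigma^\lambda_{v_i}$ otherwise — but in the latter case \emph{every} feasible robust flow is forced to route exactly $\sigma^\lambda_{v_i}$ there, so there is no scenario-dependence to violate; one should note this case only occurs when $a^{\fixhelp}_i$ is the unique arc of its multi-arc, forcing equality automatically is impossible unless $\sigma^{\lambda}_{v_i}$ is the same for all $\lambda$, so in fact I should argue that a feasible robust flow exists at all only if this holds, and under that hypothesis $f^\lambda(a^{\fixhelp}_i)=\sigma^\lambda_{v_i}$ is constant.

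For optimality, the key structural fact is that the pearl digraph is a path of multi-arcs, so any feasible robust flow $\boldsymbol{g}=(g^1,\ldots,g^{|\Lambda|})$ must also push exactly $\sigma^\lambda_{v_i}$ total units across the $i$-th multi-arc in scenario $\lambda$ — this is forced by the balance constraints exactly as above, and is independent of $\boldsymbol{g}$. Thus the \emph{only} freedom in a feasible robust flow is how the $\sigma^\lambda_{v_i}$ units split between $a^{\fixhelp}_i$ and $a^{\freehelp}_i$ (when both exist). Write $\phi_i:=g^\lambda(a^{\fixhelp}_i)$, which by the consistent flow constraints is the same for all $\lambda$, and $0\le\phi_i\le\sigma^{\min}_{v_i}$ (it cannot exceed the minimum, else some scenario would need negative flow on the free arc). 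The cost contributed by multi-arc $i$ in scenario $\lambda$ is $c(a^{\fixhelp}_i)\phi_i+c(a^{\freehelp}_i)(\sigma^\lambda_{v_i}-\phi_i)$. Since $c(a^{\fixhelp}_i)<c(a^{\freehelp}_i)$, this is nonincreasing in $\phi_i$ \emph{simultaneously for every scenario $\lambda$}, so it is minimized for all $\lambda$ at once by taking $\phi_i=\sigma^{\min}_{v_i}$, which is precisely the proposed flow. Summing over $i$, the proposed $\boldsymbol{f}$ satisfies $c(f^\lambda)\le c(g^\lambda)$ for \emph{every} scenario $\lambda$ and every feasible robust flow $\boldsymbol{g}$; in particular $c(\boldsymbol{f})=\max_\lambda c(f^\lambda)\le\max_\lambda c(g^\lambda)=c(\boldsymbol{g})$, so $\boldsymbol{f}$ is optimal.

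The main obstacle — and really the only subtle point — is the bookkeeping around multi-arcs where one of the two arc types is absent. I need to be careful that in such a multi-arc the flow is fully determined (no splitting freedom), that this is consistent with the claimed formula, and that the degenerate subcase "$a^{\fixhelp}_i$ exists but $a^{\freehelp}_i$ does not, yet $\sigma^\lambda_{v_i}$ varies with $\lambda$" cannot arise in any \emph{feasible} robust instance (since the consistent flow constraint on that lone fixed arc would be violated). Handling these cases cleanly, and making explicit that the parallel-shrinking preprocessing (justified by reference to~\cite{busing2022complexity}) lets me assume each multi-arc has at most one fixed and at most one free arc with $c(a^{\fixhelp}_i)<c(a^{\freehelp}_i)$, is where the write-up needs the most care; the optimization argument itself is a one-line monotonicity observation once the "total flow across multi-arc $i$ is $\sigma^\lambda_{v_i}$" invariant is in hand.
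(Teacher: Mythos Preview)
Your proposal is correct and follows essentially the same approach as the paper: verify feasibility via the state-of-balance bookkeeping, observe that the total flow across multi-arc $i$ is forced to be $\sigma^\lambda_{v_i}$, and then use $c(a^{\fixhelp}_i)<c(a^{\freehelp}_i)$ together with the bound $\phi_i\le\sigma^{\min}_{v_i}$ to conclude. The only cosmetic difference is that the paper packages the optimality step as a contradiction (assume a cheaper $\boldsymbol{\tilde f}$, shift units from free to fixed arcs to produce something strictly cheaper), whereas you argue directly via monotonicity and obtain the slightly stronger scenario-wise inequality $c(f^\lambda)\le c(g^\lambda)$ for all $\lambda$; the mathematical content is identical.
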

\begin{proof}
Flow $\fett{f}$ defined as above is a feasible robust $\fett{b}$-flow due to the following two reasons. 
Firstly, flow $f^\lambda$, $\lambda\in\Lambda$ satisfies the flow balance constraints at every vertex $v_i\in V$ as $\sigma^\lambda_{v_{i}}$ units are sent along multi-arc $a_i\in A$ for all $i\in [n-1]$, i.e., 
\begin{align*}
	\sum_{a_i=(v_i,v_{i+1})\in A} f^\lambda(a_i) - \sum_{a_{i-1}=(v_{i-1},v_i)\in A} f^\lambda(a_{i-1}) 
	= \sigma^\lambda_{v_i} - \sigma^\lambda_{v_{i-1}} 
	= \sum_{j=1}^{i} b^\lambda(v_j) - \sum_{j=1}^{i-1} b^\lambda(v_j) = b^\lambda(v_i).
\end{align*}
If an (multi-)arc consists of a fixed arc only, i.e., $a=a^{\fixhelp}_i\in A$, we note that $\sigma^{\lambda}_{v_i}=\sigma^{\min}_{v_i}$ must hold true for all scenarios $\lambda\in\Lambda$.
Otherwise, there does not exist a feasible robust $\fett{b}$-flow for instance $\mathcal{I}$. 
Secondly, flows $f^1,\ldots,f^{|\Lambda|}$ satisfy the consistent flow constraints as $\sigma^{\min}_{v_i}$ units are sent along fixed arc $a^{\fixhelp}_i\in \fix$ in all scenarios $\lambda\in \Lambda$.
Assume flow $\fett{f}$ is not optimal. 
There exists an optimal robust $\fett{b}$-flow $\fett{\tilde{f}}=(\tilde{f}^1,\ldots,\tilde{f}^{|\Lambda|})$ with less cost, i.e., $c(\fett{\tilde{f}})<c(\fett{f})$. 
Due to the consistent flow constraints, it holds $\tilde{f}^\lambda(a^{\fixhelp}_i) \leq \sigma^{\min}_{v_i}$ for all fixed arcs $a^{\fixhelp}_i\in \fix$ and all scenarios $\lambda\in \Lambda$. 
As all multi-arcs are of the form $a_i=(a^{\fixhelp}_i,a^{\freehelp}_i)\in A$ (if arcs $a^{\fixhelp}_i,a^{\freehelp}_i$ exist), there must exist at least one multi-arc arc $a_i\in A$ such that $\tilde{f}^\lambda(a^{\fixhelp}_i) < \sigma^{\min}_{v_i}$ and $\tilde{f}^\lambda(a^{\freehelp}_i)  > \sigma^{\lambda}_{v_i} -\sigma^{\min}_{v_i}$ hold for $\lambda\in\Lambda$ (in order that $\fett{\tilde{f}}$ is feasible and $\fett{\tilde{f}}\neq \fett{f}$). 
We shift as many units of flow $\tilde{f}^\lambda$, $\lambda\in \Lambda$ from the free arc $a^{\freehelp}_i$ to the fixed arc $a^{\fixhelp}_i$ until $\sigma^{\min}_{v_i}$ units are sent along fixed arc $a^{\fixhelp}_i$. 
As $c(a^{\freehelp}_i) > c(a^{\fixhelp}_i)$ holds, we obtain a feasible robust $\fett{b}$-flow $\fett{\hat{f}}$ with cost $c(\fett{\hat{f}})<c(\fett{\tilde{f}})<c(\fett{f})$, which contradicts the assumption. 
\end{proof}
Using Lemma~\ref{RFlemChainGraph}, we present Algorithm~\ref{RFAlgoChainGraph} to solve the \ProblemName{} problem on pearl digraphs.
\begin{algorithm}[H]
\caption{}\label{RFAlgoChainGraph}
\begin{lyxlist}{Initialization:}
	\item [{Input:}] \ProblemName{} instance $(G,c,\boldsymbol{b})$ where $G=(V,A=\fix\cup\free)$ is a pearl digraph 
	\item [{Output:}] Robust minimum cost $\boldsymbol{b}$-flow $\boldsymbol{f}$
	\item [{Method:}]~ 	\end{lyxlist}
\begin{algorithmic}[1]
	\State Compute state $\sigma^\lambda_{v_i}$ for every vertex $v_i\in V$ and every scenario $\lambda \in \Lambda$
	\State Compute the minimum state $\sigma^{\min}_{v_i}$ for every vertex $v_i\in V$
	\State{For every scenario $\lambda\in \Lambda$ construct the flow 
		\begin{align*}
		f^\lambda(a) =
		 \begin{cases}
		 	\sigma^{\min}_{v_i} & \text{for all $a=a^{\fixhelp}_i\in \fix $ if $a^{\freehelp}_i\in \free$ exists,} \\
		 	\sigma^{\lambda}_{v_i} - \sigma^{\min}_{v_i} & \text{for all $a=a^{\freehelp}_i\in \free  $ if  $a^{\fixhelp}_i\in \fix$ exists,}\\
		 	\sigma^{\lambda}_{v_i}  & \text{for all $a= a^{\fixhelp}_i\in \fix$ if there does not exist $a^{\freehelp}_i$,}\\
		 	\sigma^{\lambda}_{v_i}  & \text{for all $a= a^{\freehelp}_i\in \free $ if there does not exist $a^{\fixhelp}_i$}
		 \end{cases}
		\end{align*}
	}\\
	\Return Robust $\boldsymbol{b}$-flow $\boldsymbol{f}=(f^1,\ldots,f^{|\Lambda|})$
\end{algorithmic}
\end{algorithm}
We conclude this section with the polynomial-time solvability of the \ProblemName{} problem on pearl digraphs as shown in the following theorem. 
\begin{theorem}
Let $(G,c,\fett{b})$ be a \ProblemName{} instance on pearl digraph $G=(V,A=\fix\cup\free)$. Algorithm~\ref{RFAlgoChainGraph} computes an optimal robust $\fett{b}$-flow in $\mathcal{O}(|V|\cdot |\Lambda|)$ time.
\end{theorem}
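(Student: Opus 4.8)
The plan is to split the statement into correctness and runtime, with correctness being an immediate consequence of Lemma~\ref{RFlemChainGraph}. First I would observe that the flow $\boldsymbol{f}=(f^1,\ldots,f^{|\Lambda|})$ produced in Step~3 of Algorithm~\ref{RFAlgoChainGraph} is, arc by arc, precisely the flow exhibited in Lemma~\ref{RFlemChainGraph}: on a multi-arc $a_i$ that contains both a fixed and a free arc it sends $\sigma^{\min}_{v_i}$ units along the (cheaper) fixed arc $a^{\fixhelp}_i$ and the remaining $\sigma^\lambda_{v_i}-\sigma^{\min}_{v_i}$ units along $a^{\freehelp}_i$, and on a multi-arc consisting of a single arc it sends all $\sigma^\lambda_{v_i}$ units along that arc. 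Since Lemma~\ref{RFlemChainGraph} guarantees that this flow is an optimal robust $\boldsymbol{b}$-flow, the algorithm is correct. I would also note in passing that, on a pearl digraph, infeasibility can occur only if some multi-arc is a lone fixed arc $a^{\fixhelp}_i$ with $\sigma^\lambda_{v_i}\neq\sigma^{\min}_{v_i}$ for some $\lambda\in\Lambda$, which Step~3 detects within the same time bound.

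For the runtime, I would first recall that, after the parallel-shrinking preprocessing described above (and in~\cite{busing2022complexity}), the pearl digraph has the canonical form $V=\{v_1,\ldots,v_n\}$, $A=\{a_i=(a^{\fixhelp}_i,a^{\freehelp}_i)\mid i\in[n-1]\}$, so that $|A|\le 2(|V|-1)=\mathcal{O}(|V|)$; this is the input assumed by Algorithm~\ref{RFAlgoChainGraph}. Then I would analyse the three steps. Step~1 computes, for each fixed scenario $\lambda$, the states $\sigma^\lambda_{v_i}=\sum_{j=1}^{i}b^\lambda(v_j)$ by a single prefix-sum sweep ($\sigma^\lambda_{v_1}=b^\lambda(v_1)$ and $\sigma^\lambda_{v_{i+1}}=\sigma^\lambda_{v_i}+b^\lambda(v_{i+1})$), which is $\mathcal{O}(|V|)$ per scenario and $\mathcal{O}(|V|\cdot|\Lambda|)$ overall. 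Step~2 computes each $\sigma^{\min}_{v_i}=\min_{\lambda\in\Lambda}\sigma^\lambda_{v_i}$ as a minimum over $|\Lambda|$ already-stored values, i.e.\ $\mathcal{O}(|\Lambda|)$ per vertex and $\mathcal{O}(|V|\cdot|\Lambda|)$ in total. Step~3 assigns, for each of the $|\Lambda|$ scenarios and each of the $n-1$ multi-arcs, a flow value to at most two arcs, each read off in constant time from the precomputed states, again $\mathcal{O}(|V|\cdot|\Lambda|)$. Summing the three contributions gives the claimed $\mathcal{O}(|V|\cdot|\Lambda|)$ bound.

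I do not expect a genuine obstacle: the theorem is essentially a bookkeeping corollary of Lemma~\ref{RFlemChainGraph}. The only point that needs a word of care is being explicit that the $\mathcal{O}(|V|\cdot|\Lambda|)$ bound is stated for the already-shrunk multi-arc instance, so that $|A|=\mathcal{O}(|V|)$ may be used freely; I would therefore fix this assumption at the very start of the proof, exactly as the surrounding text already does.
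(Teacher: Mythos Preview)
Your proposal is correct and follows essentially the same approach as the paper: correctness is delegated to Lemma~\ref{RFlemChainGraph}, and the runtime is obtained by bounding each of the three steps by $\mathcal{O}(|V|\cdot|\Lambda|)$. Your analysis is in fact more careful than the paper's (which tersely states ``the construction of the flow is done in $\mathcal{O}(1)$ time'' for Step~3), and your explicit remark that the shrunk instance satisfies $|A|=\mathcal{O}(|V|)$ is a useful clarification.
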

\begin{proof}
The correctness of the algorithm results from Lemma~\ref{RFlemChainGraph}. 
Considering the runtime, we obtain the following. 	
The states can be computed at all vertices in $\mathcal{O}(|V|\cdot |\Lambda|)$ time. 
The minimum state can be computed at every vertex in $\mathcal{O}(|V|\cdot |\Lambda|)$ time. 
The construction of the flow is done in $\mathcal{O}(1)$ time. 
In total, the algorithm runs in  $\mathcal{O}(|V|\cdot |\Lambda|)$ time. 
\end{proof}
\section{Conclusion}
\label{Sec:Conclusion}
In this paper, we considered the \ProblemName{} problem, the uncapacitated version of the \FirstProblem{} problem.
As the complexity results of the \FirstProblem{} problem depend on the arc capacities given, we analyzed the complexity of the \ProblemName{} problem.
On acyclic digraphs, we proved that finding a feasible solution to the \ProblemName{} problem is strongly $\mathcal{NP}$-complete, even if only two scenarios are considered that have the same unique source and unique sink.
On SP digraphs, we obtained the following results. 
We proved the weak $\mathcal{NP}$-completeness, even if only two scenarios are considered that have the same unique source and single (but different) sinks.
For the special case of a constant number of scenarios, we observed the pseudo-polynomial-time solvability.
For the special case of a unique source and a unique sink, we presented a polynomial-time algorithm. 
For the special case of a unique source and parallel sinks or parallel sources and a unique sink, we also presented a polynomial-time algorithm. 
Furthermore, we observed that the algorithm is extendable for the special case of parallel sources and parallel sinks if there exists a path between each source and each sink. 
For the special case of pearl digraphs, we presented a polynomial-time algorithm, independent of the number of sources and sinks. 

For the future work, we will consider the \ProblemName{} problem on SP digraphs if the number of scenarios is part of the input. Furthermore, we will study the problem for further graph classes as digraphs with bounded treewidth.

\section*{\normalsize{ACKNOWLEDGMENTS}}
We would like to recognize the invaluable assistance of Anna Margarethe Limbach who contributed to the proof of Theorem~\ref{RFtheorem:Potenzen} in Appendix~\ref{RF:AppendixB}. 

\bibliographystyle{plain}
\bibliography{Quellen}   
\renewcommand{\appendixname}{}
\renewcommand*{\thesection}{\appendixname~\Alph{section}}
\appendix
\section{Appendix}
\label{RF:AppendixA}
\begin{theorem}\label{RF:NPCompletePairPartition}
	The \RFPairPartition{} problem is weakly $\mathcal{NP}$-complete. 
\end{theorem}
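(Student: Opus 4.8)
The plan is to show membership in $\mathcal{NP}$ and hardness by reduction from the ordinary \textsc{Partition} problem, which is weakly $\mathcal{NP}$-complete. Membership is immediate: given a candidate partition $S^1$, $S^2$ of the $2n$ integers, one checks in polynomial time that the two subsets are disjoint, cover all integers, have equal sum $w$, and satisfy $|S^1\cap S_i| = |S^2\cap S_i| = 1$ for each pair. So the work is the reduction.

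First I would take an arbitrary \textsc{Partition} instance: positive integers $a_1,\ldots,a_k$ with $\sum_{j=1}^k a_j = 2A$, asking whether some subset sums to $A$. The idea is to encode each $a_j$ as a pair where one element is ``large'' and carries the weight $a_j$, and the other is a fixed tiny element common to all pairs, so that the pair-separation constraint forces exactly one large element per pair into each side, while the equal-sum constraint then reduces to choosing a sub-collection of the $a_j$ summing to the right value. Concretely, I would introduce a large common constant $M$ (say $M = 2A+1$, or any value exceeding $\sum_j a_j$) and for each $j\in[k]$ define the pair $S_j = \{\, M + a_j,\; M \,\}$. Then $2n := 2k$, and the total is $\sum_{j=1}^{k}\big((M+a_j) + M\big) = 2kM + 2A = 2w$ with $w = kM + A$. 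Note the monotonicity assumption $s_{2j-1}\ge s_{2j}$ of the problem statement is satisfied by ordering each pair as $(M+a_j, M)$.

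The correctness argument has two directions. Condition (ii) (each pair separated) forces, for every $j$, exactly one of $\{M+a_j, M\}$ into $S^1$ and the other into $S^2$. Write $S^1$ as containing $M+a_j$ for $j\in J$ and $M$ for $j\notin J$, for some $J\subseteq[k]$; then $\sum_{s_i\in S^1} s_i = kM + \sum_{j\in J} a_j$. Condition (i) says this equals $w = kM + A$, i.e.\ $\sum_{j\in J} a_j = A$. Hence a valid \textsc{One-out-of-a-pair-partition} exists if and only if some $J\subseteq[k]$ has $\sum_{j\in J}a_j = A$, which is exactly a Yes-instance of \textsc{Partition}. The reduction is clearly polynomial (each integer has value $O(A)$, so encoding size is linear in the input). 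Since all $a_j$ are positive, the remark $\sum_{i=1}^n s_{2i-1} < 2w$ in the excerpt holds automatically, so the constructed instance is of the stipulated form.

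The main obstacle is essentially cosmetic rather than mathematical: one must be careful that the \textsc{Partition} variant reduced from is itself (weakly) $\mathcal{NP}$-complete---the standard version asking for an equal-sum split is, so I would cite Garey and Johnson~\cite{johnson1979computers} for that---and that the pair-separation constraint does not accidentally trivialize or over-constrain the instance. A secondary point worth a sentence is handling degenerate \textsc{Partition} inputs (e.g.\ $k < 2$ or an odd total $2A$); these are either trivially decidable or excluded, so one may assume $k\ge 2$ and $\sum a_j$ even without loss of generality. Everything else is routine arithmetic.
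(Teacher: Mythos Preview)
Your proposal is correct and follows essentially the same approach as the paper: both reduce from \textsc{Partition} by turning each input integer $a_j$ into a pair $\{a_j+c,\,c\}$ for a common positive constant $c$, so that pair-separation forces one ``heavy'' element per side and the equal-sum condition collapses to $\sum_{j\in J}a_j=A$. The only cosmetic difference is the constant---the paper takes $c=1$ while you take $c=M=2A+1$---and this has no bearing on the argument.
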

\begin{proof}
	The \RFPairPartition{} problem is in $\mathcal{NP}$ as we can check in polynomial time if each pair is separated and if the sum of the integers are equal in both subsets. 
	Let $\widetilde{\mathcal{I}}$ be a \textsc{Partition} instance with positive integers $\tilde{s}_1,\ldots,\tilde{s}_n$ whose sum is $2\tilde{w}$. 
	We construct a \RFPairPartition{} instance $\mathcal{I}$.  
	We set the $2n$ positive integers as follows
	\begin{align*}
		s_{2i-1} &= \tilde{s}_i + 1,\\
		s_{2i} &= 1
	\end{align*}
	for $i\in [n]$ and partition them in pairs $S_{i}=\{s_{2i-1},s_{2i}\}$. 
	The integers $s_1,\ldots,s_{2n}$ sum up to $2\tilde{w} + 2n = 2w$ with $w:=\tilde{w} + n$, i.e., $\sum_{i=1}^{2n}s_i = 2\tilde{w} + 2n =2w$.
	We obtain a feasible \RFPairPartition{} instance $\mathcal{I}$, which is constructed in polynomial time. 
	Hence, it remains to show that $\widetilde{\mathcal{I}}$ is a Yes-instance if and only if there exists a feasible partition for the \RFPairPartition{} instance $\mathcal{I}$.
	
	Let $\widetilde{S}^1$, $\widetilde{S}^2$ be a feasible partition for instance $\widetilde{\mathcal{I}}$. 
	We define a partition for the \RFPairPartition{} instance $\mathcal{I}$ by subsets
	\begin{align*}
		S^1 &= \{s_{2i} \mid \tilde{s}_i \in \widetilde{S}^1\} \cup \{s_{2i-1} \mid \tilde{s}_i \in \widetilde{S}^2\}, \\
		S^2 &= \{s_{2i} \mid \tilde{s}_i \in \widetilde{S}^2\} \cup \{s_{2i-1} \mid \tilde{s}_i \in \widetilde{S}^1\}.
	\end{align*}
	As $\widetilde{S}^1$, $\widetilde{S}^2$ is a feasible partition, each pair $S_{i}=\{s_{2i-1},s_{2i}\}$, $i\in [n]$ is separated by partition $S^1$, $S^2$. 
	We further obtain 
	\begin{align*}
		\sum_{s_i\in S^1} s_i 
	 &= \sum_{s_{2i-1}\in S^1} s_{2i-1} + \sum_{s_{2i}\in S^1} s_{2i} 
		= \sum_{\tilde{s}_{i}\in \widetilde{S}^2} s_{2i-1} + \sum_{\tilde{s}_{i}\in \widetilde{S}^1} s_{2i} 
		= \sum_{\tilde{s}_{i}\in \widetilde{S}^2} \left(\tilde{s}_{i}+1\right) + \sum_{\tilde{s}_{i}\in \widetilde{S}^1} 1
		= \tilde{w}+ n 
		= w \\
		& = \sum_{\tilde{s}_{i}\in \widetilde{S}^1} \left(\tilde{s}_i +1\right) + \sum_{\tilde{s}_{i}\in \widetilde{S}^2} 1 
		= \sum_{\tilde{s}_{i}\in \widetilde{S}^1} s_{2i-1} + \sum_{\tilde{s}_{i}\in \widetilde{S}^2} s_{2i} 
		= \sum_{s_{2i-1}\in S^2} s_{2i-1} + \sum_{s_{2i}\in S^2} s_{2i} 	= \sum_{s_{i}\in S^2} s_{i} .
	\end{align*}
	Consequently, we have constructed a feasible partition for the \RFPairPartition{} instance $\mathcal{I}$. 
	Conversely, let $S^1$, $S^2$ be a feasible partition for instance $\mathcal{I}$. 
	The partition is also feasible for the \mbox{\textsc{Partition} instance $\widetilde{\mathcal{I}}$}. 
\end{proof}
\section{Appendix}
\label{RF:AppendixC}
\begin{proof}[Proof of Lemma~\ref{RFlem:BalanceConstraintsMaxSplitInstance}]
	Assume the statement is false. 
	There exists an optimal robust $\fett{b}$-flow $\fett{f}=(f^1,f^2)$ such that for two successive arcs $a^c_{i}$ and $a^c_{i+1}$, $i\in [2n-2]$ the following holds true
	\begin{align}
		f^1(a^c_i) =f^2(a^c_i) < f^1(a^c_{i+1}) = f^2(a^c_{i+1}). \label{RFequ:proofassumptionflowOnfixedarcs}
	\end{align}
	For every $i^\prime\in [2n-2]$ and $\lambda\in \Lambda$ it holds
	\begin{align}
		f^\lambda(a^d_{i^\prime}) + f^\lambda(a^c_{i^\prime}) = f^\lambda(a^d_{i^\prime+1}) + f^\lambda(a^c_{i^\prime+1}) + f^\lambda(a^s_{i^\prime+1}) \label{RFequ:proofflowbalanceconstraints}
	\end{align}
	due to the flow balance constraints.
	Transforming equation~\eqref{RFequ:proofflowbalanceconstraints} and using the assumption~\eqref{RFequ:proofassumptionflowOnfixedarcs}, for $\lambda\in \Lambda$ we obtain 
	$$f^\lambda(a^d_i) > f^\lambda(a^c_{i+1}) - f^\lambda(a^c_{i}) =: \beta > 0. $$
	Let $\fett{\tilde{f}}=(\tilde{f}^1,\tilde{f}^2)$ be the robust $\fett{b}$-flow which results by redirecting $\beta$ units of flow $f^\lambda$, $\lambda\in \Lambda$ from arc $a^d_i$ to arc $a^c_i$, i.e., 
	\begin{align*}
		\tilde{f}^\lambda(a) = 
		\begin{cases}
			f^\lambda(a)+\beta & \text{ for arc }a=a^c_i,\\
			f^\lambda(a)-\beta & \text{ for arc }a=a^d_i,\\
			f^\lambda(a) & \text{ otherwise.}
		\end{cases}
	\end{align*}
	An example is visualized in Figure~\ref{RFFig:BalanceConstraintsMaxSplitInstance}.
	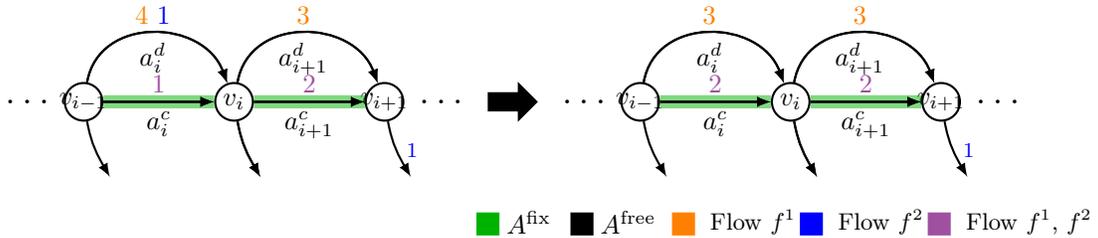
\begin{figure}[H]
		\centering
		\begin{tikzpicture}

\node(dots) at (-0.75,0){\Large $\ldots$};
\Knoten{(s)}{(0,0)}{$v_{i-1}$}{thick, minimum size=0.5cm}
\Knoten{(1)}{(2,0)}{$v_i$}{thick, minimum size=0.5cm}
\Knoten{(2)}{(4,0)}{$v_{i+1}$}{thick, minimum size=0.5cm}
\node(dots) at (4.75,0){\Large $\ldots$};

\draw[unKante,line width=5,  opacity=0.5,\gruen] (s) --(1);
\draw[thick,->, -latex, black] (s)  to [out=80, in=110, looseness=1.25]node[below]{$a^d_i$}(1); 
\draw[thick,->, -latex, black] (s)  to [out=80, in=110, looseness=1.25]node[above]{\textcolor{orange}{$4$}\ \textcolor{blue}{$1$}}(1); 
\draw[thick,->, -latex, black] (s)  --node[below]{$a^c_i$}(1);
\draw[thick,->, -latex, black] (s)  --node[above, Brombeer]{$1$}(1);  
\draw[unKante,line width=5,  opacity=0.5,\gruen] (1) --(2);
\draw[thick,->, -latex, black] (1)  to [out=80, in=110, looseness=1.25]node[below]{$a^d_{i+1}$}(2); 
\draw[thick,->, -latex, black] (1)  to [out=80, in=110, looseness=1.25]node[above, orange]{$3$}(2); 
\draw[thick,->, -latex, black] (1)  --node[below]{$a^c_{i+1}$}(2); 
\draw[thick,->, -latex, black] (1)  --node[above, Brombeer]{$2$}(2); 

\draw[thick,->, -latex, ] (s)  to [out=280, in=120]node[right, blue]{\footnotesize}($(s)+(0.35,-1)$); 
\draw[thick,->, -latex, ] (1)  to [out=280, in=120]node[right, blue]{\footnotesize}($(1)+(0.35,-1)$); 
\draw[thick,->, -latex, ] (1)  to [out=280, in=120]node[right, blue]{}($(1)+(0.35,-1)$); 
\draw[thick,->, -latex, ] (2)  to [out=280, in=120]node[right, blue]{\footnotesize $1$}($(2)+(0.35,-1)$); 

\Pfeil{(5,1.1)}{0.4}{fill=black}

\begin{scope}[shift={(7.4,0)}]
\node(dots) at (-0.75,0){\Large $\ldots$};
\Knoten{(s)}{(0,0)}{$v_{i-1}$}{thick, minimum size=0.5cm}
\Knoten{(1)}{(2,0)}{$v_i$}{thick, minimum size=0.5cm}
\Knoten{(2)}{(4,0)}{$v_{i+1}$}{thick, minimum size=0.5cm}
\node(dots) at (4.75,0){\Large $\ldots$};

\draw[unKante,line width=5,  opacity=0.5,\gruen] (s) --(1);
\draw[thick,->, -latex, black] (s)  to [out=80, in=110, looseness=1.25]node[below]{$a^d_i$}(1); 
\draw[thick,->, -latex, black] (s)  to [out=80, in=110, looseness=1.25]node[above]{\textcolor{orange}{$3$}}(1); 
\draw[thick,->, -latex, black] (s)  --node[below]{$a^c_i$}(1);
\draw[thick,->, -latex, black] (s)  --node[above, Brombeer]{$2$}(1);  
\draw[unKante,line width=5,  opacity=0.5,\gruen] (1) --(2);
\draw[thick,->, -latex, black] (1)  to [out=80, in=110, looseness=1.25]node[below]{$a^d_{i+1}$}(2); 
\draw[thick,->, -latex, black] (1)  to [out=80, in=110, looseness=1.25]node[above, orange]{$3$}(2); 
\draw[thick,->, -latex, black] (1)  --node[below]{$a^c_{i+1}$}(2); 
\draw[thick,->, -latex, black] (1)  --node[above, Brombeer]{$2$}(2); 

\draw[thick,->, -latex, ] (s)  to [out=280, in=120]node[right, blue]{\footnotesize}($(s)+(0.35,-1)$); 
\draw[thick,->, -latex, ] (1)  to [out=280, in=120]node[right, blue]{\footnotesize}($(1)+(0.35,-1)$); 
\draw[thick,->, -latex, ] (1)  to [out=280, in=120]node[right, blue]{}($(1)+(0.35,-1)$); 
\draw[thick,->, -latex, ] (2)  to [out=280, in=120]node[right, blue]{\footnotesize $1$}($(2)+(0.35,-1)$); 
\end{scope}

\begin{scope}[shift={(-5.5,1.75)}]
\begin{scope}[shift={(10.75,-3.5)}]
\Job{(0,0)}{(0.25,0.25)}{\gruen, \gruen}{}{}
\node[right] at (0.25,0.15){$A^{\text{fix}}$}; 
\end{scope}
\begin{scope}[shift={(12,-3.5)}]
\Job{(0,0)}{(0.25,0.25)}{black, black}{}{}
\node[right] at (0.25,0.15){$A^{\text{free}}$}; 
\end{scope}
\begin{scope}[shift={(13.35,-3.5)}]
\Job{(0,0)}{(0.25,0.25)}{orange, orange}{}{}
\node[right] at (0.25,0.15){\small{ Flow} $f^1$}; 
\end{scope}
\begin{scope}[shift={(15.05,-3.5)}]
	\Job{(0,0)}{(0.25,0.25)}{blue, blue}{}{}
	\node[right] at (0.25,0.15){\small{ Flow} $f^2$}; 
\end{scope}
\begin{scope}[shift={(16.75,-3.5)}]
	\Job{(0,0)}{(0.25,0.25)}{Brombeer, Brombeer}{}{}
	\node[right] at (0.25,0.15){\small{ Flow} $f^1$, $f^2$}; 
\end{scope}
\end{scope}
\end{tikzpicture}
		\caption{Example for the shift of flow from arc $a^d_i$ to arc $a^c_i$ such that $f^\lambda(a^c_i) =  f^\lambda(a^c_{i+1})$ holds for $\lambda\in \Lambda$ }
		\label{RFFig:BalanceConstraintsMaxSplitInstance}
	\end{figure}
	As $c(a^d_{i^\prime})>c(a^c_{i^\prime})$ holds for all $i^\prime\in [2n-1]$, we obtain a lower bound on the cost of flow $f^\lambda$, $\lambda\in \Lambda$ as follows
	\begin{align*}
		c(f^\lambda) 	&= \sum_{a\in A_n} c(a) \cdot f^\lambda(a) \\
		&= \sum_{a\in A_n\setminus \{a^c_i, a^d_i\}}  c(a) \cdot f^\lambda(a)  + c(a^c_i) \cdot f^\lambda(a^c_i) +  c(a^d_i) \cdot f^\lambda(a^d_i)  \\
		&> \sum_{a\in A_n\setminus \{a^c_i, a^d_i\}}  c(a) \cdot f^\lambda(a)  + c(a^c_i) \cdot \left( f^\lambda(a^c_i)+\beta\right)  +  c(a^d_i) \cdot \left(f^\lambda(a^d_i)-\beta\right)  \\
		&= \sum_{a\in A_n\setminus \{a^c_i, a^d_i\}}  c(a) \cdot \tilde{f}^\lambda(a)  +  c(a^c_i) \cdot \tilde{f}^\lambda(a^c_i) +  c(a^d_i) \cdot \tilde{f}^\lambda(a^d_i)  \\
		&= \sum_{a\in A_n} c(a) \cdot \tilde{f}^\lambda(a)  \\
		&= c(\tilde{f}^\lambda). 
	\end{align*}
	Thus, it holds $c(\fett{f})=\max\{c(f^1), c(f^2)\}>\max \{c(\tilde{f}^1), c(\tilde{f}^2)\}= c(\fett{\tilde{f}})$, which contradicts the assumption. 
\end{proof}

\begin{proof}[Proof of Lemma~\ref{RFlem:HilfslemmaCostOfFixedArcsForOptimalSol}]
	Before assuming the statement is false, we consider two robust $\boldsymbol{b}$-flows $\boldsymbol{f}=(f^1,f^2)$ and $\boldsymbol{\hat{f}}=(\hat{f}^1,\hat{f}^2)$ for instance $\mathcal{I}_n$ given as follows.
	
	For every $i\in [n]$, flows $f^1$ and $f^2$ send one unit along paths $p^1_{2i}$ and $p^2_{2i}$, respectively.
	The cost of flow $\fett{f}$ is $c(\boldsymbol{f}) = \max \{ c(f^1), c(f^2) \} $ with
	\begin{align*}  
		c(f^1)	
		&= \sum_{i=1}^{n} c(p^1_{2i}) 
		= \sum_{i=1}^{n} \left(nw(2^{n-i}+2^{n-1}-1) +s_{2i} \right)
		= nw\sum_{i=1}^{n} 2^{n-i}+ n^2w(2^{n-1}-1) +\sum_{i=1}^{n} s_{2i} \\
		&= nw(2^{n}-1)+ n^2w(2^{n-1}-1) +\sum_{i=1}^{n} s_{2i} 
	\end{align*}
	and 
	\begin{align*}  
		c(f^2)	
		&= \sum_{i=1}^{n} c(p^2_{2i}) 
		= \sum_{i=1}^{n} \left(nw (2^{i}-2)2^{n-i-1}  + s_{2i-1} + Fw \right)
		= nw \sum_{i=1}^{n} \left((2^{i}-2)2^{n-i-1} \right) +  \sum_{i=1}^{n}  s_{2i-1} + nw(2^{n+1}-n-2)\\
		&= n^2w2^{n-1}-nw(2^n-1) + \sum_{i=1}^{n}  s_{2i-1} + nw(2\cdot 2^{n}-n-2)
		= nw(2^{n}-1)+ n^2w(2^{n-1}-1) +\sum_{i=1}^{n} s_{2i-1}.
	\end{align*}
	As $s_{2i-1}\geq s_{2i}$ for all $i\in [n]$, it holds $c(f^1)\leq c(f^2)$. 
	
	For every $i\in [n]$, flows $\hat{f}^1$ and $\hat{f}^2$ send one unit along paths $p^1_{2i-1}$ and $p^2_{2i-1}$, respectively. 
	The cost of flow $\fett{\hat{f}}$ is $c(\boldsymbol{\hat{f}}) = \max \{ c(\hat{f}^1), c(\hat{f}^2) \} $ with
	\begin{align*}  
		c(\hat{f}^1)	
		& =  \sum_{i=1}^{n} c(p^1_{2i-1}) 
		= \sum_{i=1}^{n} \left(nw(2^{n-i}+2^{n-1}-1) +s_{2i-1} \right)
		= nw\sum_{i=1}^{n} 2^{n-i}+ n^2w(2^{n-1}-1) +\sum_{i=1}^{n} s_{2i-1} \\
		&= nw(2^{n}-1)+ n^2w(2^{n-1}-1) +\sum_{i=1}^{n} s_{2i-1} 
	\end{align*}
	and 
	\begin{align*}  
		c(\hat{f}^2)	
		&= \sum_{i=1}^{n} c(p^2_{2i-1}) 
		= \sum_{i=1}^{n} \left(nw (2^{i}-2)2^{n-i-1}  + s_{2i} + Fw \right)
		= nw \sum_{i=1}^{n} \left((2^{i}-2)2^{n-i-1} \right) +  \sum_{i=1}^{n}  s_{2i} + nw(2^{n+1}-n-2)\\
		&= n^2w2^{n-1}-nw(2^n-1) + \sum_{i=1}^{n}  s_{2i} + nw(2\cdot 2^{n}-n-2)
		= nw(2^{n}-1)+ n^2w(2^{n-1}-1) +\sum_{i=1}^{n} s_{2i}.
	\end{align*}
	As $s_{2i-1}\geq s_{2i}$ for all $i\in [n]$, it holds $c(\hat{f}^1)\geq c(\hat{f}^2)$.
	
	On the fixed arcs, flows $f^\lambda$ and $\hat{f}^{\lambda}$ with $\lambda\in \Lambda$ cause cost amounting to
	\begin{align*}
		\sum_{a\in A^{\fixhelp}_n}  c(a) \cdot f^\lambda(a) 	
		&= \sum^{n}_{i=1} c(a^c_{2i-1}) \cdot (n-i+1)   + 	\sum^{n-1}_{i=1} c(a^c_{2i}) \cdot (n-i) 			 						
		= 0 + nw(2^{n-1}n-2^n+1) = c^{\fixhelp}\\
		&= \sum^{n}_{i=1} c(a^c_{2i-1})\cdot (n-i) + \sum^{n-1}_{i=1} c(a^c_{2i})\cdot (n-i)  			
		= \sum_{a\in A^{\fixhelp}_{n}} c(a) \cdot \hat{f}^\lambda(a).
	\end{align*}
	
	Now assume there exists an optimal robust $\boldsymbol{b}$-flow $\boldsymbol{\tilde{f}}=(\tilde{f}^1, \tilde{f}^2)$ whose cost incurred on the fixed arcs, denoted by $c^{\fixhelp}_{\fett{\tilde{f}}}$, does not amount to $c^{\fixhelp}$, i.e., 
	$c^{\fixhelp}_{\fett{\tilde{f}}}= \sum_{i=1}^{2n-1}c(a^c_{i})\cdot \tilde{f}^1(a^c_{i}) = \sum_{i=1}^{n-1}c(a^c_{2i})\cdot \tilde{f}^1(a^c_{2i})\neq c^{\fixhelp}$.
	Firstly, assume that $\fett{\tilde{f}}=(\tilde{f}^1,\tilde{f}^2)$ is given such that the cost incurred on the fixed arcs $c^{\fixhelp}_{\fett{\tilde{f}}}$ is less than $c^{\fixhelp}$, i.e., $c^{\fixhelp}_{\fett{\tilde{f}}} < c^{\fixhelp}$. 
	As the costs of the fixed arcs are $c(a^c_{2i-1})=0$ for $i\in [n]$ and $c(a^c_{2i})=2^{n-i-1}nw$ for $i\in [n-1]$, it holds $c^{\fixhelp} - c^{\fixhelp}_{\fett{\tilde{f}}}\geq  2^0nw= nw\geq 2w$. 
	In the following, we focus on the first scenario flow $\tilde{f}^1$.
	Let $I$ denote the set of indices that indicate along which paths $p^1_i$ flow $\tilde{f}^1$ sends units, i.e., $I:=\{i\in [2n]\mid \tilde{f}^1(p^1_i)>0\}$. 
	If a unit is sent along path $p^1_i$, $i\in I$, integer $s_i$ is added to the total cost of flow $\tilde{f}^1$.
	The integers of all other arc cost add up to zero because of the telescope sum.  
	Accordingly, as $n$ units are sent from the source along paths $p^1_i$, $i\in I$ to the sink, exactly $n$ (not necessarily different) integers are added to the total cost of flow $\tilde{f}^1$. 
	For the first scenario, the minimum cost path is given by $p^1_{2n}$ as $c(a^c_i)\leq c(a^d_i)$ holds for all $i\in [2n-1]$.
	If flow $\tilde{f}^1$ sends $n$ units along the minimum cost path $p^1_{2n}=sa^c_1v_1a^c_2\ldots a^c_{2n-1}v_{2n-1}a^d_{2n}t_1$, the cost would be
	\begin{align}\label{RFexpr:bestCaseCost}
		n \cdot c(p^1_{2n})
		=n \left(  \sum_{i=1}^{2n-1} c(a^c_{i}) + c(a^d_{2n}) \right)
		=n \left( \sum_{i=1}^{n-1} c(a^c_{2i}) + c(a^d_{2n}) \right).
	\end{align}
	However, if $I\setminus \{2n\}\neq \emptyset$ holds, flow $\tilde{f}^1$ sends a number of units along different paths $p^1_{i}=sa^c_1v_1a^c_2\ldots a^c_{i-1}v_{i-1}a^{d}_{i}\ldots a^d_{2n}t_1$ for $i\in I\setminus \{2n\}$ (which use more detour instead of cross arcs). 
	If a detour arc $a^d_i$ is used instead of the corresponding cross arc $a^c_i$ for $i\in [2n-1]$, the cost rises by $ c(a^d_i)-c(a^c_i) = s_{i}-s_{i+1}+c(a^c_i) >0$. 
	Thus, comparing the cost incurred by sending $n$ units along path $p^1_{2n}$ with the cost incurred by sending $n$ units along different paths $p^1_{i}$ for $i\in I$, we obtain additional cost of
	\begin{align}\label{RFexpr:addidtionalCost}
		\sum_{i\in I} \left(s_i - s_{2n} + \sum_{j=i}^{2n-1} c(a^c_j)
		\right)\cdot \tilde{f}^1(p^1_i)
		&= \sum_{i\in I} \left(s_i - s_{2n} \right)\cdot\tilde{f}(p^1_i)+
		\sum^{2n-1}_{i=1}  c(a^c_i)
		\cdot \tilde{f}^1(a^d_i)\nonumber \\
		&= \sum_{i\in I} \left(s_i - s_{2n} \right)\cdot\tilde{f}(p^1_i) 
		+ \sum^{2n-1}_{i=1}   c(a^c_i)\cdot \left(n-\tilde{f}^1(a^c_i)\right) \nonumber\\
		&= \sum_{i\in I} \left(s_i - s_{2n} \right)\cdot\tilde{f}(p^1_i) 
		+  n\sum^{2n-1}_{i=1} c(a^c_i)
		- c^{\fixhelp}_{\fett{\tilde{f}}}.
	\end{align}
	Let $i^*$ be the index of the smallest integer, i.e., $i^* = \arg\min_{i\in [2n]} s_i$. 
	Combining expressions~\eqref{RFexpr:bestCaseCost} and~\eqref{RFexpr:addidtionalCost} and using the fact that $c^{\fixhelp} - 2w\geq c^{\fixhelp}_{\fett{\tilde{f}}}$ and $\sum_{i=1}^{n}s_{2i-1}<2w$ hold, we obtain a lower bound on the cost of flow $\tilde{f}^1$ as follows 
	\begin{align*}
		c(\tilde{f}^1) 	
		&= \sum_{i\in I}c(p^1_i)\cdot \tilde{f}^1(p^1_i)\\
		&=n \cdot c(p^1_{2n}) +
		\sum_{i\in I} \left(s_i - s_{2n} \right)\cdot\tilde{f}(p^1_i) 
		+  n\sum^{2n-1}_{i=1} c(a^c_i)
		- c^{\fixhelp}_{\fett{\tilde{f}}}
		\\
		&\geq  n \left( \sum_{i=1}^{n-1} c(a^c_{2i}) + c(a^d_{2n})  \right)  + n\cdot s_{i^*} -n \cdot s_{2n}  +n\sum_{i=1}^{n-1}c(a^c_{2i})- c^{\fixhelp}_{\fett{\tilde{f}}}\\
		&=  n \left( 2\sum_{i=1}^{n-1} c(a^c_{2i}) + c(a^d_{2n})-s_{2n} \right)+ n\cdot s_{i^*} - c^{\fixhelp}_{\fett{\tilde{f}}}\\
		&\geq n \left( 2\sum_{i=1}^{n-1} c(a^c_{2i})  + c(a^d_{2n})-s_{2n} \right)+ n\cdot s_{i^*}  - c^{\fixhelp} + 2w\\
		&> n \left( 2\sum_{i=1}^{n-1} c(a^c_{2i}) + c(a^d_{2n}) -s_{2n}\right)  - c^{\fixhelp} +  \sum_{i=1}^{n} s_{2i-1}\\
		&=  n \left( 2(2^{n-1}-1)nw + 2^0nw\right)  - nw(2^{n-1}n-2^n+1) + \sum_{i=1}^{n} s_{2i-1}  \\
		&= nw(2^{n}-1)+ n^2w(2^{n-1}-1) + \sum_{i=1}^{n} s_{2i-1} \\
		&=c(\hat{f}^1).
	\end{align*}
	We obtain $c(\fett{\tilde{f}})= \max\{c(\tilde{f}^1), c(\tilde{f}^2)\}\geq c(\tilde{f}^1) > c(\hat{f}^1) = c(\fett{\hat{f}})$, which contradicts the assumption. 
	
	Secondly, assume that $\fett{\tilde{f}}=(\tilde{f}^1,\tilde{f}^2)$ is given such that the cost incurred on the fixed arcs $c^{\fixhelp}_{\fett{\tilde{f}}}$ is greater than $c^{\fixhelp}$, i.e., $c^{\fixhelp}_{\fett{\tilde{f}}}>c^{\fixhelp}$.
	As the costs of the fixed arcs are $c(a^c_{2i-1})=0$ for $i\in [n]$ and $c(a^c_{2i})=2^{n-i-1}nw$ for $i\in [n-1]$, it holds $c^{\fixhelp}_{\fett{\tilde{f}}}-c^{\fixhelp}\geq 2^0nw= nw\geq 2w$. 
	In the following, we focus on the second scenario flow $\tilde{f}^2$.
	Let $i^*$ be the index of the smallest integer, i.e., $i^* = \arg\min_{i\in [2n]} s_i$. 
	We note that at least cost of $n (s_{i^*}+Fw)$ is incurred by the free shortcut arcs in the second scenario. 
	As $c^{\fixhelp}_{\fett{\tilde{f}}}\geq 2w+c^{\fixhelp}$ and 
	$\sum_{i=1}^{n} s_{2i-1}<2w$ hold, we obtain a lower bound on the cost of flow $\tilde{f}^2$ as follows 
	\begin{align*}
		c(\tilde{f}^2) 	
		&\geq  n( s_{i^*} + Fw) + c^{\fixhelp}_{\fett{\tilde{f}}}  
		\geq  n \cdot s_{i^*} + nFw + 2w + c^{\fixhelp}  
		>   nFw + \sum_{i=1}^{n} s_{2i-1} + c^{\fixhelp} \\
		&
		=  nw(2^{n}-1)+ n^2w(2^{n-1}-1) + \sum_{i=1}^{n} s_{2i-1}
		= c(f^2). 
	\end{align*}
	We obtain $c(\fett{\tilde{f}})= \max\{c(\tilde{f}^1), c(\tilde{f}^2)\}\geq c(\tilde{f}^2) > c(f^2) = c(\fett{f})$, which contradicts the assumption. 
	
	Consequently, the cost incurred by an optimal robust $\fett{b}$-flow $\fett{f}=(f^1,f^2)$ on the fixed arcs is $c^{\fixhelp}$.
\end{proof}

\begin{proof}[Proof of Lemma~\ref{RFlem:HilfslemmaFlowValueOnFixedArcsOfOptimalSol}]
	We assume that there exists an optimal robust $\fett{b}$-flow $\fett{\tilde{f}}=(\tilde{f}^1,\tilde{f}^2)$ with $\tilde{f}^1(a^c_{2i})=\tilde{f}^2(a^c_{2i})\neq n-i$ for at least one fixed arc $a^c_{2i}\in A_n^{\fixhelp}$.
	We define variables $x_i:=\tilde{f}^1(a^c_{2i})= \tilde{f}^2(a^c_{2i})$ for all $i\in [n-1]$. 
	On all fixed arcs, the flow is limited by the supply that is sent from the unique source to the single sink, i.e., $0\leq x_i\leq n$ for $i\in [n-1]$. 
	By Lemma~\ref{RFlem:BalanceConstraintsMaxSplitInstance}, it holds $x_i \geq x_{i+1}$ for $i\in [n-2]$.
	By Lemma~\ref{RFlem:HilfslemmaCostOfFixedArcsForOptimalSol}, the cost incurred on the fixed arcs is $c^{\text{fix}}$.
	As $c(a^c_{2i-1})=0$ holds for arcs $a^c_{2i-1}\in A_n^{\fixhelp}$, only the cost incurred by the arcs $a^c_{2i}\in A_n^{\fixhelp}$ contributes to $c^{\text{fix}}$.
	Consequently, the following must hold true
	\begin{align}
		\sum_{i=1}^{n-1} c(a^c_{2i}) \cdot  x_i 
		= \sum_{i=1}^{n-1} 2^{n-i-1}nw\cdot x_i  
		=  c^{\text{fix}} = nw(2^{n-1}n-2^n+1).\label{RFequ:onlyOneSolutionForFlowEqu}
	\end{align}
	Overall, variables $x_i$, $i\in [n-1]$ satisfy the requirements of Theorem~\ref{RFtheorem:Potenzen} in Appendix~\ref{RF:AppendixB}.
	We obtain that the only solution to equation~\eqref{RFequ:onlyOneSolutionForFlowEqu} is $x_i=(n-i)$ for all $i\in [n-1]$. 
	Thus, it holds $\tilde{f}^1(a^c_{2i}) =\tilde{f}^2(a^c_{2i}) = x_i = n-i$ for fixed arcs $a^c_{2i}\in \fix_n$, which contradicts the assumption. 
	By Lemma~\ref{RFlem:BalanceConstraintsMaxSplitInstance} and due to the flow balance constraints, it follows $\tilde{f}^1(a^c_{2i-1}) = \tilde{f}^2(a^c_{2i-1})  \in \{n-i,n-i+1\}$ for fixed arcs $a^c_{2i-1}\in A_n^{\fixhelp}$.
\end{proof}
\section{Appendix}
\label{RF:AppendixB}
\begin{theorem}\label{RFtheorem:Potenzen}
	Let non-negative variables $x_1,\ldots,x_{n-1}\in \{0,\ldots,n\}$ be given with a non-ascending restriction $x_i\geq x_{i+1}$ for all $i\in [n-2]$. 
	The equation 
	\begin{align*}
		\sum_{i=1}^{n-1} 2^{n-i-1}\cdot x_i = 2^{n-1}n-2^n+1
	\end{align*}
	has $x_i=n-i$ for all $i\in [n-1]$ as its only solution. 
\end{theorem}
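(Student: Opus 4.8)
The plan is to prove uniqueness by a descending induction that pins down the variables one at a time, exploiting the fact that the coefficients $2^{n-i-1}$ grow geometrically while each variable is bounded by $n$. First I would rewrite the target value in a form that reveals the intended solution: substituting $x_i = n-i$ gives $\sum_{i=1}^{n-1} 2^{n-i-1}(n-i)$, and a routine telescoping/geometric computation confirms this equals $2^{n-1}n - 2^n + 1$, so the claimed solution is at least \emph{a} solution. The content of the theorem is that it is the \emph{only} one.

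For uniqueness, I would argue on the highest-index variable first. Observe that $\sum_{i=1}^{n-2} 2^{n-i-1} x_i$ is always even (every coefficient with $i \le n-2$ is divisible by $2$), whereas the coefficient of $x_{n-1}$ is $2^{0}=1$. Reducing the equation modulo $2$ forces $x_{n-1} \equiv 2^{n-1}n - 2^n + 1 \equiv 1 \pmod 2$ (for $n \ge 2$), so $x_{n-1}$ is odd; combined with the monotonicity chain $x_1 \ge x_2 \ge \dots \ge x_{n-1} \ge 0$ and the upper bound $x_i \le n$, this is not yet enough on its own, so the cleaner route is a bounding argument. The key estimate: if $x_{n-1} \ge 2$, then since $x_i \ge x_{n-1} \ge 2$ for all $i$, the left-hand side is at least $2\sum_{i=1}^{n-1}2^{n-i-1} = 2(2^{n-1}-1) = 2^n - 2$, but I want a contradiction in the other direction — so instead I would bound \emph{above}: the maximal value of the left-hand side subject to $x_i \le n$ is $n(2^{n-1}-1)$, which exactly equals the target plus $(2^{n-1}-1)\cdot 0$... this forces a careful accounting. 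The correct approach is: write $x_i = n - i - d_i$ where $d_i \ge 0$ (since $x_i \le n - i$ must hold — this itself needs justification from monotonicity and the value), substitute, and show $\sum 2^{n-i-1} d_i = 0$, whence all $d_i = 0$.

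So the heart of the argument is establishing $x_i \le n-i$ for every $i$. I would prove this by downward induction on $i$: the value $2^{n-1}n - 2^n + 1$ is strictly less than $n \cdot 2^{n-1}$, and since $x_1$ carries the coefficient $2^{n-2}$ while $x_1$ is the largest variable, if $x_1 \ge n-1+1 = n$ then... Actually the slick version uses the "carry-free" nature of the base-$2$-like representation: since $x_i \le n$ and the coefficients are $2^{n-2}, 2^{n-3}, \dots, 2^0$, and $x_{i+1} \le x_i$, one shows greedily that the only way to hit the exact target is to take $x_i$ as large as possible consistent with not overshooting, which yields $x_i = n-i$. I would formalize this as: assume $x_j = n-j$ for all $j < i$ (vacuous base case $i=1$), then the residual equation for $x_i, \dots, x_{n-1}$ reads $\sum_{k=i}^{n-1} 2^{n-k-1} x_k = 2^{n-i}\cdot(\text{something})$, and the monotone bound $x_i \ge x_{i+1} \ge \dots$ together with $x_i \le n$ squeezes $x_i$ to the unique value $n-i$; then monotonicity propagates the bound downward.

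The main obstacle I anticipate is making the "squeeze" step fully rigorous rather than hand-wavy: I need to show that after fixing $x_1, \dots, x_{i-1}$ to their forced values, the remaining sum $\sum_{k \ge i} 2^{n-k-1} x_k$ with constraints $n \ge x_i \ge x_{i+1} \ge \dots \ge 0$ can equal the required residual $R_i$ only when $x_i = n-i$. The clean way is to note $R_i$ satisfies $(n-i-1)(2^{n-i}-1) < R_i$... err, rather $R_i = (n-i)2^{n-i-1} - 2^{n-i} + 1 + \text{lower order}$; I'd compute $R_i$ explicitly as $2^{n-i-1}(n-i) - 2^{n-i} + 1$ by the same telescoping identity, then argue: the contribution of $x_i$ alone is at most $2^{n-i-1} \cdot x_i$, and the tail $\sum_{k>i} 2^{n-k-1}x_k \le x_i \sum_{k>i}2^{n-k-1} = x_i(2^{n-i-1}-1)$, while also $\ge 0$; so $2^{n-i-1}x_i \le R_i \le 2^{n-i-1}x_i + x_i(2^{n-i-1}-1)$, and since $R_i < 2^{n-i-1}(n-i+1)$ one gets $x_i \le n-i$, and since $R_i \ge 2^{n-i-1}x_i$ forces... combining, the divisibility of $R_i + 2^{n-i} - 1$ issues resolve it to $x_i = n-i$ exactly. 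I expect roughly a page of careful but elementary inequality bookkeeping, with the modular/divisibility observation and the geometric-series bound $\sum_{k>i}2^{n-k-1} = 2^{n-i-1}-1$ doing the real work.
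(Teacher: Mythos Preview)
Your squeeze step contains a concrete error. With $m := n-i$, and having fixed $x_j = n-j$ for $j < i$, the residual is $R_i = \sum_{k=i}^{n-1}2^{n-k-1}(n-k) = (m-1)2^m + 1$, and your claimed inequality $R_i < 2^{n-i-1}(n-i+1) = 2^{m-1}(m+1)$ rearranges to $m - 3 + 2^{1-m} < 0$, which fails for every $m \ge 3$. The two legitimate bounds you do have --- the lower bound $x_i \ge R_i/(2^m - 1) > m-1$ from $x_k \le x_i$ on the tail, and the upper bound $x_i \le x_{i-1} = m+1$ from the inductive hypothesis --- together give only $x_i \in \{m, m+1\}$. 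Ruling out $x_i = m+1$ means showing that the shifted tail equation $\sum_{k>i}2^{n-k-1}x_k = (m-3)2^{m-1}+1$ has no monotone non-negative solution with $x_{i+1} \le m+1$; but this is another instance of the same shape with a \emph{different} right-hand side, so your induction does not close as stated, and neither the parity observation nor the geometric-series tail bound resolves it.

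The paper takes a genuinely different route. It passes to consecutive differences $y_j \ge 0$ of the reversed sequence (so that $\sum_j y_j = x_1 \le n$), under which the equation becomes $2^{n-1}\sum_j y_j - \sum_j 2^j y_j = 2^{n-1}n - 2^n + 1$. The crucial first step --- with no analogue in your sketch --- is a separate lemma proving $\sum_j y_j = n-1$ (equivalently $x_1 = n-1$, eliminating precisely the case $x_1 = n$ that your squeeze cannot exclude). This is done via a ``carry'' transformation $y_k \mapsto y_k - 2,\ y_{k+1} \mapsto y_{k+1}+1$, which preserves $\sum_j 2^j y_j$ while strictly decreasing $\sum_j y_j$, together with a short case analysis on the terminal configuration. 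Once $\sum_j y_j = n-1$ is known, the equation reduces to $\sum_{j=0}^{n-2}2^j y_j = 2^{n-1}-1$, and the same carry device forces $y_j = 1$ for all $j$.
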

To prove Theorem~\ref{RFtheorem:Potenzen}, we use the following auxiliary lemma. 
\begin{lemma}\label{RFlem:Aequivalenz}
The following statements are equivalent. 
	\begin{itemize}
			\item[(S1)] Let non-negative variables $x_1,\ldots,x_{n-1}\in \{0,\ldots,n\}$ be given with a non-ascending restriction $x_i\geq x_{i+1}$ for all $i\in [n-2]$. 
			The equation 
			\begin{align}\label{RFequ:x}
				\sum_{i=1}^{n-1} 2^{n-i-1}\cdot x_i = 2^{n-1}n-2^n+1 \tag{E1}
			\end{align}
			has $x_i=n-i$ for all $i\in [n-1]$ as its only solution. 
			\item[(S2)] Let non-negative variables $\overline{x}_0,\ldots,\overline{x}_{n-2}\in \{0,\ldots,n\}$ be given with a non-descending restriction $\overline{x}_i\leq \overline{x}_{i+1}$ for all $i\in \{0,\ldots,n-3\}$. 
			The equation 
			\begin{align}\label{RFequ:xquer}
				\sum_{i=0}^{n-2} 2^{i}\cdot \overline{x}_i = 2^{n-1}n-2^n+1 \tag{E2}
			\end{align}
			has $\overline{x}_i=i+1$ for all $i\in \{0,\ldots,n-2\}$ as its only solution.
			\item[(S3)] Let non-negative variables $y_0,\ldots,y_{n-2}\in \{0,\ldots,n\}$ be given with $\sum_{i=0}^{n-2}y_i \leq n $. The equation 
			\begin{align}\label{RFequ:y}
				2^{n-1}\sum_{i=0}^{n-2} y_i - \sum_{i=0}^{n-2} 2^i y_i= 2^{n-1}n-2^n+1 \tag{E3}
			\end{align}
			has $y_i=1$ for all $i\in \{0,\ldots,n-2\}$ as its only solution. 
	\end{itemize}
\end{lemma}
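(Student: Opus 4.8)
The plan is to prove Lemma~\ref{RFlem:Aequivalenz} by establishing the equivalences via two change-of-variable substitutions, showing $(S1)\Leftrightarrow(S2)$ and $(S2)\Leftrightarrow(S3)$; together these give the full cycle. The whole statement is really a chain of bijective reparametrizations of one and the same Diophantine equation, so the work is almost entirely bookkeeping. First I would prove $(S1)\Leftrightarrow(S2)$ by reversing the index: set $\overline{x}_i := x_{n-1-i}$ for $i\in\{0,\ldots,n-2\}$. Then $i$ ranges over $\{0,\ldots,n-2\}$ exactly when $n-1-i$ ranges over $[n-1]$, the box constraint $x_j\in\{0,\ldots,n\}$ becomes $\overline{x}_i\in\{0,\ldots,n\}$, and the non-ascending condition $x_j\ge x_{j+1}$ for $j\in[n-2]$ translates into the non-descending condition $\overline{x}_i\le\overline{x}_{i+1}$ for $i\in\{0,\ldots,n-3\}$. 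In the sum, $2^{n-i-1}x_i = 2^{n-(n-1-\ell)-1}\overline{x}_\ell = 2^{\ell}\overline{x}_\ell$ after writing $\ell = n-1-i$, so \eqref{RFequ:x} becomes exactly \eqref{RFequ:xquer}, and the candidate solution $x_i = n-i$ becomes $\overline{x}_\ell = n-(n-1-\ell) = \ell+1$. Since the substitution is an involution on the feasible sets, uniqueness transfers in both directions.

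Next I would prove $(S2)\Leftrightarrow(S3)$ by the affine shift $\overline{x}_i = (i+1) - z_i$ or, more precisely, by relating $(S2)$'s variables to $(S3)$'s via $y_i := \text{(something recording the decrement)}$; the cleanest route is to observe that under $\overline{x}_i \le \overline{x}_{i+1}$ one can write the non-descending sequence as a sum of ``indicator-of-level'' contributions. Concretely, set $y_j$ for $j\in\{0,\ldots,n-2\}$ so that $\overline{x}_i = \sum_{j: \text{(level $j$ reached by step $i$)}} y_j$; then $\sum_{i=0}^{n-2}2^i\overline{x}_i$ rearranges to $\sum_j y_j \sum_{i\ge j}2^i = \sum_j y_j(2^{n-1}-2^j)$, which is exactly $2^{n-1}\sum_j y_j - \sum_j 2^j y_j$, the left side of \eqref{RFequ:y}. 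The monotonicity constraint $\overline{x}_i\le\overline{x}_{i+1}$ becomes $y_j\ge 0$, the box constraint $\overline{x}_i\le n$ (at $i=n-2$, the maximum) becomes $\sum_j y_j \le n$, and the candidate $\overline{x}_i = i+1$ corresponds to $y_j = 1$ for all $j$. I would spell out this correspondence as an explicit bijection between $\{\overline{x}\}$ and $\{y\}$ satisfying the respective constraints, so that again uniqueness of one solution is equivalent to uniqueness of the other.

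The main obstacle — though it is more a matter of care than of genuine difficulty — is getting the index ranges and the constraint translations exactly right in the second substitution, since $(S3)$ replaces the chain of inequalities $\overline{x}_i\le\overline{x}_{i+1}$ plus an upper box by a single aggregate bound $\sum y_i\le n$, and one has to check that no feasible $y$ is lost and no infeasible one is gained. A secondary subtlety is that the substitution for $(S2)\Leftrightarrow(S3)$ is only a bijection after one fixes the right bookkeeping for the top value $\overline{x}_{n-2}$; I would verify that the box constraint $\overline{x}_i\in\{0,\ldots,n\}$ is automatically implied for $i<n-2$ once $\overline{x}_{n-2}\le n$ and monotonicity hold, so that the lone surviving constraint in $(S3)$ is indeed $\sum_i y_i\le n$. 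Once the three formulations are shown equivalent, the lemma is proved; the actual verification that $(S3)$ has $y_i\equiv 1$ as its unique solution is deferred to Theorem~\ref{RFtheorem:Potenzen}, which $(S3)$ is tailored to feed into, and is not part of this lemma.
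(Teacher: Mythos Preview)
Your proposal is correct and follows essentially the same route as the paper: the index reversal $\overline{x}_i = x_{n-1-i}$ for $(S1)\Leftrightarrow(S2)$, and the prefix-sum/difference substitution $y_0=\overline{x}_0$, $y_i=\overline{x}_i-\overline{x}_{i-1}$ (equivalently $\overline{x}_i=\sum_{j\le i}y_j$) for $(S2)\Leftrightarrow(S3)$, together with the observation that monotonicity of $\overline{x}$ corresponds to nonnegativity of $y$ and the top box constraint $\overline{x}_{n-2}\le n$ corresponds to $\sum_j y_j\le n$. Your remark that the lower box constraints are automatic once $\overline{x}_{n-2}\le n$ and monotonicity hold is exactly the point needed to make the bijection two-sided.
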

\begin{proof}
Firstly, by shifting the index of the left hand-side of equation~\eqref{RFequ:x} and summing up the other way around, we obtain 
\begin{align*}
\sum_{i=1}^{n-1} 2^{n-i-1}\cdot x_i 
= \sum_{i=0}^{n-2} 2^{n-i-2}\cdot x^\prime_i 
= \sum_{i=0}^{n-2} 2^{i}\cdot \overline{x}_i  
\end{align*}
with $x^\prime_i := x_{i+1}$ for all $i\in \{0,\ldots,n-2\}$ and $\overline{x}_i := x^\prime_{n-2-i}$ for all $i\in \{0,\ldots,n-2\}$. 
If $x_i \geq x_{i+1}$ is true for all $i\in [n-2]$, it holds $\overline{x}_i \leq \overline{x}_{i+1}$ for all $i\in \{0,\ldots,n-3\}$. 
The assignment $x_i=n-i$, $i\in [n-1]$ is the only solution to equation~\eqref{RFequ:x} if and only if $\overline{x}_i=i+1$, $i\in \{0,\ldots,n-2\}$ is the only solution to equation~\eqref{RFequ:xquer}. 
Consequently, statement~(S1) is equivalent to statement~(S2). 

We define new variables $y_0:=\overline{x}_0$ and $y_i := \overline{x}_i - \overline{x}_{i-1}$ for all $i\in [n-2]$. 
Because of $\overline{x}_0\geq 0$ and $\overline{x}_i\leq\overline{x}_{i+1}$ for $i\in \{0,\ldots,n-3\}$, it holds $y_i\geq 0$ for all $i\in \{0,\ldots,n-2\}$. 
Furthermore, for all $i\in \{0,\ldots,n-2\}$ it holds $\overline{x}_i = \sum_{j=0}^{i}y_j$
and as $\overline{x}_{n-2}\in \{0,\ldots,n\} $ we obtain $\sum_{i=0}^{n-2}y_i\leq n$. 
Using this, we obtain
\begin{align*}
\sum_{i=0}^{n-2} 2^{i}\cdot \overline{x}_i 
				=\sum_{i=0}^{n-2} 2^{i}\cdot \left(\sum_{j=0}^{i} y_j\right) 
				=\sum_{j=0}^{n-2}  \left(\sum_{i=j}^{n-2} 2^{i} \right)y_j 
				=\sum_{j=0}^{n-2}  \left(2^{n-1}- 2^{j} \right)y_j 
				=2^{n-1} \sum_{j=0}^{n-2}  y_j - \sum_{j=0}^{n-2} 2^{j} y_j.
\end{align*}
The assignment $\overline{x}_i=i+1$, $i\in \{0,\ldots,n-2\}$ is the only solution to equation \eqref{RFequ:xquer} if and only if $y_0=\overline{x}_0=1$ and $y_i = \overline{x}_i - \overline{x}_{i-1} = i+1 - i = 1 $ is the only solution to equation \eqref{RFequ:y}. 
Consequently, statement~(S2) is equivalent to statement~(S3). 
\end{proof}
%
In the following, we focus on statement~(S3) of Lemma~\ref{RFlem:Aequivalenz} to prove Theorem~\ref{RFtheorem:Potenzen}.  
For the proof, we use Algorithm~\ref{RFalgo:TransformationOfSolutions} to transform solutions to equation 
\begin{align}
	\sum_{i=0}^{n-2} 2^i y_i= 2^n-1. \label{RFequ:equAlg}
\end{align}
\begin{algorithm*}
	\caption{}\label{RFalgo:TransformationOfSolutions}
\begin{algorithmic}[1] 
	\State \algorithmicrequire \ solution $\fett{y}=(y_0,\ldots,y_{n-2})\in \mathbb{Z}^{n-1}_{\geq 0}$ and parameter $\ell \in \mathbb{Z}_{\geq 2}$
	\State Set $j=1$ and $\fett{y^1} : = \fett{y}$
	\Procedure{}{}
	\While{ $\exists \ k \in \{0,\ldots,n-3\}: \ y^j_k\geq 2$ }
	\State Let $k$ be minimal
	\State Set 
	$y^{j+1}_i =
	\begin{cases}
		y^j_{k} -2 		& \text{ for }i=k \\
		y^j_{k+1} +1		& \text{ for }i=k+1\\
		y^j_i							& \text{ otherwise }
	\end{cases}$
	\State Set $j=j+1$
	\If {$y^{j+1}_{n-2} = \ell $}
	\State \textbf{break}
	\EndIf
	\EndWhile \\
	\Return $\fett{y^j}=(y^j_0,\ldots,y^j_{n-2})$
	\EndProcedure
\end{algorithmic}
\end{algorithm*}
More precisely, the algorithm transforms an arbitrary solution $\fett{y}= \fett{y^1}$ to equation~\eqref{RFequ:equAlg} after $j-1$ steps to a different solution $\fett{y^j}$ to equation~\eqref{RFequ:equAlg} with $y^j_{n-2}=\ell$ or $y^j_i\in \{0,1\}$ for $i\in \{0,\ldots,n-3\}$ and $y^j_{n-2}\in \{0,1,\ldots,\ell-1\}$ such that the sum of the entries is reduced by $j-1$, i.e., 
\begin{align*}
	\sum_{i=0}^{n-2} y^j_i = \sum_{i=0}^{n-2} y^1_i - (j-1). 
\end{align*} 
The algorithm is used in the next lemma to show that a solution assumed to exist does not satisfy equation~\eqref{RFequ:y}. 
\begin{lemma}\label{RFlem:erstesHilfslemma}
	Let $\fett{y}=(y_0,\ldots,y_{n-2}) \in \{0,\ldots,n\}^{n-1}$ with $\sum_{i=0}^{n-2}y_i \leq n $ be a solution to the equation 
	\begin{align}\label{RFequ:erstesHilfslemma}
		2^{n-1}\sum_{i=0}^{n-2} y_i - \sum_{i=0}^{n-2} 2^i y_i= 2^{n-1}n-2^n+1. \tag{E3}
	\end{align} 
	Then, it holds $\sum_{i=0}^{n-2}y_i = n-1 $. 
\end{lemma}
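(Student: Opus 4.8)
The plan is to turn equation~\eqref{RFequ:erstesHilfslemma} into a statement about the binary representation of $2^n-1$ and then to invoke Algorithm~\ref{RFalgo:TransformationOfSolutions}. First I would set $S:=\sum_{i=0}^{n-2}y_i$ and rewrite~\eqref{RFequ:erstesHilfslemma} as
\[
\sum_{i=0}^{n-2}2^{i}y_i \;=\; 2^{n-1}(S-n+2)-1 .
\]
Since the left-hand side is non-negative and $2^{n-1}\ge 1$, this forces the integer $S-n+2$ to be at least $1$, i.e.\ $S\ge n-1$; together with the hypothesis $S\le n$ only the two cases $S=n-1$ and $S=n$ remain, so it suffices to exclude $S=n$.

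Assume $S=n$. Then the displayed identity becomes $\sum_{i=0}^{n-2}2^{i}y_i=2^{n}-1$, so $\fett{y}$ is a solution of equation~\eqref{RFequ:equAlg}. Here I would apply Algorithm~\ref{RFalgo:TransformationOfSolutions} to $\fett{y^1}:=\fett{y}$ with parameter $\ell:=n+2$. Each iteration replaces a pair $(y_k,y_{k+1})$ by $(y_k-2,\,y_{k+1}+1)$ for the minimal $k\in\{0,\dots,n-3\}$ with $y_k\ge 2$; this keeps all entries non-negative, leaves $\sum_i 2^{i}y_i$ unchanged (because $2^{k+1}=2\cdot 2^{k}$), and strictly decreases $\sum_i y_i$ by $1$. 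As $\sum_i y_i$ is a strictly decreasing sequence of positive integers, the procedure terminates. Moreover the invariant $\sum_i 2^{i}y^{j}_i=2^{n}-1$ yields $y^{j}_{n-2}\le\lfloor (2^{n}-1)/2^{n-2}\rfloor=3<\ell$, so the break with condition $y^{j+1}_{n-2}=\ell$ never fires; hence the algorithm halts at a solution $\fett{y^*}$ of~\eqref{RFequ:equAlg} with $y^{*}_k\in\{0,1\}$ for all $k\in\{0,\dots,n-3\}$ and with $\sum_i y^{*}_i=\sum_i y_i-(j-1)\le S=n$.

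Next I would determine $\fett{y^*}$ exactly. From $y^{*}_k\le 1$ for $k\le n-3$ we get $\sum_{k=0}^{n-3}2^{k}y^{*}_k\le\sum_{k=0}^{n-3}2^{k}=2^{n-2}-1$, hence $2^{n-2}y^{*}_{n-2}=2^{n}-1-\sum_{k=0}^{n-3}2^{k}y^{*}_k\ge 3\cdot 2^{n-2}$, so $y^{*}_{n-2}\ge 3$; with $y^{*}_{n-2}\le 3$ this gives $y^{*}_{n-2}=3$. Then $\sum_{k=0}^{n-3}2^{k}y^{*}_k=2^{n}-1-3\cdot 2^{n-2}=2^{n-2}-1=\sum_{k=0}^{n-3}2^{k}$, and by uniqueness of the binary representation with digits in $\{0,1\}$ we obtain $y^{*}_k=1$ for all $k\in\{0,\dots,n-3\}$. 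Consequently $\sum_i y^{*}_i=(n-2)+3=n+1$, contradicting $\sum_i y^{*}_i\le n$. Therefore the case $S=n$ cannot occur and $S=n-1$, which is the claim.

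The step I expect to be the main obstacle is making the analysis of $\fett{y^*}$ fully rigorous while relying only on the stated behaviour of Algorithm~\ref{RFalgo:TransformationOfSolutions}: namely that it terminates, that every $\fett{y^j}$ stays a non-negative solution of~\eqref{RFequ:equAlg}, that the digit sum decreases by exactly one per step, and that the break is inactive for the chosen $\ell$. One must also verify that the terminal-shape argument goes through uniformly for all $n\ge 2$, in particular in the small range $n\in\{2,3\}$ where the index set $\{0,\dots,n-3\}$ is empty or a singleton; in those cases the conclusion $y^{*}_{n-2}=3$ (and hence $\sum_i y^{*}_i=n+1$) must simply be read off directly from $\sum_{i=0}^{n-2}2^{i}y^{*}_i=2^{n}-1$, which the inequalities above still deliver with the empty sum interpreted as $0=2^{0}-1$ when $n=2$.
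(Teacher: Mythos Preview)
Your proof is correct and follows essentially the same route as the paper's: reduce to the two cases $S\in\{n-1,n\}$, and in the case $S=n$ use Algorithm~\ref{RFalgo:TransformationOfSolutions} to bring the solution of $\sum_i 2^i y_i=2^n-1$ into a terminal form that forces the contradiction $\sum_i y^*_i=n+1>n$.

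The differences are purely in execution, and yours is a bit cleaner in two places. First, for the lower bound you rewrite~\eqref{RFequ:erstesHilfslemma} as $\sum_i 2^i y_i=2^{n-1}(S-n+2)-1$ and read off $S\ge n-1$ directly; the paper instead handles $S\le n-2$ by a separate inequality chain. Second, and more substantially, you choose $\ell=n+2$ so that the bound $y^{j}_{n-2}\le\lfloor(2^n-1)/2^{n-2}\rfloor=3<\ell$ guarantees the break never fires; this lets you avoid the paper's preliminary case split on whether the minimal $k$ with $y_k\ge 2$ equals $n-2$ and its further subcases $y_{n-2}\ge 4$ versus $y_{n-2}\in\{2,3\}$ (the paper uses $\ell=4$, so it must treat the possibility that the break triggers). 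Your terminal analysis---pinning down $y^*_{n-2}=3$ and then $y^*_k=1$ for $k\le n-3$ via uniqueness of the binary expansion of $2^{n-2}-1$---is the same identification the paper makes of the comparison vector $(1,\dots,1,3)$, just argued more explicitly. The boundary cases $n\in\{2,3\}$ that you flag are indeed handled by your inequalities with the empty-sum convention, exactly as you note.
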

\begin{proof}
	Assume the statement is false, then it either holds $\sum_{i=0}^{n-2} y_i \leq n-2$ or $\sum_{i=0}^{n-2} y_i = n$. 
	In the first case, we obtain 
	\begin{align*}
		2^{n-1}n - 2^n+1
		=2^{n-1} \sum_{i=0}^{n-2} y_i  -  \sum_{i=0}^{n-2} 2^i y_i 
		\leq 2^{n-1} (n-2)  -  \sum_{i=0}^{n-2} 2^i y_i 
		< 2^{n-1} (n-2) +1 
		= 2^{n-1}n - 2^n+1,
	\end{align*}
	which is a contradiction.
	In the second case, the equation~\eqref{RFequ:erstesHilfslemma} reduces to the following equivalent equation
	\begin{align}
			2^{n-1}n - \sum_{i=0}^{n-2} 2^i y_i= 2^{n-1}n-2^n+1
			\Leftrightarrow
			\sum_{i=0}^{n-2} 2^i y_i= 2^n-1.\label{RFequ:erstesHilfslemmaNewEqu}
	\end{align}
	For a solution $\boldsymbol{y}\in \mathbb{Z}^{n-1}_{\geq 0}$ to equation~\eqref{RFequ:erstesHilfslemmaNewEqu} there exists a $k\in \{0,\ldots,n-2\}$ such that $y_k\geq 2$ holds by the pigeonhole principle. 
	Let $k$ be minimal. 
	If $k=n-2$, we distinguish between the following two cases. 
	\begin{itemize}
		\item[]1. Case  $y_{n-2}\geq 4$:
		We obtain
		\begin{align*}
			 2^n-1 = \sum_{i=0}^{n-2} 2^i y_i \geq 2^{n-2} y_{n-2} = 2^{n-2}\cdot 4 = 2^n > 2^n -1,
		\end{align*}
		which is a contradiction. 
		\item[]2. Case $y_{n-2} \in \{2,3\}$:
		We note that $\fett{y^\prime}=(y^\prime_0, \ldots,y^\prime_{n-2})=(1,\ldots,1,3)$ satisfies $\sum_{i=0}^{n-2} 2^i y^\prime_i = 2^n -1 $ but it holds $\sum_{i=0}^{n-2} y^\prime_i = n-2+3= n+1$. 
		As $k=n-2$ is minimal, solution $\fett{y}$ consists of $y_i\in \{0,1\}$ for $i\in \{0,\ldots,n-3\}$ and $y_{n-2}\in \{2,3\}$. 
		As $\sum_{i=0}^{n-2} y_i = n< n+1 = \sum_{i=0}^{n-2} y^\prime_i $ is true, it holds $y_i=0$ for at least one $i\in \{0,\ldots,n-3\}$ or it holds $y_{n-2}=2$. 
		In both cases, we obtain
		\begin{align*}
			2^n-1
			=\sum_{i=0}^{n-2} 2^i y_i
			< \sum_{i=0}^{n-2} 2^i y^\prime_i
			= 2^{n}-1,
		\end{align*}
		which is a contradiction.
	\end{itemize}
	If $0\leq k<n-2$, we apply Algorithm~\ref{RFalgo:TransformationOfSolutions} on solution $\fett{y}$ for $\ell = 4$. 
	After $j-1>0$ steps, the algorithm terminates and provides a solution $\fett{y^j}\in \mathbb{Z}^ {n-1}_{\geq 0}$ with $y^j_{n-2}=\ell$ or $y^j_i\in \{0,1\}$ for $i\in \{0,\ldots,n-3\}$ and $y^j_{n-2}\in \{0,1,\ldots,\ell-1\}$ such that the following holds	
	$$\sum_{i=0}^{n-2} y^j_i = \sum_{i=0}^{n-2} y_i - (j-1) = n-j+1.$$
	If $y^j_{n-2}= \ell = 4$, we obtain analogous to Case 1
	\begin{align*}
		2^n-1 = \sum_{i=0}^{n-2} 2^i y_i = \sum_{i=0}^{n-2} 2^i y^j_i \geq 2^{n-2} y^j_{n-2} = 2^{n-2}\cdot 4 = 2^n > 2^n -1,
	\end{align*}
	which is a contradiction.
	Otherwise, solution $\fett{y^j}$ consists of $y^j_i\in \{0,1\}$ for $i\in \{0,\ldots,n-3\}$ and $y^j_{n-2}\in \{0,1,2,3\}$. 
	Analogues to Case 2, we obtain 
	\begin{align*}
		2^n-1
		=\sum_{i=0}^{n-2} 2^i y_i
		=\sum_{i=0}^{n-2} 2^i y^j_i
		< \sum_{i=0}^{n-2} 2^i y^\prime_i
		= 2^{n}-1,
	\end{align*}
	which is a contradiction.
	Overall, we have proven $\sum_{i=0}^{n-2}y_i=n-1$.
\end{proof}

\begin{lemma}\label{RFlem:LetzteStelleEins}
	Let non-negative variables $y_0,\ldots,y_{n-2}\in \{0,\ldots,n\}$ be given with $\sum_{i=0}^{n-2}y_i = n-1$. 
	The equation 
	\begin{align}\label{RFequ:LemmaMitY}
		2^{n-1}\sum_{i=0}^{n-2} y_i - \sum_{i=0}^{n-2} 2^i y_i= 2^{n-1}n-2^n+1 \tag{E3}
	\end{align}
	has $ y_i=1$ for all $i\in \{0,\ldots,n-2\}$ as its only solution.
\end{lemma}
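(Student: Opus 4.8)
\textbf{Proof plan for Lemma~\ref{RFlem:LetzteStelleEins}.}
The plan is to combine the substitution $\sum_{i=0}^{n-2}y_i=n-1$ with the structure of equation~\eqref{RFequ:LemmaMitY} to reduce it to an equation purely about the $2^i y_i$ terms, and then exploit uniqueness of binary-type representations. First I would substitute the hypothesis $\sum_{i=0}^{n-2}y_i=n-1$ into the left-hand side of~\eqref{RFequ:LemmaMitY}, obtaining
\begin{align*}
	2^{n-1}(n-1) - \sum_{i=0}^{n-2} 2^i y_i = 2^{n-1}n - 2^n + 1,
\end{align*}
which rearranges to the equivalent condition
\begin{align*}
	\sum_{i=0}^{n-2} 2^i y_i = 2^{n-1} - 1 = \sum_{i=0}^{n-2} 2^i,
\end{align*}
i.e. $\sum_{i=0}^{n-2} 2^i (y_i - 1) = 0$. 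So the task becomes: show that the only non-negative integer vector $(y_0,\ldots,y_{n-2})$ with $\sum y_i = n-1$ and $\sum_{i=0}^{n-2} 2^i y_i = 2^{n-1}-1$ is the all-ones vector.

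The key step is to prove that $\sum_{i=0}^{n-2} 2^i y_i = 2^{n-1}-1$ together with the sum constraint $\sum_{i=0}^{n-2} y_i = n-1$ forces $y_i = 1$ for all $i$. I would argue as follows: the all-ones vector $\fett{y}'=(1,\ldots,1)$ satisfies $\sum_{i=0}^{n-2} 2^i y'_i = 2^{n-1}-1$ and $\sum y'_i = n-1$. Suppose $\fett{y}\neq \fett{y}'$ also satisfies both. Since both have the same coordinate sum $n-1$, the vector $\fett{y}$ must have some coordinate strictly below $1$ (hence $=0$) and some coordinate strictly above $1$ (hence $\geq 2$). Let $k$ be minimal with $y_k\geq 2$. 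As in the proof of Lemma~\ref{RFlem:erstesHilfslemma}, I would apply the transformation of Algorithm~\ref{RFalgo:TransformationOfSolutions} (with a suitable choice of the parameter $\ell$) to repeatedly replace a pair $(y_k, y_{k+1})$ by $(y_k-2, y_{k+1}+1)$; each such step preserves $\sum_{i=0}^{n-2} 2^i y_i$ but strictly decreases the coordinate sum $\sum_{i=0}^{n-2} y_i$ by one. Since $\fett{y}'$ is the \emph{unique} $\{0,1\}$-representation of $2^{n-1}-1$ with the last coordinate in $\{0,1\}$, iterating the transformation either drives the last coordinate up to the breaking threshold $\ell$ (giving $\sum 2^i y_i \geq 2^{n-2}\ell > 2^{n-1}-1$ for $\ell=4$, a contradiction) or yields a $\{0,1\}$-vector whose coordinate sum is strictly less than $n-1$ (since at least one transformation step occurred), contradicting the fact that the $\{0,1\}$-representation of $2^{n-1}-1$ has coordinate sum exactly $n-1$. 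Finally, by Lemma~\ref{RFlem:Aequivalenz} (statements (S1)–(S3) are equivalent) this establishes Theorem~\ref{RFtheorem:Potenzen}.

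The main obstacle I expect is handling the bookkeeping of the transformation carefully: one must verify that the carry operation $(y_k,y_{k+1})\mapsto(y_k-2,y_{k+1}+1)$ never pushes an intermediate coordinate out of the admissible range $\{0,\ldots,n\}$ before termination, and that the algorithm's termination condition (either $y^j_{n-2}=\ell$ or all low coordinates in $\{0,1\}$) is reached in finitely many steps — exactly the invariant already exploited in the proof of Lemma~\ref{RFlem:erstesHilfslemma}. The cleanest route is probably to invoke that the coordinate sum strictly decreases at every step and is bounded below, so termination is automatic, and then do the two-case contradiction (last coordinate hits $\ell$ versus a proper $\{0,1\}$-vector of too-small weight) verbatim as in Lemma~\ref{RFlem:erstesHilfslemma}. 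A minor subtlety worth stating explicitly is that once we know $\sum_{i=0}^{n-2} 2^i y_i = 2^{n-1}-1$, uniqueness of the binary representation of an integer already pins down $(y_0,\ldots,y_{n-3})\in\{0,1\}^{n-2}$ and $y_{n-2}$ modulo the remaining freedom; combining this with $\sum y_i = n-1$ closes the argument without even needing the full force of the algorithm, but presenting it through the algorithm keeps the appendix self-contained and parallel to the preceding lemma.
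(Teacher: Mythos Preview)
Your proposal is correct and follows essentially the same route as the paper: reduce \eqref{RFequ:LemmaMitY} to $\sum_{i=0}^{n-2} 2^i y_i = 2^{n-1}-1$, assume a solution $\fett{y}\neq(1,\ldots,1)$, invoke pigeonhole to find a coordinate $\geq 2$, and run Algorithm~\ref{RFalgo:TransformationOfSolutions} to reach a contradiction in two cases. The one discrepancy is the parameter choice: the paper applies the algorithm with $\ell=2$ here (not $\ell=4$, which was the value used in Lemma~\ref{RFlem:erstesHilfslemma} for the equation $\sum 2^i y_i = 2^n-1$). With $\ell=2$, the break case gives $\sum 2^i y^j_i \geq 2^{n-2}\cdot 2 = 2^{n-1} > 2^{n-1}-1$ directly, and the non-break terminal case yields a genuine $\{0,1\}$-vector, so your ``$\{0,1\}$-vector of too-small weight'' contradiction goes through verbatim. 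With your stated $\ell=4$, the non-break case allows $y^j_{n-2}\in\{2,3\}$, which is not a $\{0,1\}$-vector; you would then need an extra line to rule these out (easy, since $2^{n-2}\cdot 2 > 2^{n-1}-1$), but choosing $\ell=2$ avoids the detour.
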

\begin{proof}
	As $\sum_{i=0}^{n-2}y_i = n-1 $ holds, the equation~\eqref{RFequ:LemmaMitY} reduces to the following equivalent equation
	\begin{align}
		2^{n-1}(n-1) - \sum_{i=0}^{n-2} 2^i y_i= 2^{n-1}n-2^n+1 
		\Leftrightarrow
		\sum_{i=0}^{n-2} 2^i y_i= 2^{n-1}-1.\label{RFequ:HilfslemmaAequivalenz}
	\end{align}
	Assume now the statement is false, i.e., there exists a solution $\fett{y^1} = (y^1_0, \ldots, y^1_{n-2}) \in \{0,\ldots,n\}^{n-1}$ to equation~\eqref{RFequ:HilfslemmaAequivalenz} with $\sum_{i=0}^{n-2}y^1_i = n-1$ where $\fett{y^1}  \neq \fett{y^*}:=(1,\ldots,1)$ holds.
	There exists a $k\in \{0,\ldots,n-2\}$ such that $y^1_k\geq 2$ holds by the pigeonhole principle.
	Let $k$ be minimal.
	If $k=n-2$, we obtain 
	\begin{align}\label{RFequ:AppendixExprContradiction}
		2^{n-1}-1 =	\sum_{i=0}^{n-2} 2^i y^1_i \geq  2^{n-2}y^1_{n-2} \geq 2^{n-2}\cdot 2 = 2^{n-1} > 2^{n-1}-1, 
	\end{align}
	which is a contradiction.	
	If $0\leq k<n-2$, we apply Algorithm~\ref{RFalgo:TransformationOfSolutions} on solution $\fett{y^1}$ for $\ell=2$. 
	After $j-1>0$ steps, the algorithm terminates and provides a solution $\fett{y^j}\in \mathbb{Z}^{n-1}_{\geq 0}$ with $y^j_{n-2}=\ell$ or $y^j_i\in \{0,1\}$ for $i\in \{0,\ldots,n-3\}$ and $y^j_{n-2}\in \{0,1,\ldots,\ell-1\}$ such that the following holds
	$$\sum_{i=0}^{n-2} y^j_i = \sum_{i=0}^{n-2} y^1_i - (j-1) = n-j.$$ 
	If $y^j_{n-2}= \ell = 2$, we obtain a contradiction analogues to expression~\eqref{RFequ:AppendixExprContradiction}.
	Otherwise, solution $\fett{y^j}\in \mathbb{Z}^{n-1}_{\geq 0}$ consists of $y^j_i\in \{0,1\}$ for all $i\in \{0,\ldots,n-2\}$. 
	Furthermore, as at least one step of the algorithm has been performed, it holds $y^j_i=0$ for at least one $i\in \{0,\ldots,n-2\}$.
	We obtain
	\begin{align*}
		2^{n-1}-1 
		= \sum_{i=0}^{n-2} 2^i y^1_i 
		= \sum_{i=0}^{n-2} 2^i y^j_i	
		< \sum_{i=0}^{n-2} 2^i y^*_i
		= 2^{n-1}-1,
	\end{align*}
	which is a contradiction.
\end{proof}
\section{Appendix}
\label{RF:AppendixD}
\begin{lemma}\label{RFlem:SubgraphOfSPGraphsDefinedByTwoVertices}
	Let $G$ be an SP digraph and $v,w\in V(G)$ be two specified vertices such that there exists a $(v,w)$-path. 
	Let $G_{vw}$ be the subgraph spanned by vertices $v,w$. 
	Subgraph $G_{vw}$ is an SP digraph itself with origin $v$ and target $w$. 
\end{lemma}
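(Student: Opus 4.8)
\textbf{Proof plan for Lemma~\ref{RFlem:SubgraphOfSPGraphsDefinedByTwoVertices}.}
The plan is to argue by structural induction on the recursive definition of SP digraphs (Definition~\ref{Def:SeriesParallelGraphs}). The base case is immediate: if $G$ is a single arc $(o,q)$, the only pair $v,w$ admitting a $(v,w)$-path is $v=o$, $w=q$, and $G_{vw}=G$ itself, which is an SP digraph with the correct origin and target. For the inductive step, assume the claim holds for the two strictly smaller SP digraphs $G_1$ (with origin $o_1$, target $q_1$) and $G_2$ (with origin $o_2$, target $q_2$), and let $G$ be obtained from them by either a series or a parallel composition. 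Fix vertices $v,w\in V(G)$ with a $(v,w)$-path; I want to show $G_{vw}$ is an SP digraph with origin $v$ and target $w$.

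First I would handle the \emph{series composition}, where $q_1$ and $o_2$ are identified into a single cut vertex, call it $z$. Every $(v,w)$-path in $G$ either stays entirely within $G_1$, stays entirely within $G_2$, or crosses through $z$. I would split into cases according to where $v$ and $w$ lie. If both $v,w\in V(G_1)$, then since any path leaving $G_1$ must pass through $z$ and $z$ is a cut vertex, no vertex of $G_2\setminus\{z\}$ can lie on both a $(v,x)$- and $(x,w)$-path unless $w=z$; a short argument shows $G_{vw}$ coincides with the subgraph of $G_1$ spanned by $v,w$ (or by $v,q_1$ if $w=z$ is reached only via $G_1$), which is SP by induction. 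Symmetrically for $v,w\in V(G_2)$. The genuinely new case is $v\in V(G_1)$, $w\in V(G_2)$: here every $(v,w)$-path factors through $z$, so $V(G_{vw})$ is exactly the union of the vertices spanned by $v,z$ in $G_1$ and by $z,w$ in $G_2$; by the induction hypothesis these are SP digraphs with targets/origins $z$, and their series composition at $z$ is precisely $G_{vw}$, an SP digraph with origin $v$ and target $w$.

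Next I would treat the \emph{parallel composition}, where $o_1,o_2$ are identified into $o$ and $q_1,q_2$ into $q$. The key observation is that $G$ has no arcs between $V(G_1)\setminus\{o,q\}$ and $V(G_2)\setminus\{o,q\}$, so any $v$--$w$ path lies within one of the "branches" unless it passes through $o$ or $q$. If $v,w$ both lie in (the copy of) $G_1$, the spanned subgraph is the $G_1$-spanned subgraph, SP by induction — with the one subtlety that reaching $w$ might also be possible via the $G_2$ branch, but that can only happen if $\{v,w\}\subseteq\{o,q\}$, i.e. $v=o$, $w=q$, in which case $G_{vw}=G$ and we are done directly. If $v=o$ and $w$ is an internal vertex of $G_1$, then no vertex of $G_2\setminus\{o,q\}$ lies on a $(w,q)$- or $(o,w)$-path simultaneously, so again $G_{vw}$ reduces to a $G_1$-spanned subgraph. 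The remaining possibility, $v$ internal to $G_1$ and $w$ internal to $G_2$, cannot occur because no $(v,w)$-path exists. I would phrase all of this via the clean criterion that reachability in an SP digraph respects the block/2-cut structure, so that "spanned by $v,w$" always lands inside a single recursively-defined piece or equals the whole graph.

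The main obstacle I anticipate is bookkeeping rather than conceptual depth: one must carefully enumerate the placements of $v$ and $w$ relative to the composition cut vertices and, in each case, verify that $V(G_{vw})$ as defined by the reachability condition $\{x : \exists (v,x)\text{- and }(x,w)\text{-path}\}$ coincides with the vertex set of the appropriate recursively-built sub-piece — in particular that no "stray" vertices from the other branch sneak in, and that $v$ really becomes the unique origin and $w$ the unique target of $G_{vw}$. Establishing the no-stray-vertex claim cleanly is where I would spend the most effort; I expect a one-line lemma about 2-cuts in SP digraphs (every internal vertex of a parallel branch is separated from the other branch by $\{o,q\}$, and the series cut vertex separates the two factors) will do the job and let the case analysis collapse to a handful of routine verifications.
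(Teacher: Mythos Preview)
Your proposal is correct and follows essentially the same approach as the paper: structural induction on the SP decomposition, splitting into the series and parallel cases and reducing $G_{vw}$ to spanned subgraphs of the smaller pieces. If anything, you are more careful than the paper, which handles the parallel case in one sentence without explicitly separating out the situation $v=o_G$, $w=q_G$ (where $G_{vw}=G$ and one concludes directly rather than via the induction hypothesis); your explicit treatment of that edge case and of the ``no stray vertices'' verification is a welcome addition.
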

\begin{proof}
	We prove the correctness of the statement by induction on the number of the digraph's arcs $m:=|A(G)|$. 
	For the beginning, if we consider an SP digraph consisting of only one arc, the statement is readily apparent. 
	In the next step, we assume the statement holds true for all SP digraphs consisting of at most $m$ arcs. 	
	In the following, we prove the statement for an SP digraph with $m + 1$ arcs.  	
	We distinguish whether SP digraph $G$ is composed serially or in parallel. 
	
	Firstly, we assume that $G$ is a series composition of SP digraphs $G_1$ and $G_2$. 
	The target of SP digraph $G_1$ and the origin of SP digraph $G_2$ are contracted to one vertex, denoted by $z$.
	Let $v,w\in V(G)$ be two specified vertices such that there exists a $(v,w)$-path. 
	If $v,w\in V(G_1)$ or $v,w\in V(G_2)$ holds, the statement results by the induction hypothesis. 
	Otherwise, it holds $v\in V(G_1)$ and $w\in V(G_2)$ due to the $(v,w)$-path.
	We separately consider SP digraphs $G_1$ and $G_2$ with specified vertices $v,z\in V(G_1)$ and $z,w\in V(G_2)$, respectively.
	Clearly, $(v,z)$- and $(z,w)$-paths exist, as the $(v,w)$-path must contain the connecting vertex $z$. 
	By the induction hypothesis, we obtain that the spanned subgraphs $G_{vz}$ and $G_{zw}$ are SP digraphs themselves. 
	If we serially compose subgraphs $G_{vz}$ and $G_{zw}$ at vertex $z$, we obtain subgraph $G_{vw}$ spanned by vertices $v$, $w$. 
	Subgraph $G_{vw}$ is an SP digraph as it is a series composition of two SP digraphs. 
	
	Secondly, we assume that $G$ is a parallel composition of SP digraphs $G_1$ and $G_2$. 
	Let $v,w\in V(G)$ be two specified vertices such that there exists a $(v,w)$-path. 
	It either holds $v,w\in V(G_1)$ or $v,w\in V(G_2)$ and thus the statement results by the induction hypothesis. 
\end{proof}

\end{document}